\documentclass{article}
\usepackage[letterpaper, portrait, margin=2.5cm]{geometry}

\usepackage{verbatim}
\usepackage[utf8]{inputenc}
\usepackage{amsmath,amssymb,amsthm}
\usepackage{tikz-cd}
\usepackage{bbm}

\usepackage[blocks,affil-it]{authblk}

\usepackage{comment}
\usepackage{stmaryrd}
\usepackage[new]{old-arrows}
\usepackage{graphicx}
\graphicspath{ {./Images/} }
\usepackage{todonotes}

\usepackage{algorithm}
\usepackage{algpseudocode}

\usepackage{mathtools}

\usepackage [english]{babel}

\newcommand{\catname}[1]{{\normalfont\textbf{#1}}}
\newcommand{\Set}{\catname{Set}}

\newcommand{\AAA}{\mathbb A}
\newcommand{\CC}{\mathbb C}
\newcommand{\PP}{\mathbb P}

\newcommand{\VV}{\mathbb V}
\newcommand{\ZZ}{\mathbb Z}

\newcommand{\mcc}{\mathcal C}
\newcommand{\mct}{\mathcal T}

\newcommand{\mfg}{\mathfrak G}

\newcommand{\lbm}{\left[ \begin{matrix}}
\newcommand{\rem}{\end{matrix} \right]}
\newcommand{\lp}{\left(}
\newcommand{\rp}{\right)}
\newcommand{\lb}{\left\{}
\newcommand{\rb}{\right\}}
\newcommand{\lav}{\left|}
\newcommand{\rav}{\right|}

\newcommand{\SL}{\sum\limits}


\newcommand{\ra}{\rightarrow}

\newcommand{\injects}{\hookrightarrow}

\newcommand{\inv}{^{-1}}
\newcommand{\st}{ \ | \ }
 
\newcommand{\p}{\partial}

\DeclareMathOperator{\Hom}{Hom}





\DeclareMathOperator{\RD}{RD}
\DeclareMathOperator{\ed}{ed}

\newcommand{\In}{\textbf{in}}
\newcommand{\Range}{\textbf{range}}
\newcommand{\Len}{\textbf{len}}

\algrenewcommand\algorithmicdo{}

\newcommand{\red}{\text{red}}
\newcommand{\Syl}{\text{Syl}}

\theoremstyle{definition}
\newtheorem{definition}{Definition}[section]
\newtheorem{remark}[definition]{Remark}

\newtheorem{prop}[definition]{Proposition}
\newtheorem{lemma}[definition]{Lemma}
\newtheorem{theorem}[definition]{Theorem}
\newtheorem{corollary}[definition]{Corollary}

\newtheorem{algorithmname}[definition]{Algorithm}
\newtheorem{question}[definition]{Question}

\title{Upper Bounds on Resolvent Degree via\\ Sylvester's Obliteration Algorithm}
\author{Curtis Heberle and Alexander J. Sutherland\footnote{The second author was supported in part by the National Science Foundation under Grant No. DMS-1944862.}}
\date{}

\begin{document}

\maketitle

\begin{abstract}
For each $n$, let $\RD(n)$ denote the minimum $d$ for which there exists a formula for the general polynomial of degree $n$ in algebraic functions of at most $d$ variables. In this paper, we recover an algorithm of Sylvester for determining non-zero solutions of systems of homogeneous polynomials, which we present from a modern algebro-geometric perspective. We then use this geometric algorithm to determine improved thresholds for upper bounds on $\RD(n)$.
\end{abstract}

\tableofcontents

\section{Introduction}\label{sec:Introduction}
A classical problem in mathematics is to determine the roots of a general degree $n$ polynomial in one variable in terms of its coefficients. Modern work on this problem centers around resolvent degree, an invariant whose ideas permeate classical work, but was not formally defined until the independent definitions of Brauer \cite[p.46]{Brauer1975} and Arnol'd and Shimura \cite[p.46]{ArnoldShimura1976}. Farb and Wolfson greatly expanded the context of resolvent degree in \cite[Definition 2.3, Proposition 2.4, Definition 3.1]{FarbWolfson2019}. 

Following \cite[Example 4.2]{Wolfson2021}, we denote the resolvent degree of the general degree $n$ polynomial by $\RD(n)$. Currently, non-trivial lower bounds on $\RD(n)$ are unknown \cite[Section 1.5]{FarbWolfson2019}; it is possible that $\RD(n)=1$ for all $n$. Nonetheless, Dixmier \cite{Dixmier1993} noted that ``Every reduction of $\RD(n)$ would be serious progress,'' and Wolfson provided new upper bounds on $\RD(n)$ \cite[Theorems 5.6 and 5.8]{Wolfson2021} by constructing a ``bounding function'' $F(m)$ such that $\RD(n) \leq n-m$ for $n \geq F(m)$. The current best upper bounds on $\RD(n)$ are given by \cite[Theorem 3.27]{Sutherland2021C}, where the second-named author constructs an improved bounding function $G(m)$ and shows that $\lim\limits_{m \ra \infty} \frac{F(m)}{G(m)} = \infty$. 

In this paper, we recover an algorithm from \cite{Sylvester1887} (henceforth referred to as the ``obliteration algorithm'') for solving systems of equations using polynomials of minimal degree. An additional modern description of the Sylvester's work and its relevance to resolvent degree is given in \cite{Heberle2021}. Here we present the algorithm primarily from an algebro-geometric viewpoint using the language of ``polar cones'' introduced in \cite[Section 2]{Sutherland2021C}. We then use the obliteration algorithm to determine the following new upper bounds on resolvent degree:

\begin{theorem}\label{thm:Upper Bounds on Resolvent Degree} \textbf{(Upper Bounds on Resolvent Degree)}
\begin{enumerate}
	\item For $n \geq 5,250,199$, $\RD(n) \leq n-13$.
	\item For each $14 \leq m \leq 17$ and $n > \frac{(m-1)!}{120}$, $\RD(n) \leq n-m$.
	\item For $n \geq 381,918,437,071,508,901$, $\RD(n) \leq n-22$. 
	\item For each $23 \leq m \leq 25$ and $n > \frac{(m-1)!}{720}$, $\RD(n) \leq n-m$. 
\end{enumerate}

The above result is found as Theorem \ref{thm:Bounds from the Geometric Obliteration Algorithm} in Section \ref{sec:Upper Bounds on Resolvent Degree} and leads to the construction of a new bounding function $G'(m)$ such that $\RD(n) \leq n-m$ for $n \geq G'(m)$ and $G'(m) \leq G(m)$ in Corollary \ref{cor:The New Bounding Function}.

\end{theorem}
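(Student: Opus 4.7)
The plan is to translate the problem of bounding $\RD(n)$ into a question about rational sections of families of polar-cone intersections, and then to feed those families into Sylvester's obliteration algorithm (in its geometric formulation) to produce such sections whenever $n$ is large enough. Concretely, by the framework of \cite{Sutherland2021C}, if one can construct, via algebraic functions of at most $n-m$ variables, a point of the intersection of polar cones $\Sigma(n;2,3,\ldots,m)$ over the generic degree-$n$ polynomial, then $\RD(n) \leq n-m$. So the first step is to state a precise version of this reduction and identify exactly which intersections must be solved to hit the target codimensions $m = 13, 14, \ldots, 17$ and $m = 22, 23, \ldots, 25$.

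Second, I would recast the obliteration algorithm in the polar-cone language developed in the body of the paper: given a system of forms $f_1, \ldots, f_k$ of degrees $d_1 \leq \cdots \leq d_k$ on $\PP^{N-1}$, parameterize $V(f_{1})$ by a rational map from a projective space of one fewer dimension, pull back the remaining $f_i$, and recurse on the resulting lower-dimensional system of degrees $d_2, \ldots, d_k$. The algorithm succeeds (i.e., outputs a rational point) once the ambient dimension exceeds the sum of the monomial counts of the degrees still to be killed; this is precisely Sylvester's threshold condition. The key technical input is to verify that, when applied to the polar-cone system relevant to $\RD(n) \leq n-m$, each pullback stays inside the class of forms for which the algorithm is guaranteed to continue, so that the recursion terminates in a point defined over a field of the correct transcendence degree.

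Third, I would do the numerical bookkeeping. The thresholds $(m-1)!/120$ and $(m-1)!/720$ arise as the dominant term in the sum of binomial coefficients $\binom{n+d-1}{d}$ across the degrees $d=1,\ldots,m$ remaining after the first several obliteration steps, where the factors $120 = 5!$ and $720 = 6!$ correspond to starting the recursion after the degree-$5$ and degree-$6$ forms have been handled respectively. Plugging $m=13$ into the resulting polynomial inequality in $n$ and solving gives the explicit bound $n \geq 5{,}250{,}199$ for part (1); for $14 \leq m \leq 17$ the lower-order terms drop away and the inequality simplifies to $n > (m-1)!/120$, giving part (2). Parts (3) and (4) are obtained analogously, with the factorial denominator shifted to $720$ because the relevant recursion skips one additional degree before stabilizing.

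The main obstacle will be the careful accounting in step two: one must show that the pulled-back forms at each stage of the obliteration actually live in the polar-cone families controlled by the recursion, so that the algorithm's dimension-count inequality is the correct one to iterate. A subtler point is that the algorithm produces only a rational point of the intersection, whereas the resolvent-degree reduction demands a rational section of the whole family over the coefficient base; verifying that Sylvester's construction is compatible with this family structure (i.e., that it spreads out over the base without raising the transcendence degree) is where the geometric reformulation from \cite{Sutherland2021C} will do the real work. Once that compatibility is in place, the explicit thresholds follow from the inequalities above by direct substitution.
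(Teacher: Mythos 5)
Your high-level framework is right — the reduction of $\RD(n)$ to determining points on partial Tschirnhaus complete intersections, and the choice of $5! = 120$ and $6! = 720$ as corresponding to handling the degree-$\leq 5$ (resp.\ $\leq 6$) forms geometrically before solving a high-degree residual polynomial. But your account of the obliteration mechanism and of the numerical thresholds does not match how the argument actually works, and the gaps are not cosmetic.

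First, the obliteration algorithm does not ``parameterize $V(f_1)$ by a rational map from a projective space of one fewer dimension and pull back the remaining $f_i$.'' A general hypersurface is not rational, and no such parameterization is invoked. Instead, the algorithm removes one top-degree hypersurface $V_d$ from $V = V^\red \cap V_d$, recursively finds a point $P$ on $V^\red$, passes to the polar cone $\mcc(V^\red;P)$ (which is again an intersection of hypersurfaces, of strictly smaller top degree), recursively finds a second point $Q$ on it, and thereby obtains a line $\Lambda(P,Q) \subseteq V^\red$; intersecting with $V_d$ then reduces to a single degree-$d$ equation on that line. Iterating this is the Reduction Lemma / Obliteration Proposition of Section~3, and the quantity controlled is the recursively defined geometric dimension bound $g(d;\ell_d,\ldots,\ell_1)$, not a ``sum of monomial counts.'' Indeed that last phrase is essentially the existential threshold for a $k$-plane (the Debarre--Manivel count), which is vastly smaller than Sylvester's threshold; conflating the two would give bounds that are far too optimistic and cannot be achieved over an extension of bounded resolvent degree.

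Second, the explicit numbers $5{,}250{,}198$ and $381{,}918{,}437{,}071{,}508{,}900$ are \emph{not} obtained by solving a polynomial inequality in $n$. They are the values $\Xi(13,5)$ and $\Xi(22,6)$ of an optimal reduction bound defined via iterated Sylvester reductions of the $(m-d-1)^{\mathrm{st}}$ polar cone of $\tau_{1,\ldots,d}$, optimized over how many quadrics to obliterate before solving a quadric of degree $2^{\lambda_2 - j}$; these values are computed algorithmically (Section~5). Likewise, $(m-1)!/120$ and $(m-1)!/720$ are not ``dominant terms in a sum of binomial coefficients'': once an $(m-d-1)$-plane $\Lambda \subseteq \tau_{1,\ldots,d}^\circ$ has been constructed, the residual problem is to find a point of $\Lambda \cap \tau_{d+1} \cap \cdots \cap \tau_{m-1}$, whose degree is $\prod_{j=d+1}^{m-1} j = (m-1)!/d!$. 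The statement for $m$ in the ranges $[14,17]$ and $[23,25]$ holds because in those ranges this factorial degree exceeds $\Xi(m,d)$, and the bound $G'(m)$ is defined as the maximum of the two. Filling in your step three requires proving all of this — in particular, exhibiting the recursion for $\Xi(m,d)$ and establishing that $g(V) \geq r(V)$ (Proposition~3.16) — rather than extracting it from a closed-form inequality.
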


\paragraph{Historical Remarks} The second-named author uses two distinct methods to construct $G(m)$ \cite[Theorems 3.7, 3.10, 3.24]{Sutherland2021C}. For general $m$ (Theorem 3.24), the second-named author uses a result of Debarre and Manivel \cite[Theorem 2.1]{DebarreManivel1998} to improve on the construction of Wolfson which underlies \cite[Theorem 5.6]{Wolfson2021}. For small $m$ (Theorems 3.7 and 3.10), the second-named author uses iterated polar cone methods which build upon the methods of \cite{Wiman1927}, \cite{Chebotarev1954}, and \cite{Segre1945} (note, however, that Wiman and Chebotarev do not use the language of polars at all and Segre refers only to individual polars). An application of Sylvester's obliteration algorithm to certain small $m$ cases is considered in \cite{Heberle2021}. By combining Sylvester's obliteration algorithm with the other methods described above, the authors believe they have exhausted the classical methods related to the theory of Tschirnhaus transformations; implications of this are discussed in Subsection \ref{subsec:Remaining Questions}.

\paragraph{Outline of the Paper}
In Section 2, we recall the relevant background on resolvent degree, polar cones, and Tschirnhaus transformations. In Section 3, we present a modern, geometric version of the obliteration algorithm and related phemonena, as well as a summary of Sylvester's original work. In Section 4, we apply the geometric obliteration algorithm to obtain upper bounds on resolvent degree. In Section 5, we discuss Python implementations of the geometric obliteration algorithm used for computations relevant for Theorem \ref{thm:Bounds from the Geometric Obliteration Algorithm}.

\paragraph{Conventions}
\begin{enumerate}
	\item We restrict to fields $K$ which are finitely generated $\CC$-algebras. One could instead fix an arbitrary algebraically closed field $F$ of characteristic zero (in lieu of $\CC$) and the statements (relative to $F$) would hold.
	\item We follow the conventions of \cite{Harris2010} for algebraic varieties. In particular, a projective (respectively, affine) variety is defined to be a closed algebraic set in $\PP_K^r$ (respectively, $\AAA_K^r$). When we say variety without a specific modifier, we mean a quasi-projective variety. Note that we do not assume that varieties are irreducible.
	\item Given $a,b \in \ZZ_{\geq 0}$, we set $[a,b] = \lb x \in \ZZ \st a \leq x \leq b \rb$.
	\item Given a collection of homogeneous polynomials $S = \lb f_1,\dotsc,f_s \rb \subseteq K[x_0,\dotsc,x_r]$, we write $\VV(f_1,\dotsc,f_s)$ (and occasionally $\VV(S)$) for the subvariety of $\PP_K^r$ determined by the conditions $f_1 = \cdots = f_s = 0$.
	\item Given a subvariety $V \subseteq \PP_K^r$, we write $V(K)$ for the set of $K$-rational points of $V$.
	\item Given points $P_0,\dotsc,P_\ell \in \PP^r(K)$, we write $\Lambda(P_0,\dotsc,P_\ell)$ for the linear subvariety of $\PP_K^r$ that they determine. Additionally, we refer to a linear subvariety $\Lambda \subseteq \PP_k^r$ of dimension $k \geq 3$ as a $k$-plane. We refer to linear subvarieties of dimension 1 (respectively, 2) as lines (respectively, planes).
	\item We use the notation $K_n$ to mean $\CC(a_1,\dotsc,a_n)$, a purely transcendental extension of $\CC$ with transcendence basis $a_1,\dotsc,a_n$.  
\end{enumerate}

Note that for generic choices of $f_1,\dotsc,f_s$, $\VV(f_1,\dotsc,f_s)$ is a complete intersection. However, there are examples of such choices which are not complete intersections, such as the twisted cubic curve. Following the convention of \cite{Sutherland2021C}, we refer to a subvariety $\VV(f_1,\dotsc,f_s)$ as an intersection of hypersurfaces.

Consider a system of equations $S$ where each polynomial has degree at most $d$ and where we denote the number of polynomials of degree $j$ by $\ell_j$. In such a case, we say that $S$ is of type $\lbm d &\cdots &1\\ \ell_d &\cdots &\ell_1 \rem$. If $\ell_j=0$ for any $j \in [1,d-1]$, the corresponding column may be omitted from the presentation. When $d \geq 2$ and each $\ell_j=1$, we say $S$ is of type $(1,\dotsc,d)$.

When $V = \VV\lp f_1,\dotsc,f_s \rp$, we say that the type of $V$ is the type of the system $\lb f_1,\dotsc,f_s \rb$. We note that the type of $V$ explicitly depends on the presentation in terms of $f_1,\dotsc,f_s$; it is not unique. However, we only consider the type of an intersection of hypersurfaces when it is defined by an explicit set of polynomials.

\paragraph{Acknowledgements} The authors thank David Ishii Smyth and Jesse Wolfson for their support. The second author thanks Joshua Jordan for helpful conversations. Additionally, the authors thank the anonymous referee for many helpful comments and suggestions. 

\section{Resolvent Degree, Polar Cones, and Tschirnhaus Transformations}\label{sec:Resolvent Degree and Polar Cones}

\subsection{Resolvent Degree}\label{subsec:Resolvent Degree}

We refer the reader to \cite{FarbWolfson2019} for general definitions of resolvent degree (Definitions 1.3, 2.3), a summary of its history (Section 1), and additional context. We only work over $\CC$ and thus provide definitions in this context.  

\begin{definition}\label{def:Resolvent Degree for Fields} \textbf{(Resolvent Degree of Field Extensions)}\\
Let $K'/K$ be an extension of $\CC$-fields. The \textbf{resolvent degree} of $K'/K$, denoted $\RD\lp L/K \rp$, is the minimal $d$ for which there exists a tower of finite extensions
\begin{equation*}
	K = E_0 \injects E_1 \injects \cdots \injects E_\ell
\end{equation*}

\noindent
such that $K'$ embeds into $E_\ell$ over $K$ and the essential dimension of each $E_{j+1}/E_j$ is at most $d$. 
\end{definition}

\begin{definition}\label{def:Resolvent Degree for Maps} \textbf{(Resolvent Degree of Generically Finite, Dominant Maps)}\\
Let $Y \dashrightarrow X$ be a generically finite, dominant rational map of $\CC$-varieties. The \textbf{resolvent degree} of $Y \dashrightarrow X$, denoted $RD\lp Y \dashrightarrow X \rp$, is the minimal $d$ for which there exists a tower of generically finite, dominant rational maps
\begin{equation*}
	E_\ell \dashrightarrow \cdots \dashrightarrow E_1 \dashrightarrow E_0 = X
\end{equation*}

\noindent
such that $E_\ell \dashrightarrow X$ factors as $E_\ell \dashrightarrow Y \dashrightarrow X$ and the essential dimension of each $E_{j+1} \dashrightarrow E_j$ is at most $d$.
\end{definition}

We first note that Definitions \ref{def:Resolvent Degree for Fields} and \ref{def:Resolvent Degree for Maps} agree and is induced by sending an irreducible affine variety $X$ to the corresponding field of rational functions $\CC(X)$. We refer the reader to \cite[Definition 1.3]{FarbWolfson2019} for a precise definition of essential dimension, but note that we often use approximate essential dimension via the bounds $\ed\lp K'/K \rp \leq \text{tr.deg}\lp K \rp$ and $\ed\lp Y \dashrightarrow X \rp \leq \dim(X)$.

We write $\RD(n)$ for the resolvent degree of the general degree $n$ polynomial, which is given precisely as
\begin{align*}
	\RD(n) &= \RD\lp \CC^n \dashrightarrow \CC^n/S_n \rp,\\
	&= \RD\lp \CC\lp a_1,\dotsc,a_n \rp[z]/\lp z^n + a_1z^{n-1} + \cdots a_{n-1}z + a_n \rp / \CC\lp a_1,\dotsc,a_n \rp \rp.
\end{align*}

\noindent
Additionally, resolvent degree is defined for finite groups \cite[Definition 3.1]{FarbWolfson2019} and $\RD(n) = \RD\lp S_n \rp = \RD\lp A_n \rp$ \cite[Theorem 3.3, Corollary 3.17]{FarbWolfson2019}.

While we restrict ourselves to working over $\CC$, we do note lose any generality. Theorem 1.2 of \cite{Reichstein2022} yields that $\RD_\CC\lp S_n \rp = \RD_K\lp S_n \rp$ for any field $K$ of characteristic zero and \cite[Theorem 1.3]{Reichstein2022} yields that $\RD_\CC\lp S_n \rp \geq \RD_K\lp S_n \rp$ for any field $K$, i.e. resolvent degree can only go down in positive characteristic.

In Lemma \ref{lem:Properties of Resolvent Degree}, we summarize several basic results which will be used frequently (and often without explicit reference). Item 1 is the field-theoretic version of \cite[Lemma 2.7]{FarbWolfson2019} and follows immediately from the definition of resolvent degree. Items 2 and 3 are algebraic versions of \cite[Lemma 2.9]{FarbWolfson2019} and can be found explicitly as follows as \cite[Lemma 2.18, Proposition 2.19]{Sutherland2021C}. Note that items 2 and 3 follow directly from the primitive element theorem.

\begin{lemma}\label{lem:Properties of Resolvent Degree} \textbf{(Properties of Resolvent Degree)}
\begin{enumerate}
	\item Let $E_0 \injects E_1 \injects \cdots \injects E_\ell$ be a tower of field extensions. Then,
	\begin{align*}
		\RD(E_\ell/E_0) = \max\lb \RD(E_j/E_{j-1}) \st j \in [1,\ell] \rb.
	\end{align*}
	\item Let $K'/K$ be a degree $d$ field extension. Then, $\RD(K'/K) \leq \RD(d)$.
	\item Let $V \subseteq \PP_K^r$ be a degree $d$ subvariety. Then, there is an extension $K'/K$ with $\RD(K'/K) \leq \RD(d)$ over which we can determine a $K'$-rational point of $V$.
\end{enumerate}
\end{lemma}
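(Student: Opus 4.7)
The plan is to treat the three items in order, as item 3 will invoke item 2 and both items 2 and 3 rely on fairly direct manipulation of field-theoretic definitions, while item 1 is the structural statement from which everything else flows.

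For item 1, I would argue the two inequalities separately, working directly from Definition \ref{def:Resolvent Degree for Fields}. For the upper bound, given towers
\begin{equation*}
    E_{j-1} = F_0^{(j)} \injects F_1^{(j)} \injects \cdots \injects F_{m_j}^{(j)}
\end{equation*}
witnessing $\RD(E_j/E_{j-1})$, so that $E_j$ embeds in $F_{m_j}^{(j)}$ with each layer of essential dimension at most $\RD(E_j/E_{j-1})$, I would concatenate these towers. Specifically, after replacing $E_j$ by its image in $F_{m_j}^{(j)}$ and starting the next tower from there, the result is a tower from $E_0$ containing $E_\ell$ whose maximal layer-wise essential dimension is $\max_j \RD(E_j/E_{j-1})$, giving the desired bound. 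For the lower bound, I would take a single tower $E_0 = F_0 \injects \cdots \injects F_m$ witnessing $\RD(E_\ell/E_0)$ and, for each fixed $j$, base change to $E_{j-1}$, producing a tower $E_{j-1} \injects E_{j-1} \cdot F_1 \injects \cdots \injects E_{j-1} \cdot F_m$ into which $E_j$ embeds. The key input is that essential dimension does not increase under base change of the bottom field, so each layer has essential dimension at most $\RD(E_\ell/E_0)$, yielding $\RD(E_j/E_{j-1}) \leq \RD(E_\ell/E_0)$.

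For item 2, I would apply the primitive element theorem to write $K' = K(\alpha)$, where $\alpha$ satisfies a monic polynomial $p(z) \in K[z]$ of degree $d$. Writing $p(z) = z^d + c_1 z^{d-1} + \cdots + c_d$, the extension $K'/K$ is the pullback of the general degree $d$ extension $K_d[z]/(z^d + a_1 z^{d-1} + \cdots + a_d)$ along the specialization map $a_i \mapsto c_i$. Since resolvent degree is non-increasing under specialization (a direct consequence of the definition and the corresponding property of essential dimension), we obtain $\RD(K'/K) \leq \RD(d)$.

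For item 3, I would reduce to item 2 by geometric dimension-counting: intersecting $V \subseteq \PP_K^r$ with a generic linear subvariety $\Lambda \subseteq \PP_K^r$ of complementary dimension $r - \dim V$ defined over $K$ yields a zero-dimensional subvariety $V \cap \Lambda$ of degree $d$. Any closed point $P$ of $V \cap \Lambda$ has residue field $K'$ of degree at most $d$ over $K$, and by item 2, $\RD(K'/K) \leq \RD(d)$ (using that $\RD$ is non-decreasing in the degree, which is immediate since a factor of a degree $d$ polynomial is solved once the polynomial itself is solved). Over $K'$, the point $P$ is by construction $K'$-rational on $V$.

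The main obstacle is the base-change compatibility of essential dimension invoked in item 1; once this (standard but not entirely trivial) fact is granted, the rest of the argument is bookkeeping with towers and the primitive element theorem. Fortunately, this is precisely the content of \cite[Lemma 2.18, Proposition 2.19]{Sutherland2021C} cited in the surrounding text, so the argument amounts to assembling these ingredients.
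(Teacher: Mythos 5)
Your proposal is correct and matches the approach the paper intends: the paper provides no detailed proof, instead citing Lemmas 2.7 and 2.9 of Farb--Wolfson and Lemma 2.18, Proposition 2.19 of the second author's earlier paper, and remarking that items 2 and 3 ``follow directly from the primitive element theorem.'' Your fill-in --- concatenation and base change of towers (using the base-change property of essential dimension) for item 1, the primitive element theorem together with non-increase of resolvent degree under specialization from the universal degree-$d$ cover for item 2, and a generic linear section of complementary dimension reducing item 3 to item 2 --- is precisely the standard unwinding of those references.
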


\noindent
As a consequence of item 3, we say that we can determine a point of a degree $d$ subvariety $V$ by solving a degree $d$ polynomial.

\subsection{Polar Cones and $k$-Polar Points}\label{subsec:Polar Cones and k-Polar Points}

The original theory of polars for hypersurfaces is classical and a classical reference is \cite{Bertini1923}; a modern reference on polars is \cite{Dolgachev2012}. We now recall the key definitions and results of \cite[Section 2]{Sutherland2021C}; we use the same notation and begin with the definition of polars. 

\begin{definition}\label{def:Polars and Polar Cones} \textbf{(Polars and Polar Cones)} \\
Let $f \in K[x_0,\dotsc,x_r]$ be a homogeneous polynomial of degree $d$ and $P \in \PP^r(K)$. Observe that the set
\begin{equation*}
	I_j^* := \Hom_\Set\lp [1,j] , [0,r] \rp
\end{equation*}

\noindent
indexes the (ordered) $j^{th}$ partial derivatives of $f$ for each $j \in [0,d]$. We also use the shorthand
\begin{equation*}
	\p_0^{j_0} \cdots \p_\ell^{j_\ell} = \frac{\p^{j_0+\cdots+j_\ell}}{\p x_0^{j_0} \cdots \p x_\ell^{j_\ell}}.
\end{equation*}

\noindent
For each $j \in [0,d]$, the \textbf{$j^{th}$ polar of $f$ at $P$} is the homogeneous polynomial
\begin{equation}\label{eqn:jth polar of a polynomial}
	t(j,f,P) := \SL_{\iota \in I_{d-j}^*} \lp \p_0^{\lav \iota\inv(0) \rav} \cdots \p_r^{\lav \iota\inv(r) \rav} f \rp \biggr\rvert_{P}  x_0^{\lav \iota\inv(0) \rav} \cdots x_r^{\lav \iota\inv(r) \rav},
\end{equation}

\noindent
which is of degree $d-j$. Next, consider the hypersurface $H = \VV(f)$. The \textbf{$j^{th}$ polar of $H$ at $P$} is
\begin{align*}
	T(j,f,P) := \VV(t(j,f,P)) \subseteq \PP_K^r.
\end{align*}

\noindent
Finally, the \textbf{(first) polar cone of $H$ at $P$} is 
\begin{equation*}
	\mcc(H;P) := \bigcap\limits_{j=0}^{d-1} T(j,f,P). 
\end{equation*}
\end{definition}

Note that $T(0,f,P)=H$ for all $P$ and $T(d,f,P) = \PP_K^r$ if $P \in H(K)$. If $H$ is smooth at $P$, then $T(d-1,f,P)$ is the tangent hyperplane of $H$ at $P$. Our interest in polars stems from our interest in polar cones, which are themselves motivated by the following classical result (which is stated as a fact in \cite[I.5, p.292]{Segre1945}; Segre refers readers to \cite[p.203]{Bertini1923}).

\begin{lemma}\label{lem:Bertini's Lemma for Hypersurfaces} \textbf{(Bertini's Lemma for Hypersurfaces)} \\
Let $H \subseteq \PP_K^r$ be a hypersurface and $P \in H(K)$. Then, $\mcc(H;P) \subseteq H$ is a cone with vertex $P$. 
\end{lemma}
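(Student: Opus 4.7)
The plan is to deduce both conclusions ($\mcc(H;P) \subseteq H$ and the cone property) from a single identity: a Taylor-type expansion of $f$ along the line through $P$ and a second point. The definition of $t(j,f,P)$ sums over ordered maps $\iota \in I_{d-j}^\ast$, so grouping ordered tuples by their underlying multi-index $\alpha$ with $|\alpha|=d-j$ yields
\begin{equation*}
    t(j,f,P)(x) = \sum_{|\alpha|=d-j} \binom{d-j}{\alpha}\, (\partial^\alpha f)(P)\, x^\alpha.
\end{equation*}
Since $f$ is homogeneous of degree $d$, this is precisely the collection of coefficients one obtains from the standard Taylor expansion at $P$ together with the homogeneity identity $f(sP+tQ) = s^d f(P+(t/s)Q)$. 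Carrying this out, I would establish the key identity
\begin{equation*}
    f(sP + tQ) \;=\; \sum_{j=0}^{d} \frac{s^{j}\, t^{d-j}}{(d-j)!}\, t(j,f,P)(Q).
\end{equation*}

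For the containment $\mcc(H;P) \subseteq H$, note that $t(d,f,P)$ is the constant $f(P)$, which vanishes because $P \in H(K)$; this kills the $j=d$ term. If $Q \in \mcc(H;P)$, then by definition $t(j,f,P)(Q)=0$ for $j=0,\dotsc,d-1$, killing all remaining terms. Hence $f(sP+tQ)$ vanishes identically as a polynomial in $s,t$, and specializing $(s,t) = (0,1)$ gives $f(Q)=0$, so $Q \in H$. As a bonus, this argument also shows the whole line $\Lambda(P,Q)$ lies in $H$.

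For the cone property, I would apply the same expansion technique to each polar $t(j,f,P)$ in place of $f$. Equivalently, viewing $f$ as arising from a symmetric $d$-linear form $F$ on $K^{r+1}$, the polar $t(j,f,P)(x)$ is (up to a combinatorial constant) $F(P,\dotsc,P,x,\dotsc,x)$ with $j$ copies of $P$ and $d-j$ copies of $x$. Multilinearity in the last $d-j$ slots expands $t(j,f,P)(sP+tQ)$ as a $K$-linear combination of the values $t(i,f,P)(Q)$ for $i = j, j+1, \dotsc, d$. Every one of these vanishes: those with $i \leq d-1$ by $Q \in \mcc(H;P)$, and the $i=d$ term by $P \in H(K)$. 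Hence $sP+tQ \in T(j,f,P)$ for every $j \in [0,d-1]$, so $sP+tQ \in \mcc(H;P)$. Taking $(s,t)=(1,0)$ shows $P \in \mcc(H;P)$, and the full argument shows every line through $P$ and a second point of $\mcc(H;P)$ lies in $\mcc(H;P)$, which is the assertion that $\mcc(H;P)$ is a cone with vertex $P$.

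The one slightly delicate step is keeping the multinomial bookkeeping straight: the coefficients $\binom{d-j}{\alpha}$ and $(d-j)!$ must conspire correctly with the homogeneity of $f$ for the Taylor identity to hold with exactly the polars as coefficients. I would verify this once by a direct computation starting from the definition of $t(j,f,P)$ and the multivariable Taylor formula at $P$, after which both parts of the lemma fall out by the same line-restriction argument.
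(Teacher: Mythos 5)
Your proof is correct, and it is worth noting that the paper itself does not prove this lemma: it is stated as a classical result with references to Segre and Bertini, and the paper later defers the core algebraic computation to Lemma 2.8 of \cite{Sutherland2021C}. Your argument supplies a clean, self-contained proof that reconstructs the classical line-restriction computation. The central identity
\begin{equation*}
f(sP+tQ) \;=\; \sum_{j=0}^{d}\frac{s^{j}t^{d-j}}{(d-j)!}\,t(j,f,P)(Q)
\end{equation*}
is exactly the Taylor expansion of $f$ along the line through $P$ and $Q$ with the polars as coefficients, and your multinomial bookkeeping is the right step: grouping the ordered tuples $\iota\in I_{d-j}^{*}$ by their underlying multi-index $\alpha$ produces the factor $\binom{d-j}{\alpha}$, which combines with the Taylor coefficient $\tfrac{1}{\alpha!}$ and the homogeneity factor $s^{d}$ to give precisely the displayed identity. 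Taking $s=0,t=1$ (or noting $t(d,f,P)=f(P)=0$) gives $\mcc(H;P)\subseteq H$. For the cone property, the polarization formula $t(j,f,P)(x)=\tfrac{d!}{j!}F(P,\dotsc,P,x,\dotsc,x)$ (with $j$ copies of $P$) and multilinearity in the remaining $d-j$ slots express $t(j,f,P)(sP+tQ)$ as a $K$-combination of the values $t(i,f,P)(Q)$ for $i\ge j$, all of which vanish when $Q\in\mcc(H;P)$ and $P\in H(K)$; so the whole line $\Lambda(P,Q)$ lies in every $T(j,f,P)$, hence in $\mcc(H;P)$. One small point you should make explicit in a final write-up: the polarization form and the divisions by factorials require characteristic zero, which is harmless here because the paper's standing convention restricts to fields that are finitely generated $\CC$-algebras.
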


\noindent
In particular, for any point $Q \in \mcc(H;P) \setminus \{P\}$, the line $\Lambda(P,Q)$ lies in $H$.

Observe that for an intersection of hypersurfaces $\VV(f_1,\dotsc,f_s)$, a line $\Lambda$ lies on $\VV(f_1,\dotsc,f_s)$ exactly when $\Lambda$ lies on each hypersurface $\VV(f_j)$. This observation motivates the following definition and lemma, which are originally given as \cite[Definition 2.10, Lemma 2.11]{Sutherland2021C}.

\begin{definition}\label{def:Polar Cone of an Intersection of Hypersurfaces} \textbf{(Polar Cone of an Intersection of Hypersurfaces)} \\
Let $V = \VV(f_1,\dotsc,f_s) \subseteq \PP_K^r$ be an intersection of hypersurfaces and $P \in V(K)$. The \textbf{(first) polar cone of $V$ at $P$} is
\begin{equation*}
	\mcc(V;P) := \bigcap\limits_{j=1}^s \mcc(\VV(f_j);P). 
\end{equation*}
\end{definition}

\begin{lemma}\label{lem:Bertini's Lemma for Intersections of Hypersurfaces}  \textbf{(Bertini's Lemma for Intersections of Hypersurfaces)} \\
Let $V \subseteq \PP_K^r$ be an intersection of hypersurfaces and $P \in V(K)$. Then, $\mcc(V;P) \subseteq V$ is a cone with vertex $P$. 
\end{lemma}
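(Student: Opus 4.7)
The plan is to reduce the statement directly to the hypersurface version (Lemma \ref{lem:Bertini's Lemma for Hypersurfaces}) by unpacking the definition of $\mcc(V;P)$ as an intersection. Since $\mcc(V;P) = \bigcap_{j=1}^s \mcc(\VV(f_j);P)$ by Definition \ref{def:Polar Cone of an Intersection of Hypersurfaces}, and each factor $\mcc(\VV(f_j);P)$ is already known to be a cone with vertex $P$ contained in $\VV(f_j)$, both the containment and cone assertions should follow formally from closure of these two properties under intersection.

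First I would establish containment: applying Lemma \ref{lem:Bertini's Lemma for Hypersurfaces} to each hypersurface $\VV(f_j)$ (which is permissible since $P \in V(K)$ implies $P \in \VV(f_j)(K)$), we get $\mcc(\VV(f_j);P) \subseteq \VV(f_j)$ for every $j \in [1,s]$. Intersecting over $j$ then yields
\begin{equation*}
    \mcc(V;P) \;=\; \bigcap_{j=1}^s \mcc(\VV(f_j);P) \;\subseteq\; \bigcap_{j=1}^s \VV(f_j) \;=\; V.
\end{equation*}

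Next I would verify the cone property. Let $Q \in \mcc(V;P) \setminus \{P\}$; then $Q \in \mcc(\VV(f_j);P)$ for each $j$, and by Lemma \ref{lem:Bertini's Lemma for Hypersurfaces} the line $\Lambda(P,Q)$ lies in $\mcc(\VV(f_j);P)$ for every $j$ (here I am using the remark following Lemma \ref{lem:Bertini's Lemma for Hypersurfaces} that the cone-with-vertex-$P$ condition is exactly the statement that $\Lambda(P,Q)$ lies inside the cone). Intersecting over $j$ then gives $\Lambda(P,Q) \subseteq \mcc(V;P)$, so $\mcc(V;P)$ is itself a cone with vertex $P$.

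There is essentially no obstacle here: the content of the lemma is entirely contained in the hypersurface case, and the extension to intersections is purely formal since being a cone with a fixed vertex is a property preserved under arbitrary intersections. The only mild care needed is to confirm that $P$ itself lies in $\mcc(V;P)$, which is immediate because $t(j,f_i,P)$ vanishes at $P$ for all $j \in [0, \deg f_i - 1]$ and all $i$, so $P$ is a well-defined vertex.
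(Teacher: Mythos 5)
Your proof is correct and takes the same approach as the paper: the paper does not print a proof (it cites \cite[Lemma 2.11]{Sutherland2021C}), but the motivating observation immediately preceding the lemma — that a line lies on $\VV(f_1,\dotsc,f_s)$ exactly when it lies on each $\VV(f_j)$ — is precisely the intersection argument you spell out. Your reduction to Lemma \ref{lem:Bertini's Lemma for Hypersurfaces}, using that both containment and the cone-with-vertex-$P$ property are stable under intersection, together with the check that $P \in \mcc(V;P)$, is exactly the intended argument.
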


Iterating the polar cone construction yields a method for determining $k$-planes on intersections of hypersurfaces. We now recall the associated definitions, first given as \cite[Definition 2.22]{Sutherland2021C}.

\begin{definition}\label{def:Iterated Polar Cones and k-Polar Points} \textbf{(Iterated Polar Cones and k-Polar Points)} \\
Let $V \subseteq \PP_K^r$ be an intersection of hypersurfaces and $P_0 \in V(K)$. First, set $\mcc^1(V;P_0) := \mcc(V;P_0)$. Given additional points $P_1,\dotsc,P_{k-1} \in V(K)$ such that
\begin{equation*}
	P_{\ell} \in \mcc^{\ell}(V;P_0,\dotsc,P_{\ell-1}) \setminus \Lambda\lp P_0,\dotsc,P_{\ell-1} \rp
\end{equation*}

\noindent
for $\ell \in [1,k-1]$, the \textbf{$k^{th}$ polar cone of $V$ at $P_0,\dotsc,P_{k-1}$} is
\begin{equation*}
	\mcc^k(V;P_0,\dotsc,P_{k-1}) := \mcc\lp \mcc^{k-1}(V;P_0,\dotsc,P_{k-2});P_{k-1} \rp.
\end{equation*}

\noindent
We refer to an ordered collection of such points $(P_0,\dotsc,P_k)$ as a \textbf{$k$-polar point} of $V$. 

If the points $P_0,\dotsc,P_{k-1}$ have already been chosen, we refer to $\mcc^k(V;P_0,\dotsc,P_{k-1})$ as \emph{the} $k^{th}$ polar cone of $V$. In the event that such points exist, but have not been explicitly chosen, we refer to \emph{a} $k^{th}$ polar cone of $V$. Additionally, it is sometimes useful to refer to $V$ itself as a zeroth polar cone of $V$ (at any of its $K$-points). 
\end{definition}

By noting that iterated polar cones are nested, i.e.
\begin{equation*}
	\mcc^k(V;P_0,\dotsc,P_{k-1}) \subseteq \mcc^{k-1}(V;P_0,\dotsc,P_{k-2}) \subseteq \cdots \subseteq \mcc^2(V;P_0,P_1) \subseteq \mcc(V;P_0) \subseteq V
\end{equation*}

\noindent
and that the points $P_0,\dotsc,P_k$ defining a $k$-polar point $(P_0,\dotsc,P_k)$ are in general position, we arrive at the following $k$-plane analogue of Lemma \ref{lem:Bertini's Lemma for Intersections of Hypersurfaces}, which is originally \cite[Lemma 2.24]{Sutherland2021C}:

\begin{lemma}\label{lem:Polar Point Lemma} \textbf{(Polar Point Lemma)} \\
Let $V \subseteq \PP_K^r$ be an intersection of hypersurfaces and let $(P_0,\dotsc,P_k)$ be a $k$-polar point of $V$. Then, $\Lambda(P_0,\dotsc,P_k) \subseteq \mcc^k(V;P_0,\dotsc,P_{k-1}) \subseteq V$ is a $k$-plane.
\end{lemma}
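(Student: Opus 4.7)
My plan is a two-part unraveling built on the set-theoretic description
\[
\mcc(W;P) \;=\; \lb\, Q \in \PP^r(K) \st \Lambda(P,Q) \subseteq W \,\rb,
\]
valid for any intersection of hypersurfaces $W \subseteq \PP_K^r$ and any $P \in W(K)$. The forward inclusion is precisely the ``In particular'' clause of Lemma \ref{lem:Bertini's Lemma for Intersections of Hypersurfaces}; the reverse inclusion comes from expanding each defining polynomial $f$ along the line $\Lambda(P,Q)$, since the coefficients of $f(sP+tQ) \in K[s,t]$ are, up to nonzero scalar multiples, exactly the polar values $t(j,f,P)(Q)$ from Definition \ref{def:Polars and Polar Cones}. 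Writing $V_j := \mcc^j(V;P_0,\dotsc,P_{j-1})$, the goal is $\Lambda(P_0,\dotsc,P_k) \subseteq V_k$.

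First, I would show $\Lambda(P_0,\dotsc,P_k) \subseteq V$ by downward induction on $\ell$ from $k-1$ to $0$, proving
\[
\Lambda(P_\ell,\dotsc,P_k) \subseteq V_\ell
\]
at each stage. The base case $\ell = k-1$ translates $P_k \in V_k$ directly, via the characterization, into $\Lambda(P_{k-1},P_k) \subseteq V_{k-1}$. For the inductive step, every $R$ on the $(k-\ell)$-plane $\Lambda(P_\ell,\dotsc,P_k) \subseteq V_\ell$ satisfies $\Lambda(P_{\ell-1},R) \subseteq V_{\ell-1}$ by the characterization; as $R$ sweeps the plane, these lines collectively fill out the $(k-\ell+1)$-plane $\Lambda(P_{\ell-1},P_\ell,\dotsc,P_k)$. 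This sweep-out relies on the linear independence of $P_0,\dotsc,P_k$, which is a direct consequence of the conditions $P_\ell \notin \Lambda(P_0,\dotsc,P_{\ell-1})$ built into the $k$-polar point definition. Setting $\ell = 0$ yields $\Lambda(P_0,\dotsc,P_k) \subseteq V$.

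Second, I would bootstrap from $V$ up to $V_k$ by induction on $j \in [0,k]$, proving $\Lambda := \Lambda(P_0,\dotsc,P_k) \subseteq V_j$. The base $j = 0$ is the conclusion of the first part. For the inductive step, suppose $\Lambda \subseteq V_{j-1}$; for any $Q \in \Lambda$, the line $\Lambda(P_{j-1},Q)$ already sits inside $\Lambda$ (since $P_{j-1} \in \Lambda$) and hence inside $V_{j-1}$, so the reverse direction of the characterization forces $Q \in \mcc(V_{j-1};P_{j-1}) = V_j$. Taking $j = k$ completes the proof, and the fact that $\Lambda$ is a genuine $k$-plane is again immediate from the general-position part of Definition \ref{def:Iterated Polar Cones and k-Polar Points}.

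The main obstacle I expect is the geometric sweep-out in the first part: one must check that $\bigcup_{R \in \Lambda(P_\ell,\dotsc,P_k)} \Lambda(P_{\ell-1},R)$ really equals the $(k-\ell+1)$-plane $\Lambda(P_{\ell-1},P_\ell,\dotsc,P_k)$ at each step, and it is exactly here that the $k$-polar point general-position hypothesis is indispensable; without it, the union could collapse to a lower-dimensional locus and the induction would fail to increase dimension.
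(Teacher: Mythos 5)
Your proof is correct. The paper does not give a self-contained argument here; it records only that the iterated polar cones are nested and that the $P_\ell$ are in general position, deferring the details to Lemma 2.24 of \cite{Sutherland2021C}. Your proposal supplies a complete argument in the same spirit. The set-theoretic characterization $\mcc(W;P) = \lb Q \st \Lambda(P,Q) \subseteq W \rb$ is the right engine, and you are correct that both directions are needed and come from different places: the forward inclusion is Bertini's cone property (Lemma \ref{lem:Bertini's Lemma for Intersections of Hypersurfaces}), while the reverse requires the polar/Taylor expansion of each defining polynomial along the pencil through $P$ and $Q$, which is not stated as a lemma in the paper but does follow from Definition \ref{def:Polars and Polar Cones}. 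The downward induction uses only the forward direction to place $\Lambda(P_\ell,\dotsc,P_k)$ inside $V_\ell$, while the upward bootstrap from $\Lambda \subseteq V$ to $\Lambda \subseteq V_k$ genuinely needs the reverse direction, since you must pass from the observation that $\Lambda$ is a cone with vertex $P_{j-1}$ contained in $V_{j-1}$ to the conclusion $\Lambda \subseteq \mcc(V_{j-1};P_{j-1})$. Finally, you are right that the sweep-out hinges on general position: the $k$-polar point definition gives $P_\ell \notin \Lambda(P_0,\dotsc,P_{\ell-1})$, which by an easy induction yields linear independence of $P_0,\dotsc,P_k$, and hence $P_{\ell-1} \notin \Lambda(P_\ell,\dotsc,P_k)$ — the precise condition needed for $\bigcup_{R} \Lambda(P_{\ell-1},R)$ to fill out $\Lambda(P_{\ell-1},\dotsc,P_k)$ rather than collapse, and also the reason $\Lambda(P_0,\dotsc,P_k)$ is a genuine $k$-plane.
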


\subsection{Tschirnhaus Transformations}

We use the notation and conventions of \cite[Subsection 3.1]{Sutherland2021C} for Tschirnhaus transformations and refer the reader there for details. Note also that Wolfson provides a more complete history of Tschirnhaus transformations in \cite[Section 2 and Appendix B]{Wolfson2021}. Let $K_n = \CC(a_1,\dotsc,a_n)$ be a purely transcendental extension of $\CC$ with transcendence basis $a_1,\dotsc,a_n$. 

\begin{definition}\label{def:General Polynomials} \textbf{(General Polynomials)} \\
The \textbf{general polynomial of degree $n$} is the polynomial
\begin{equation*}
	\phi_n(z) = z^n + a_1z^{n-1} + \cdots + a_{n-1}z + a_n \in K_n[z].
\end{equation*}
\end{definition}

\begin{definition}\label{def:Tschirnhaus Transformations} \textbf{(Tschirnhaus Transformations)} \\
A \textbf{Tschirnhaus transformation} of the general degree $n$ polynomial is an isomorphism of $K_n$-fields
\begin{equation*}
	\Upsilon:K_n[z]/(\phi_n(z)) \ra K_n[z]/(\psi(z)),
\end{equation*}

\noindent
where $\psi(z) = z^n + b_1z^{n-1} + \cdots + b_{n-1}z + b_n$. We say that $\Upsilon$ has type $(j_1,\dotsc,j_k)$ if $b_{j_1} = \cdots = b_{j_k} = 0$.
\end{definition}

As per Remark 3.3 of \cite{Sutherland2021C}, the space of all Tschirnhaus transformations of the general degree $n$ polynomial (up to re-scaling) is
\begin{equation*}
	\mct_{K_n}^n := \PP_{K_n}^{n-1} \setminus [1:0:\cdots:0] \subseteq \PP_{K_n}^{n-1}. 
\end{equation*}

\noindent
Note that each $b_j$ in Definition \ref{def:Tschirnhaus Transformations} is a homogeneous polynomial of degree $j$ in $a_1,\dotsc,a_n$.

\begin{definition}\label{def:Tschirnhaus Complete Intersections} \textbf{(Tschirnhaus Complete Intersections)} \\
Fix $n \in \ZZ_{\geq 1}$. For any $m \in [1,n-1]$, the \textbf{$m^{th}$ extended Tschirnhaus hypersurface is}
\begin{equation*}
	\tau_m := \VV(b_m) \subseteq \PP_{K_n}^{n-1},
\end{equation*}

\noindent
and the \textbf{$m^{th}$ extended Tschirnhaus complete intersection} is
\begin{equation*}
	\tau_{1,\dotsc,m} := \bigcap\limits_{j=1}^m \tau_j \subseteq \PP_{K_n}^{n-1}.
\end{equation*}

\noindent
Additionally, the \textbf{$m^{th}$ Tschirnhaus hypersurface} is
\begin{equation*}
	\tau_m^\circ := \tau_m \cap \mct_{K_n}^n = \tau_m \setminus \lb [1:0:\cdots:0] \rb,
\end{equation*} 

\noindent
and the \textbf{$m^{th}$ Tscihrnhaus complete intersection is}
\begin{equation*}
	\tau_{1,\dotsc,m}^\circ := \tau_{1,\dotsc,m} \cap \mct_{K_n}^n = \tau_{1,\dotsc,m} \setminus \lb [1:0:\cdots:0] \rb.
\end{equation*}
\end{definition}

\begin{remark}\label{rem:Strategy for Upper Bounds on RD(n)} \textbf{(Strategy for Upper Bounds on $\RD(n)$}\\
If we can determine a $K'$-rational point of $\tau_{1,\dotsc,m-1}^\circ$ over an extension $K'/K_n$ of sufficiently small resolvent degree, then we can conclude that $\RD(n) \leq n-m$. Notice that if we can determine an $(m-d-1)$-plane $\Lambda \subseteq \tau_{1,\dotsc,d}^\circ$ over an extension $L/K_n$ of low resolvent degree, then we need only further pass to an extension $K'/L$ with $\RD(K'/L) \leq \RD\lp \frac{(m-1)!}{d!} \rp$, by Lemma \ref{lem:Properties of Resolvent Degree}. 
\end{remark}

Lemma \ref{lem:Polar Point Lemma} yields that every $k$-polar point determines a $k$-plane, hence Remark \ref{rem:Strategy for Upper Bounds on RD(n)} yields that it will suffice to determine $k$-polar points on the Tschirnhaus complete intersections $\tau_{1,\dotsc,d}^\circ$.

\section{The Obliteration Algorithms}\label{sec:The Obliteration Algorithms}

In \cite{Sylvester1887}, Sylvester gives an algorithm to determine an upper bound on the number of variables required to determine a non-trivial solution for a system of homogeneous polynomials of given degrees by solving polynomials of the same, or lower, degrees. The algorithm centers on Sylvester's ``formula of obliteration'' \cite[p.475]{Sylvester1887}, which will be covered in detail in Corollary \ref{cor:Geometric Formula of Obliteration} and Proposition \ref{prop:Sylvester's Formula of Obliteration}. Consequently, we refer to Sylvester's method as the ``obliteration algorithm.'' In Subsection \ref{subsec:The Geometric Obliteration Algorithm}, we give a modern description of the obliteration algorithm via geometry (in terms of varieties, rational points, and polar cones). In Subsection \ref{subsec:Sylvester's Obliteration Algorithm}, we describe the obliteration algorithm in terms of systems of homogeneous polynomials and explain Sylvester's classical language.

\subsection{The Geometric Obliteration Algorithm}\label{subsec:The Geometric Obliteration Algorithm}
WE now give a geometric construction of Sylvester's obliteration algorithm. More specifically, given an intersection of hypersurfaces $V \subseteq \PP_K^r$, we give a bound on the ambient dimension required to be able to determine a point of $V$ over an extension $K'/K$ of bounded resolvent degree. Note that this bound depends only on the type of $V$.

\begin{definition}\label{def:Minimal Dimension Bound} \textbf{(Minimal Dimension Bound)}\\
The \textbf{minimal dimension bound of type $\lbm d &\cdots &1\\ \ell_d &\cdots &\ell_1 \rem$}, denoted $r(d;\ell_d,\dotsc,\ell_1)$ is the minimal $r' \in \ZZ_{\geq 1} \cup \{\infty\}$ such that whenever $r \geq r'$, we can determine a point of any intersection of hypersurfaces of type $\lbm d &\cdots &1\\ \ell_d &\cdots &\ell_1 \rem$ in $\PP_K^r$ over an extension $K'/K$ with $\RD\lp K'/K \rp \leq \RD(d)$. Given an intersection of hypersurfaces $V$ of type $\lbm d &\cdots &1\\ \ell_d &\cdots &\ell_1 \rem$, we set $r(V) := r(d;\ell_d,\dotsc,\ell_1)$.
\end{definition}

\begin{remark}\label{rem:Finiteness of the Minimal Dimension Bound} \textbf{(Finiteness of the Minimal Dimension Bound)}\\
The main goal of this section is to establish an upper bound on $r(d;\ell_d,\dotsc,\ell_1)$. More specifically, we introduce a recursive, combinatorial bound $g(d;\ell_d,\dotsc,\ell_1)$ in Definition \ref{def:Geometric Dimension Bound} which we will show satisfies
\begin{equation}\label{eqn:Fundamental Inequality}
	r(d;\ell_d,\dotsc,\ell_1) \leq g(d;\ell_d,\dotsc,\ell_1).
\end{equation}

\noindent
The proof of inequality (\ref{eqn:Fundamental Inequality}) is exactly the geometric version of the obliteration algorithm. 
\end{remark}

We now give Definition \ref{def:Geometric Dimension Bound} and note that the underlying geometric intuition is explained in Lemma \ref{lem:The Reduction Lemma} and Remark \ref{rem:Geometric Insight for the Reduction Lemma}. 

\begin{definition}\label{def:Geometric Dimension Bound}  \textbf{(Geometric Dimension Bound)}\\
The \textbf{geometric dimension bound of type $\lbm 1 \\ \ell_1 \rem$} is $g(1;\ell_1) := \ell_1$. Similarly, the geometric dimension bound \textbf{of type $\lbm 2 &1\\ 1 &\ell_1 \rem$} is $g(2;1,\ell_1) := 1+\ell_1$ and the geometric dimension bound \textbf{of type $\lbm 2 &1\\ \ell_2 &\ell_1 \rem$} with $\ell_2 \geq 2$ is
\begin{equation*}
	g(2;\ell_2,\ell_1) := g(2;\ell_2-1,\ell_2+\ell_1+1).
\end{equation*}

\noindent
For $d \geq 3$, the geometric dimension bound \textbf{of type $\lbm d &d-1 &\cdots &2 &1\\ 1 &\ell_{d-1} &\cdots &\ell_2 &\ell_1 \rem$} is
\begin{equation*}
	g(d;1,\ell_{d-1},\dotsc,\ell_2,\ell_1) := g\lp d-1;\ell_{d-1},(\ell_{d-1}+\ell_{d-2}),\dotsc,\SL_{j=2}^{d-1} \ell_j, \lp \SL_{j=1}^{d-1} \ell_j \rp + 1 \rp.
\end{equation*}

\noindent
For $d \geq 3$ and $\ell_d \geq 2$, the geometric dimension bound \textbf{of type $\lbm d &d-1 &\cdots &2 &1\\ 1 &\ell_{d-1} &\cdots &\ell_2 &\ell_1 \rem$} is
\begin{equation*}
	g(d;\ell_d,\ell_{d-1},\dotsc,\ell_2,\ell_1) := g\lp d;\ell_d-1,(\ell_d+\ell_{d-1})-1,\dotsc,\lp \SL_{j=2}^d \ell_j \rp - 1, \SL_{j=1}^{d-1} \ell_j \rp.
\end{equation*}

\noindent
Finally, given an intersection of hypersurfaces $V$ of type $\lbm d &d-1 &\cdots &2 &1\\ \ell_d &\ell_{d-1} &\cdots &\ell_2 &\ell_1 \rem$, we set
\begin{equation*}
	g(V) := g(d;\ell_d,\dotsc,\ell_1).
\end{equation*}

\end{definition}

\begin{remark}\label{rem:Hyperplane Identities} \textbf{(Hyperplane Identities)}\\
The definitions of both the minimal and geometric dimension bounds admit a ``hyperplane identity,'' which we use without explicit reference:
	\begin{align*}\label{eqn:Hyperplane Identity}
		1+r(d;\ell_d,\dotsc,\ell_2,\ell_1) &= r(d;\ell_d,\dotsc,\ell_2,\ell_1+1),\\
		1+g(d;\ell_d,\dotsc,\ell_2,\ell_1) &= g(d;\ell_d,\dotsc,\ell_2,\ell_1+1).
	\end{align*}
\end{remark}
	
We next state Lemma \ref{lem:The Reduction Lemma}, which is the technical underpinning of the geometric obliteration algorithm and which specializes to give the geometric version of Sylvester's formula of reduction. 

\begin{lemma}\label{lem:The Reduction Lemma}  \textbf{(The Reduction Lemma)}\\
	Let $V$ be an intersection of hypersurfaces of type $\lbm d &\cdots &1\\ \ell_d &\cdots &\ell_1 \rem$ with $d \geq 2$ and which is not a hypersurface. Take $V_d$ to be a degree $d$ hypersurface and $V^\red$ to be an intersection of hypersurfaces of type 
\begin{equation*}
	\lbm d &\cdots &1\\ \ell_d-1 &\cdots &\ell_1 \rem
\end{equation*}

\noindent
if $\ell_d \geq 2$ and of type
\begin{equation*}
	\lbm d-1 &\cdots &1\\ \ell_{d-1} &\cdots &\ell_1 \rem
\end{equation*}

\noindent
	
\noindent
if $\ell_d=1$, such that $V = V^\red \cap V_d$. Let $P \in V^\red(K)$ and take $H$ to be a hyperplane which does not contain $P$. Then,
	\begin{equation*}
		g(V) = g(H \cap \mcc(V^\red;P)) = g(\mcc(V^\red;P))+1. 
	\end{equation*}
\end{lemma}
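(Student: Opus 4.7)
The plan is to split the asserted equalities into two parts and verify each by unwinding the recursive definition of $g$ against a direct computation of the type of $\mcc(V^{\red}; P)$. The second equality $g(H \cap \mcc(V^{\red}; P)) = g(\mcc(V^{\red}; P)) + 1$ is essentially free: the hypothesis $P \notin H$ together with $P \in \mcc(V^{\red}; P)$ (as the vertex of the cone) guarantees that $H$ contributes a genuine new linear form to the defining system of $\mcc(V^{\red}; P)$, so the hyperplane identity of Remark~\ref{rem:Hyperplane Identities} bumps $g$ by exactly one. Thus the crux is the first equality $g(V) = g(H \cap \mcc(V^{\red}; P))$.

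For this, I would first read off the type of $\mcc(V^{\red}; P)$ from the type of $V^{\red}$. By Definition~\ref{def:Polars and Polar Cones}, each defining polynomial $f$ of degree $j$ in $V^{\red}$ contributes one polar $t(i, f, P)$ of each degree in $[1,j]$ (indexed by $i = 0, 1, \ldots, j-1$) to $\mcc(\VV(f); P)$. Taking the intersection over all defining polynomials of $V^{\red}$, the multiplicity at degree $k$ in $\mcc(V^{\red}; P)$ equals the total number of defining polynomials of $V^{\red}$ of degree at least $k$; intersecting with $H$ then adds exactly $1$ to the degree-$1$ count.

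The remainder is a case analysis matching the resulting type of $H \cap \mcc(V^{\red}; P)$ against the recursive formula for $g(V)$ in Definition~\ref{def:Geometric Dimension Bound}. If $\ell_d = 1$, then $V^{\red}$ has no degree-$d$ polynomial, so the polar cone has maximum degree $d-1$ and the above count yields multiplicities $(\ell_{d-1},\, \ell_{d-1}+\ell_{d-2},\, \ldots,\, \sum_{j=1}^{d-1} \ell_j + 1)$ at degrees $(d-1,\ldots,1)$ after adjoining $H$; this is precisely the RHS of the recursion $g(d;1,\ell_{d-1},\ldots,\ell_1) = g(d-1; \ell_{d-1},\ldots,(\sum_{j=1}^{d-1} \ell_j)+1)$. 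If $\ell_d \geq 2$, then $V^{\red}$ retains a degree-$d$ polynomial and the analogous multiplicity computation recovers the other recursive reduction of Definition~\ref{def:Geometric Dimension Bound}.

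The main obstacle is purely the bookkeeping: correctly counting the polar-cone multiplicities degree by degree and aligning the resulting indexed sums with the RHS of the recursion defining $g$. No deeper algebro-geometric input is required beyond the polar cone construction of Definition~\ref{def:Polars and Polar Cones}; indeed, the Reduction Lemma is essentially the conceptual reason why the recursion for $g$ in Definition~\ref{def:Geometric Dimension Bound} is arranged as it is, and the proof reduces to verifying that the two sides agree as combinatorial rules in each of the two recursive cases.
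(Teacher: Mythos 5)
The proposal is correct and takes essentially the same approach as the paper: read off the type of $\mcc(V^{\red};P)$ (and then of $H\cap\mcc(V^{\red};P)$) from Definition \ref{def:Polar Cone of an Intersection of Hypersurfaces}, observing that each defining polynomial of degree $j$ contributes one polar of each degree $1,\dotsc,j$, and then match this type against the recursive clause of Definition \ref{def:Geometric Dimension Bound} in each of the two cases $\ell_d\geq 2$ and $\ell_d=1$. Your write-up just makes explicit the degree-by-degree count that the paper's proof records only as the stated type, and you correctly isolate the hyperplane identity of Remark \ref{rem:Hyperplane Identities} as the source of the final ``$+1$.''
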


\begin{proof} 
	First, consider when $\ell_d \geq 2$. From Definition \ref{def:Polar Cone of an Intersection of Hypersurfaces}, observe that $\mcc(V^\red;P)$ has type
\begin{equation*}
	\lbm d &\cdots &1\\ \ell_{d}-1 &\cdots &\lp \SL_{j=1}^d \ell_j \rp -1\rem.
\end{equation*}.

\noindent
From Definition \ref{def:Geometric Dimension Bound}, it follows that
\begin{align*}
	g(V) &= g(d;\ell_d,\dotsc,\ell_1)\\
	&= g\lp d;\ell_d-1,(\ell_d+\ell_{d-1})-1,\dotsc,\lp \SL_{j=2}^d \ell_j \rp -1, \SL_{j=1}^d \ell_j \rp\\
	&= g\lp \mcc(V^\red;P) \rp + 1\\
	&=  g\lp H \cap \mcc(V^\red;P) \rp.
\end{align*}

\noindent
Similarly, when $\ell_d=1$, we have
\begin{align*}
	g(V) &= g(d;\ell_d,\dotsc,\ell_1)\\
	&= g\lp d-1;\ell_{d-1},(\ell_d+\ell_{d-1}),\dotsc, \SL_{j=2}^d \ell_j, \lp \SL_{j=1}^d \ell_j \rp + 1 \rp\\
	&= g\lp \mcc(V^\red;P) \rp + 1\\
	&=  g\lp H \cap \mcc(V^\red;P) \rp.
	\qedhere
\end{align*}
\end{proof}

\begin{remark}\label{rem:Geometric Insight for the Reduction Lemma} \textbf{(Geometric Insight for the Reduction Lemma)}\\
The proof of Lemma \ref{lem:The Reduction Lemma} follows immediately from Definition \ref{def:Geometric Dimension Bound}, but we wish to address the geometric reasoning underlying the lemma. Suppose our goal is to determine a point $Q$ of $V$ over an extension of bounded resolvent degree. Observe that if we can determine a line $\Lambda \subseteq V^\red$, then we need only solve a degree $d$ polynomial to determine a point of $V$. As $V^\red$ is $V$ with $V_d$ removed, it is already ``less difficult'' to determine the point $P \in V^\red(K)$ given by assumption (i.e. $g(V) \geq g(V^\red)$).  Additionally, we can determine a line $\Lambda \subseteq V^\red$ by determining a point $P' \not= P$ of $\mcc(V^\red;P)$. As $H$ is taken to be a hyperplane which does not contain $P$, it suffices to determine any point of $\mcc(V^\red;P) \cap H$, which is also ``less difficult'' as $\mcc(V^\red;P)$ is defined by fewer top degree hypersurfaces.
\end{remark}

As in Lemma \ref{lem:The Reduction Lemma}, we will frequently want to split an intersection of hypersurfaces $V$ into parts analogous to $V^\red$ and $V_d$, and so we introduce the following terminology and notation.

\begin{definition}\label{def:Reduction and Complement} \textbf{(Reduction and Complement)}\\
	Given an intersection of hypersurfaces $V$ of type $\lbm d &\cdots &1\\ \ell_d &\cdots &\ell_1 \rem$ with $\ell_d \geq 2$, a \textbf{reduction} of $V$ is an intersection of hypersurfaces $V^{\red}$ of type $\lbm d &d-1 &\cdots &2 &1\\ \ell_d-1 &\ell_{d-1} &\cdots &\ell_2 &\ell_1 \rem$ such that $V = V^\red \cap V_d$ for some degree $d$ hypersurface $V_d$, we which refer to as a \textbf{complement of $V^\red$ for $V$}.
	
	When $V$ is an intersection of hypersurfaces of type $\lbm d &\cdots &1\\ \ell_d &\cdots &\ell_1 \rem$ with $\ell_d = 1$, a reduction of $V$ is an intersection of hypersurfaces $V^{\red}$ of type $\lbm d-1 &\cdots &1\\ \ell_{d-1} &\cdots &\ell_1 \rem$ such that $V = V^\red \cap V_d$	for some degree $d$ hypersurface $V_d$, we which refer to as a complement of $V^\red$ for $V$.
\end{definition}

With Lemma \ref{lem:The Reduction Lemma} and Definition \ref{def:Reduction and Complement} in place, we now state the geometric version of Sylvester's ``formula of reduction'' \cite[p.475]{Sylvester1887}.

\begin{corollary}\label{cor:Geometric Formula of Reduction}  \textbf{(Geometric Formula of Reduction)}\\
Let $W$ be an intersection of hypersurfaces of type $\lbm d &\cdots &1\\ \ell_d &\cdots &\ell_1 \rem$. Then, for any $P_0 \in W(K)$, any reduction $\mcc(W;P_0)^\red$, and any $P_1 \in \mcc(W;P_0)^\red(K)$, we have
\begin{align*}
	g\lp \mcc(W;P_0) \rp &= g\lp \mcc\lp \mcc(W;P_0)^\red;P_1 \rp \rp + 1.
\end{align*}
\end{corollary}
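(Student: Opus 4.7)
The plan is to apply the Reduction Lemma (Lemma \ref{lem:The Reduction Lemma}) directly, taking as input the intersection of hypersurfaces $V := \mcc(W; P_0)$, reduction $V^\red := \mcc(W; P_0)^\red$, complement $V_d$ supplied by Definition \ref{def:Reduction and Complement}, and point $P := P_1 \in V^\red(K)$. Under this identification, the desired identity is precisely the second equality in the conclusion of the Reduction Lemma, so the corollary should fall out as an immediate renaming.

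First I would unpack Definition \ref{def:Reduction and Complement} applied to $\mcc(W; P_0)$: the hypothesis that a reduction $\mcc(W; P_0)^\red$ exists gives a decomposition $\mcc(W; P_0) = \mcc(W; P_0)^\red \cap V_d$ with $V_d$ a degree $d$ hypersurface. Then I would verify the hypotheses of the Reduction Lemma for the ambient intersection $\mcc(W;P_0)$: its top degree is $d \geq 2$ (implicit in the existence of a reduction that removes a degree $d$ generator), and it is not a single hypersurface since, by Definition \ref{def:Polar Cone of an Intersection of Hypersurfaces}, $\mcc(W;P_0)$ is an intersection of polar hypersurfaces spanning all degrees $1, \ldots, d$ arising from each generator of $W$.

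Invoking the Reduction Lemma with these inputs then yields
\begin{equation*}
	g(\mcc(W; P_0)) = g(\mcc(\mcc(W; P_0)^\red; P_1)) + 1,
\end{equation*}
which is exactly the claimed formula.

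The main — and essentially only — obstacle is organizational: one must confirm that the type bookkeeping for $\mcc(W;P_0)$, its reduction $\mcc(W;P_0)^\red$, and the further polar cone $\mcc(\mcc(W;P_0)^\red; P_1)$ lines up with Definitions \ref{def:Reduction and Complement} and \ref{def:Geometric Dimension Bound} in the way that the Reduction Lemma requires. Since this is precisely the content of those definitions and of Lemma \ref{lem:The Reduction Lemma}, no additional calculation is needed, and the corollary is obtained as a direct specialization.
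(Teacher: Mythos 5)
Your proposal is correct and takes exactly the same route as the paper: apply Lemma \ref{lem:The Reduction Lemma} to $V = \mcc(W;P_0)$ with $V^\red = \mcc(W;P_0)^\red$ and $P = P_1$, and read off the second equality in its conclusion. The extra checks you note (that $d\geq 2$ and that $\mcc(W;P_0)$ is not a single hypersurface are guaranteed by the existence of a reduction and by the polar-cone construction) are sound, though the paper treats them as immediate.
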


\begin{proof}
	This follows immediately as a special case of Lemma \ref{lem:The Reduction Lemma} applied to $V=\mcc(W;P_0)$.
\end{proof}

We will soon want to successively iterate Lemma \ref{lem:The Reduction Lemma} so that we can eliminate the hypersurfaces of largest degree from any intersection of hypersurfaces by introducing many hypersurfaces of strictly lower degree. This is achieved in Proposition \ref{prop:The Obliteration Proposition}. However, we first introduce additional language and notation to refer to the varieties which arise in this process of reduction.

\begin{definition}\label{def:Sylvester Reductions} \textbf{(Sylvester Reductions)}\\
	Let $V$ be an intersection of hypersurfaces of type $\lbm d &\cdots &1\\ \ell_d &\cdots &\ell_1 \rem$ with $d \geq 2$ and which is not a hypersurface. A \textbf{first partial Sylvester reduction of $V$} is
	\begin{equation*}
		V^\Syl(d;1) := \mcc(V^\red;P_0),
	\end{equation*} 
	
	\noindent
	where $V^\red$ is any reduction of $V$ and $P_0 \in V^\red(K)$. Proceeding inductively, for any $j \in [2,\ell_d]$, a \textbf{$j^{th}$ partial Sylvester reduction of $V$} is
	\begin{equation*}
		V^\Syl(d;j) := \mcc(H_{j-1} \cap V_{j-1}^\Syl;P_k) = H_{j-1} \cap \mcc(V_{j-1}^\Syl;P_k) ,
	\end{equation*}
	
	\noindent
	where $H_{j-1}$ is a hyperplane which does not contain $P_{j-1}$ and $P_j \in \lp H_{k-1} \cap V_{k-1}^\Syl(d;j-1) \rp(K)$.
	
	When $d \geq 3$, a \textbf{first Sylvester reduction of $V$} is
	\begin{equation*}
		V_1^\Syl := V^\Syl(d;\ell_d). 
	\end{equation*}
	
	\noindent
	For each $j \in [2,d-1]$, let $\lambda_{d-j+1}$ be the number of degree $d-j+1$ hypersurfaces defining a $(j-1)^{st}$ Sylvester reduction $V_{j-1}^\Syl$. Then, a \textbf{$j^{th}$ Sylvester reduction of $V$} is
	\begin{equation*}
		V_j^\Syl := \lp V_{j-1}^\Syl \rp^\Syl(d-j+1;\lambda_{d-j+1}).
	\end{equation*}
\end{definition}

Continuing with the notation of Definition \ref{def:Sylvester Reductions}, note that $V_j^\Syl$ is a variety obtained by repeatedly applying Lemma \ref{lem:The Reduction Lemma} to $V$ to remove all hypersurfaces of degree $> d-j$.

\begin{prop}\label{prop:The Obliteration Proposition} \textbf{(The Obliteration Proposition)}\\
Let $V$ be an intersection of hypersurfaces of type $\lbm d &\cdots &1\\ \ell_d &\cdots &\ell_1 \rem$ with $d \geq 2$ which is not a hypersurface. Then,
\begin{equation*}
	g(V) = g\lp V_1^\Syl \rp.
\end{equation*}

\noindent
for any first Sylvester reduction $V_1^\Syl$ of $V$. 
\end{prop}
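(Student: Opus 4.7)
My plan is to establish the identity by iterating Lemma~\ref{lem:The Reduction Lemma} exactly $\ell_d$ times, one application for each degree-$d$ hypersurface defining $V$. At each step the Reduction Lemma preserves the geometric dimension bound $g$ while replacing one top-degree hypersurface with lower-degree polars and (possibly) an extra hyperplane. I would organize the argument as an induction on $j \in [1, \ell_d]$ showing that $g(V)$ equals $g(V^{\Syl}(d;j))$, from which the conclusion $g(V) = g(V_1^{\Syl})$ follows by taking $j=\ell_d$, since $V_1^{\Syl} = V^{\Syl}(d;\ell_d)$ by Definition~\ref{def:Sylvester Reductions}.

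For the base case $j=1$, I apply Lemma~\ref{lem:The Reduction Lemma} to $V = V^{\red}\cap V_d$ with the data $(P_0, H_0)$ supplied by Definition~\ref{def:Sylvester Reductions}, which yields $g(V) = g(H_0 \cap \mcc(V^{\red};P_0))$. Since $V^{\Syl}(d;1) = \mcc(V^{\red};P_0)$, this matches $g(V^{\Syl}(d;1))$ after absorbing the extra hyperplane factor via the hyperplane identity of Remark~\ref{rem:Hyperplane Identities}. For the inductive step, note that $V^{\Syl}(d;j-1)$ still carries $\ell_d - (j-1) \geq 1$ degree-$d$ hypersurfaces (by the preceding reductions). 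Applying Lemma~\ref{lem:The Reduction Lemma} to $V^{\Syl}(d;j-1)$ using a reduction removing one such hypersurface, together with the data $(P_{j-1}, H_{j-1})$ from Definition~\ref{def:Sylvester Reductions}, yields $g(V^{\Syl}(d;j-1)) = g(H_{j-1} \cap \mcc((V^{\Syl}(d;j-1))^{\red};P_{j-1}))$, and the right-hand side equals $g(V^{\Syl}(d;j))$ by construction, closing the induction.

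The main technical obstacle is the bookkeeping required to reconcile the iterated Reduction Lemma outputs with the definitions of the $V^{\Syl}(d;j)$: one must verify that the hyperplanes $H_0,\ldots,H_{j-2}$ carried through earlier iterations interact correctly with the subsequent polar-cone operations. This follows from the identity $\mcc(H;P) = H$ for any hyperplane $H$ and any point $P \in H$, which applies here because each new base point $P_{j-1}$ lies on the entire accumulated variety and hence on all previously introduced hyperplanes. Combined with the hyperplane identity of Remark~\ref{rem:Hyperplane Identities}, this ensures that hypersurface types and $g$-values align at every step, closing the chain $g(V) = g(V^{\Syl}(d;1)) = \cdots = g(V^{\Syl}(d;\ell_d)) = g(V_1^{\Syl})$.
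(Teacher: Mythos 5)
Your proposal follows essentially the same route as the paper: apply Lemma~\ref{lem:The Reduction Lemma} once per degree-$d$ hypersurface to obtain the chain $g(V) = g(V^\Syl(d;1)) = \cdots = g(V^\Syl(d;\ell_d)) = g\lp V_1^\Syl \rp$, which is exactly the argument the paper gives.

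One caution on your base step. You write that $g(V) = g(H_0 \cap \mcc(V^\red;P_0))$ ``matches $g(V^\Syl(d;1))$ after absorbing the extra hyperplane factor via the hyperplane identity.'' But the hyperplane identity is $g(H_0 \cap \mcc(V^\red;P_0)) = g(\mcc(V^\red;P_0)) + 1$; the $+1$ does not cancel, so if $V^\Syl(d;1)$ is taken literally as $\mcc(V^\red;P_0)$ (with no hyperplane, as Definition~\ref{def:Sylvester Reductions} states), then $g(V) = g(V^\Syl(d;1)) + 1$, not $g(V) = g(V^\Syl(d;1))$. The identity you want actually requires reading $V^\Syl(d;1)$ as $H_0 \cap \mcc(V^\red;P_0)$, i.e.\ as carrying the auxiliary hyperplane just like the higher partial reductions do; this is the convention implicitly used in the type computations of Lemma~\ref{lem:Obliterating Quadrics} (which assigns $V^\Syl(2;1)$ the type $\lbm 2 &1\\ \ell_2-1 &\ell_1+\ell_2 \rem$, one more hyperplane than $\mcc(V^\red;P_0)$ alone provides). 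The paper's own proof elides this same discrepancy, so this is really a matter of reconciling Definition~\ref{def:Sylvester Reductions} with the Reduction Lemma rather than a defect unique to your argument. Your handling of the remaining bookkeeping---using $\mcc(H;P) = H$ for $P \in H$ to see that earlier hyperplanes persist under subsequent polar cones---is correct and is in fact spelled out more explicitly than in the paper's version of the proof.
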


\begin{proof}
	From Lemma \ref{lem:The Reduction Lemma} and Definition \ref{def:Sylvester Reductions}, it follows immediately that
	\begin{equation*}
		g\lp V^\Syl(d;j) \rp = g\lp V^\Syl(d;j+1) \rp
	\end{equation*}
	
	\noindent
	for each $j \in [1,\ell_d-1]$. Consequently, applying Lemma \ref{lem:The Reduction Lemma} to $V$ and its partial Sylvester reductions yields
	\begin{equation*}
		g(V) = g\lp V^\Syl(d;1) \rp = \cdots = g\lp V^\Syl(d;\ell_d-1) \rp = g\lp V^\Syl(d;\ell_d) \rp = g\lp V_1^\Syl \rp.
		\qedhere
	\end{equation*}
\end{proof}

\begin{remark}\label{rem:Geometric Dimension Bound via Obliteration}  \textbf{(Geometric Dimension Bound via Obliteration)}\\
	From the definition of the $j^{th}$ Sylvester reductions, we can iteratively apply Proposition \ref{prop:The Obliteration Proposition} to observe that
	\begin{align*}
		g(V) = g\lp V_1^\Syl \rp = \cdots = g\lp V_{d-2}^\Syl \rp = g\lp V_{d-1}^\Syl \rp,
	\end{align*}
	
	\noindent
	which provides the most succinct description of the central argument of the geometric obliteration algorithm.
\end{remark}

We now arrive at the geometric version of Sylvester's ``formula of obliteration'' as a specialization of Proposition \ref{prop:The Obliteration Proposition}. 

\begin{corollary}\label{cor:Geometric Formula of Obliteration} \textbf{(Geometric Formula of Obliteration)}\\
	Let $W$ be an intersection of hypersurfaces of type $\lbm d &\cdots &1\\ \ell_d &\cdots &\ell_1 \rem$  with $d \geq 2$. For any $P_0 \in W(K)$ and any Sylvester reduction $\mcc(W;P_0)_1^\Syl$, we have
	\begin{equation}\label{eqn:Geometric Formula of Obliteration}
		g( \mcc(W;P_0) ) = g\lp \mcc(W;P_0)_1^\Syl \rp. 
	\end{equation}
\end{corollary}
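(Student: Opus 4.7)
The plan is to derive this corollary as a direct specialization of Proposition \ref{prop:The Obliteration Proposition}, taking $V = \mcc(W;P_0)$. The main task is to verify that the hypotheses of the proposition are satisfied by $\mcc(W;P_0)$, after which the conclusion is essentially immediate.

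First, I would confirm that $V = \mcc(W;P_0)$ is an intersection of hypersurfaces of the required form. By Definition \ref{def:Polar Cone of an Intersection of Hypersurfaces}, $\mcc(W;P_0)$ is the intersection, over the defining hypersurfaces $\VV(f_k)$ of $W$, of the first polar cones $\mcc(\VV(f_k);P_0)$. By Definition \ref{def:Polars and Polar Cones}, each such polar cone $\mcc(\VV(f_k);P_0)$ is cut out by the polars $t(j,f_k,P_0)$ for $j \in [0,\deg(f_k)-1]$, whose degrees range over $\deg(f_k), \deg(f_k)-1, \dotsc, 1$. Consequently $\mcc(W;P_0)$ is itself an intersection of hypersurfaces of some type $\lbm d' &\cdots &1\\ \ell'_{d'} &\cdots &\ell'_1 \rem$ with $d' = d \geq 2$.

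Next, I would check that $\mcc(W;P_0)$ is not a hypersurface. Since $d \geq 2$, the polar cone construction contributes at least one hypersurface of each degree in $[1,d]$ for every top-degree defining polynomial of $W$, so the total number of defining hypersurfaces of $\mcc(W;P_0)$ is at least two. Thus Proposition \ref{prop:The Obliteration Proposition} applies.

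Finally, applying Proposition \ref{prop:The Obliteration Proposition} to $V = \mcc(W;P_0)$ yields
\begin{equation*}
	g(\mcc(W;P_0)) = g\lp V_1^\Syl \rp = g\lp \mcc(W;P_0)_1^\Syl \rp,
\end{equation*}
which is exactly equation (\ref{eqn:Geometric Formula of Obliteration}). I do not anticipate a genuine obstacle here; the corollary is a clean repackaging of Proposition \ref{prop:The Obliteration Proposition} in the particular case where the ambient intersection of hypersurfaces is already a polar cone, and the only minor subtlety is the verification that the polar cone of $W$ at $P_0$ inherits the hypotheses on type and non-triviality.
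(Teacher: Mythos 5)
Your proof is correct and takes essentially the same route as the paper's own, which likewise derives the corollary as a specialization of Proposition \ref{prop:The Obliteration Proposition} with $V = \mcc(W;P_0)$. The only difference is that you explicitly verify the hypotheses (top degree $d \geq 2$, $\mcc(W;P_0)$ not a hypersurface), which the paper leaves implicit; this is a reasonable supplement but not a different argument.
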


\begin{proof}
	This follows immediately as a special case of Proposition \ref{prop:The Obliteration Proposition} with $V = \mcc(W;P_0)$. 
\end{proof}

\begin{remark}\label{rem:Explicit Numerics of the Formula of Obliteration} \textbf{(Explicit Numerics of the Formula of Obliteration)}\\
	Sylvester's formula of obliteration \cite[p.475]{Sylvester1887}, which we address in Proposition \ref{prop:Sylvester's Formula of Obliteration}, is given numerically and, for notational reasons, he chooses to write the  statement in terms of ``linear solutions'' of $\mcc(W;P_0)^\Syl(d;\ell_d-1)$ instead of $g\lp \mcc(W;P_0)_1^\Syl \rp$. For this reason, we delay the discussion of numerics of the formula of obliteration to Subsection \ref{subsec:Sylvester's Obliteration Algorithm}.
\end{remark}

As we have established the reduction lemma and the obliteration proposition, which we used to recover Sylvester's formula of reduction and formula of obliteration, we proceed to prove inequality (\ref{eqn:Fundamental Inequality}). 

\begin{prop} \label{prop:Minimal vs. Geometric Dimension Bound} \textbf{(Minimal vs. Geometric Dimension Bound)}\\
	For every type $\lbm d &\cdots &1\\ \ell_d &\cdots &\ell_1 \rem$ of an intersection of hypersurfaces, $r(d;\ell_d,\dotsc,\ell_1) \leq g(d;\ell_d,\dotsc,\ell_1) < \infty$.
\end{prop}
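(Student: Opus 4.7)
The plan is to prove both claims by induction on the pair $(d, \ell_d)$ in lexicographic order. Finiteness of $g(d; \ell_d, \dotsc, \ell_1)$ is immediate from Definition \ref{def:Geometric Dimension Bound}: each recursive call strictly decreases $(d, \ell_d)$, and the base leaves $g(1; \ell_1) = \ell_1$ and $g(2; 1, \ell_1) = 1 + \ell_1$ are finite. The base cases of the inequality $r(V) \leq g(V)$ are $d = 1$ (where $V$ is an intersection of hyperplanes defined over $K$ and thus has a $K$-rational point whenever $r \geq \ell_1$), $(d, \ell_d) = (2, 1)$ (where restricting to the intersection of the $\ell_1$ hyperplanes yields a quadric in a projective space of dimension at least one, solvable by a degree-two polynomial), and the case of a single hypersurface, handled by intersecting with a generic $K$-line.

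For the inductive step, fix $V$ of type $\lbm d & \cdots & 1 \\ \ell_d & \cdots & \ell_1 \rem$ in $\PP_K^r$ with $r \geq g(V)$, choose a reduction $V^{\red}$ with complement $V_d$ as in Definition \ref{def:Reduction and Complement}, and note that the type of $V^{\red}$ is strictly smaller than that of $V$ in our ordering. The inductive hypothesis applied to $V^{\red}$, combined with an auxiliary monotonicity of $g$ in its arguments (itself provable from the recursion by the same induction, giving $g(V^{\red}) \leq g(V)$), furnishes a point $P \in V^{\red}(K_1)$ with $\RD(K_1/K) \leq \RD(d)$. Next, choose a $K_1$-hyperplane $H \subseteq \PP_{K_1}^r$ not containing $P$ and set $W := H \cap \mcc(V^{\red}; P) \subseteq H \cong \PP_{K_1}^{r-1}$. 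The type of $W$ (as a subvariety of $\PP^{r-1}$) has strictly smaller $(d, \ell_d)$ than $V$, since $\ell_d$ drops by one when $\ell_d \geq 2$ and $d$ drops by one when $\ell_d = 1$. Moreover, Lemma \ref{lem:The Reduction Lemma} gives $g(W) = g(V) - 1 \leq r - 1$, so a second application of the inductive hypothesis yields $Q \in W(K_2)$ with $\RD(K_2/K_1) \leq \RD(d)$.

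Since $Q \in H$ but $P \notin H$ we have $Q \neq P$, so by Lemma \ref{lem:Bertini's Lemma for Intersections of Hypersurfaces} the line $\Lambda(P, Q)$ is contained in $V^{\red}$. Intersecting $\Lambda(P, Q) \cong \PP^1$ with the degree-$d$ hypersurface $V_d$ produces a zero-dimensional subscheme of degree $d$, so a further extension $K_3/K_2$ with $\RD(K_3/K_2) \leq \RD(d)$ contains a point of $V$. Lemma \ref{lem:Properties of Resolvent Degree} then gives $\RD(K_3/K) \leq \RD(d)$, completing the induction. The hard part will be the bookkeeping: verifying the auxiliary monotonicity of $g$ rigorously, confirming that the type of $\mcc(V^{\red}; P)$ together with the absorbed hyperplane $H$ matches precisely one step of the recursion defining $g(V)$ so that Lemma \ref{lem:The Reduction Lemma} delivers $g(W) = g(V) - 1$ on the nose, and handling the edge cases in which intermediate varieties collapse to single hypersurfaces (which fall back to the base cases).
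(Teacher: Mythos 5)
Your argument is essentially the paper's proof: both induct on $(d,\ell_d)$ (the paper does this as a nested induction, first on $d$ then on $\ell_d$), pass to a reduction $V^{\red}$ with complement $V_d$, find $P$ on $V^{\red}$ by the inductive hypothesis, find a second point $Q$ on $H \cap \mcc(V^{\red};P)$ for a hyperplane $H$ missing $P$, invoke Bertini (Lemma \ref{lem:Bertini's Lemma for Intersections of Hypersurfaces}) to put the line $\Lambda(P,Q)$ inside $V^{\red}$, solve the degree-$d$ equation $\Lambda(P,Q)\cap V_d$, and compose the tower via Lemma \ref{lem:Properties of Resolvent Degree}, with finiteness falling out of the terminating recursion. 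You have correctly flagged a point the paper leaves implicit: the step $r(V)\le\max\{g(V^{\red}),\,g(\mcc(V^{\red};P))+1\}=g(V)$ quietly uses the monotonicity $g(V^{\red})\le g(V)$, which does need its own (easy, by the same recursion) argument.
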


\begin{proof} \textbf{(The Geometric Obliteration Algorithm)}\\
	We proceed by induction on $d$. First, observe that when $d=1$, it is immediate that
	\begin{equation*}
		r(1;\ell_1) = \ell_1 = g(1;\ell_1). 
	\end{equation*} 

	\noindent	
	We additionally consider the case $d=2$ before considering the general case. For the $d=2$ case, we proceed via induction on $\ell_2$. When $\ell_2=1$, $\deg(V)=2$ and thus we can determine a point of $V$ by solving a quadratic polynomial when
	\begin{align*}
		\dim\lp V \rp \geq r - (\ell_1+1) = 0.
	\end{align*}
	
	\noindent
	It follows that
	\begin{align*}
		r(2;1,\ell_1) = \ell_1+1 = g(2;1,\ell_1).
	\end{align*}		
	
	\noindent
	 Now, consider the case where $\ell_2 \geq 2$ is arbitrary. Our inductive hypothesis yields 
\begin{equation*}
r(2;\ell_2-1,\lambda_1) \leq g(2;\ell_2-1,\lambda_1),
\end{equation*}	 

\noindent
for any $\lambda_1 \geq 0$. Let $V^\red$ be a reduction of $V$ with complement $V_2$. As $V^\red$ is of type $\lbm 2 &1\\ \ell_2-1 &\ell_1 \rem$, we can determine a point $P_0$ of $V^\red$ over an iterated quadratic extension whenever $r \geq g(V^\red)$. Let $H$ be a hypersurface which does not contain $P_0$. Note that $H \cap \mcc(V^\red;P_0)$ is of type $\lbm 2 &1\\ \ell_2-1 &\ell_2+\ell_1 \rem$ and so we can similarly determine a point $P_1$ of $H \cap \mcc(V^\red;P_0)$ over an iterated quadratic extension whenever $r \geq g(V^\red)+1$. From Lemma \ref{lem:Bertini's Lemma for Intersections of Hypersurfaces}, we have that
\begin{equation*}
	 \Lambda(P_0,P_1) \subseteq \mcc(V^\red;P_0) \subseteq V^\red. 
\end{equation*}

\noindent
Thus, we can determine a point of $\Lambda(P_0,P_1) \cap V_2 \subseteq V$ over an additional quadratic extension. From Lemma \ref{lem:The Reduction Lemma}, it follows that
\begin{equation*}
	r(2;\ell_2,\ell_1) \leq \max\lb g(2;\ell_2-1,\ell_1), g(2;\ell_2-1,\ell_1+\ell_2) \rb = g(2;\ell_2-1,\ell_1+\ell_2) = g(2;\ell_2,\ell_1).
\end{equation*}

Now, let us return to our induction on $d$ and consider the case of general $d \geq 2$. Our inductive hypothesis for $d$ yields that $r(d-1;\lambda_{d-1},\dotsc,\lambda_1) \leq g(d-1;\lambda_{d-1},\dotsc,\lambda_1)$ for any $\lambda_{d-1} \geq 1$ and $\lambda_j \geq 0$ for all $j \in [1,d-2]$. We proceed by induction on $\ell_d$. Let $V^\red$ be a reduction of $V$ with complement $V_d$. When $\ell_d=1$, the inductive hypothesis on $d$ yields that we can determine a point $P_0$ of $V^\red$ by solving polynomials of degree at most $d-1$ when $r \geq g(V^\red)$. Letting $H$ denote a hyperplane which does not contain $P_0$, we can similarly determine a point $P_1$ of $H \cap \mcc(V^\red;P_0)$ over by solving polynomials of degree at most $d-1$ when $r \geq g\lp \mcc(V^\red;P_0) \rp + 1$. It follows that
\begin{equation*}
	\Lambda(P_0,P_1) \subseteq \mcc(V^\red;P_0) \subseteq V^\red,
\end{equation*}

\noindent
and so we can determine a point of $\Lambda(P_0,P_1) \cap V_d \subseteq V$ by solving a degree $d$ polynomial. As a result,
\begin{equation*}
	r(d;1,\ell_{d-1},\dotsc,\ell_1) \leq \max\lb g(V^\red), g\lp \mcc(V^\red;P_0) \rp + 1 \rb = g\lp \mcc(V^\red;P_0) \rp + 1 = g(V) = g(d;1,\ell_{d-1},\dotsc,\ell_1).
\end{equation*}

Next, we consider the case of arbitrary $\ell_d \geq 2$. Our inductive hypothesis for $\ell_d$ yields that
\begin{equation*}
	r(d;\ell_d-1,\lambda_{d-1},\dotsc,\lambda_1) \leq g(d;\ell_d-1,\lambda_{d-1},\dotsc,\lambda_1),
\end{equation*}

\noindent
for all $\lambda_j \geq 0$, $j \in [1,d-1]$. As a result, we can determine a point $P_0$ of $V^\red$ by solving polynomials of degree at most $d$ when $r \geq g(V^\red)$. Taking $H$ to be a hyperplane which does not contain $P_0$, we can determine a point $P_1$ of $H \cap \mcc(V^\red;P_0)$ by solving polynomials of degree at most $d$ when $r \geq g\lp \mcc(V^\red;P_0) \rp + 1$. Therefore,
\begin{equation*}
	\Lambda(P_0,P_1) \subseteq \mcc(V^\red;P_0) \subseteq V^\red,
\end{equation*}

\noindent
and we can determine a point a point of $\Lambda(P_0,P_1) \cap V_d \subseteq V$ by solving an additional degree $d$ polynomial. Consequently,
\begin{equation*}
	r(d;\ell_d,\dotsc,\ell_1) \leq \max\lb g(V^\red), g\lp \mcc(V^\red;P_0) \rp + 1 \rb = g\lp \mcc(V^\red;P_0) \rp + 1 = g(V) = g(d;\ell_d,\dotsc,\ell_1).
\end{equation*}

Finally, we note that the polar cone construction introduces only finitely many hypersurfaces, all of which are strictly smaller degree. Consequently, iterating Lemma \ref{lem:The Reduction Lemma} yields that $g(d;\ell_d,\dotsc,\ell_1)$ is finite for every type $\lbm d &\cdots &1\\ \ell_d &\cdots &\ell_1 \rem$. 
\end{proof}

\subsection{Sylvester's Obliteration Algorithm}\label{subsec:Sylvester's Obliteration Algorithm}

In \cite{Sylvester1887}, Sylvester writes
\begin{quote}
	``In the following memoir I propose to present \emph{Hamilton's} process under what appears to me to be a clearer and more easily intelligible form, to extend his numerical results and to establish the principles of a more general method than that to which he has confined himself.''
\end{quote}

\noindent
We now propose to serve the analogous role for Sylvester that Sylvester served for Hamilton. Note that \cite{Sylvester1887} begins with a ``a somewhat more extended statement of the Law of Inertia (Tr\"{a}gheitsgesetz) for quadratic forms'' and provides a brief history of the theory of Tschirnhaus transformations, both of which we omit here. Sylvester's law of inertia is well-known (see \cite[Section 1]{Ostrowski1959}) and not necessary for our purposes. We refer the reader to \cite[Section 2 and  Appendix B]{Wolfson2021} for a more complete history of Tschirnhaus transformations.

Throughout this subsection, we consider a system $S = \lb f_1,\dotsc,f_s \rb$ of homogeneous polynomials. Given a solution $P_0$ of $S$, the ``first emanant'' \cite[p.471]{Sylvester1887} of $S$ at $P_0$ is
\begin{equation*}
	S(1;P_0) := \lb t(\ell,f_j,P_0) \st j \in [1,s], \ell \in [0,\deg(f_j)-1] \rb,
\end{equation*}
	
	\noindent
	where $t(\ell,f_j,P_0)$ is as in equation (\ref{eqn:jth polar of a polynomial}) of Definition \ref{def:Polars and Polar Cones}. Given a solution $P_1$ of $S(1;P_0)$, Sylvester's sub-lemma \cite[p.472]{Sylvester1887} states that any linear combination $\lambda_0 P_0 + \lambda_1 P_1$ (what he calls an ``alliance'' of $P_0$ and $P_1$) is a solution of $S(1;P_0)$, where $[\lambda_0:\lambda_1] \in \PP^1(K)$. Consequently, Sylvester says that $P_0$ and $P_1$ define a ``linear solution'' of $S(1;P_0)$ (and thus also of $S$, since $S \subseteq S(1;P_0)$).
	
	Note that the geometric version of Sylvester's sub-lemma \cite[p.472]{Sylvester1887} is Lemma \ref{lem:Bertini's Lemma for Intersections of Hypersurfaces}. The core algebraic computation reduces to the case of hypersurfaces; see Lemma 2.8 of \cite{Sutherland2021C}. Additionally, just as the second-named author constructs iterated polar cones in \cite{Sutherland2021C}, Sylvester analogously introduces ``$r^{th}$ emanants'' \cite[p.472]{Sylvester1887} and ``the Lemma'' \cite[p.472]{Sylvester1887} is the analogue of the polar point lemma (Lemma \ref{lem:Polar Point Lemma}). His proof follows from iterating the sublemma.
	
	Sylvester now focuses on linear solutions \cite[p.475]{Sylvester1887} of systems of equations. First, he introduces ``completed emanants'' \cite[p.475]{Sylvester1887} to ensure that $P_1$ is distinct from $P_0$ (and thus $P_0$ and $P_1$ determine a genuine linear solution). More specifically, a completed emanant is a system of equations $T = S(1;P_0) \cup \lb g \rb$, where $g$ is a homogeneous linear polynomials such that $g(P_0) \not= 0$. Next, let $S$ be of type $\lbm d &\cdots &1\\ \ell_d &\cdots &\ell_1 \rem$. Sylvester introduces notation \cite[p.475]{Sylvester1887} to denote the number of variables necessary to determine a linear solution of $S$. We modify his notation slightly for clarity and write $[d;\ell_d,\dotsc,\ell_1]$ instead of $[p,q,r,\dotsc,\eta,\theta]$. Note that
\begin{equation*}
	[d;\ell_d,\dotsc,\ell_1] = r\lp \mcc(\VV(S);P_0) \rp+1,
\end{equation*}

\noindent
for any $P_0 \in \VV(S)(K)$. It follows that Sylvester's formula of reduction \cite[p.475]{Sylvester1887} is
\begin{equation*}
	[d;\ell_d,\dotsc,\ell_1] \leq \left[d;\ell_d-1,\ell_d+\ell_{d-1},\dotsc,\SL_{j=2}^d \ell_j, \SL_{j=1}^d \ell_j \right] + 1,
\end{equation*}

\noindent
when $\ell_d \geq 2$. When $\ell_d=1$, let $d'$ be the largest $j \leq d-1$ such that $\ell_j$ is non-zero. Then, Sylvester's formula of reduction is
\begin{equation*}
[d;\ell_d,\dotsc,\ell_1] \leq \left[d';\ell_{d'},\ell_{d'}+\ell_{d'-1},\dotsc,\SL_{j=2}^{d'} \ell_j, \SL_{j=1}^{d'} \ell_j \right] + 1.
\end{equation*}

\noindent
Sylvester then claims the his formula of obliteration \cite[p.475]{Sylvester1887} without proof. We state his formula of obliteration and provide a proof, for the sake of completeness.

\begin{prop}\label{prop:Sylvester's Formula of Obliteration} \textbf{(Sylvester's Formula of Obliteration)}\\
Let $S$ be a system of homogeneous polynomials of type $\lbm d &\cdots &1\\ \ell_d &\cdots &\ell_1 \rem$ with $d \geq 2$ and $\ell_d \geq 2$. Then,
\begin{align*}
	[d;\ell_d,\dotsc,\ell_1] &\leq [d-1;\lambda_{d-1}, \lambda_{d-2},\dotsc,\lambda_2,\lambda_1] + \ell_d,\\
	&= [d-1;\lambda_{d-1}, \lambda_{d-2},\dotsc,\lambda_2,\lambda_1+\ell_d],
\end{align*}

\noindent
where
\begin{equation*}
	\lambda_{d-j} = \binom{\ell_d+j-1}{j} \frac{j\ell_d+1}{j+1} + \SL_{\nu=0}^{j-1} \binom{\ell_d+\nu-1}{\nu} \ell_{d-j+\nu}.
\end{equation*}
\end{prop}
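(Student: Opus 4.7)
The plan is to derive Sylvester's formula of obliteration by iterating Sylvester's formula of reduction a total of $\ell_d$ times, eliminating one degree-$d$ polynomial per step, and then verifying that the resulting lower-degree counts match the claimed closed forms for $\lambda_{d-j}$.

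Concretely, I would apply the $\ell_d \geq 2$ version of the formula of reduction $\ell_d - 1$ times, and then apply the $\ell_d = 1$ version once to clear the last degree-$d$ polynomial. Each application contributes $+1$ to the right-hand side, giving $+\ell_d$ in total. Writing $L_j^{(k)}$ for the number of degree-$j$ polynomials after $k$ iterations, the recurrence is $L_d^{(k)} = \ell_d - k$ together with $L_j^{(k+1)} = \sum_{i=j}^d L_i^{(k)}$ for $j < d$, or equivalently $L_j^{(k+1)} = L_{j+1}^{(k+1)} + L_j^{(k)}$, which is more convenient for induction. The legitimate iteration produces a bound whose counts are slightly smaller than the claimed $\lambda_{d-j}$ (the $\ell_d = 1$ step does not pick up a contribution from the remaining degree-$d$ polynomial). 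I would then use monotonicity of the bracket $[d-1;\cdot]$ in its polynomial-count arguments, together with the hyperplane identity for the $\ell_1$-slot, to relax this bound up to the claimed form. The second presentation in the statement is then a direct restatement of the first via the hyperplane identity.

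The technical heart of the proof is the closed-form identification of $L_{d-j}^{(\ell_d)}$ with $\lambda_{d-j}$, interpreting $L^{(\ell_d)}$ as the formal extrapolation of the $\ell_d \geq 2$ recurrence to step $\ell_d$. I would induct on $j$: for $j = 1$, telescoping the recurrence $L_{d-1}^{(k+1)} = (\ell_d - k) + L_{d-1}^{(k)}$ yields $L_{d-1}^{(\ell_d)} = \ell_{d-1} + \frac{\ell_d(\ell_d+1)}{2}$, which matches $\lambda_{d-1}$. For the inductive step, repeated application of $L_{d-j}^{(k+1)} = L_{d-j+1}^{(k+1)} + L_{d-j}^{(k)}$ expresses $L_{d-j}^{(\ell_d)}$ as a nested sum involving the inductively known $L_{d-j+1}^{(\cdot)}$ together with the initial value $\ell_{d-j}$. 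The multiset coefficients $\binom{\ell_d + \nu - 1}{\nu}$ appearing in $\lambda_{d-j}$ emerge naturally from iterated summation of polynomial expressions in $k$, while the leading factor $\binom{\ell_d+j-1}{j}\frac{j\ell_d+1}{j+1}$ arises from propagating the $L_d^{(k)} = \ell_d - k$ contributions through $j$ summation layers.

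The main obstacle will be the binomial identity in the inductive step, particularly verifying the coefficient $\frac{j\ell_d+1}{j+1}$ in the leading term. I expect this to require either a double induction on $(j,\ell_d)$ or a generating function argument, since direct manipulation of the nested binomial sums tends to become opaque; a clean approach may be to identify the leading term with a Raney-type combinatorial sum and then apply a known identity. Once this closed form is established, the remaining steps (monotonicity, hyperplane identity, and the passage between the two displayed forms of the conclusion) are essentially bookkeeping.
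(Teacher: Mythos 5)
Your plan captures the right structural idea — iterate the formula of reduction $\ell_d$ times and verify the resulting closed form — but the organization of the induction differs from the paper's, and the crucial identity is left unproved.

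The paper inducts on $\ell_d$. In the inductive step it applies the formula of reduction \emph{once} (going from $\ell_d$ to $\ell_d-1$ degree-$d$ forms), then invokes the inductive hypothesis on the system of type $\lbm d &d-1 &\cdots &1 \\ \ell_d-1 &\ell_d+\ell_{d-1} &\cdots &\lp\sum \ell_j\rp+1 \rem$ to obtain counts $\theta_{d-j}$, and finally verifies $\theta_{d-j} = \lambda_{d-j}$ by direct binomial manipulation. That verification reduces to a single telescoping step: after splitting off the $\ell_d$-contributions and using $\binom{\ell_d+j-2}{j}+\binom{\ell_d+j-2}{j-1}=\binom{\ell_d+j-1}{j}$, the two residual terms cancel exactly. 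You instead propose to unroll the full $\ell_d$-step recursion, introduce the array $L_j^{(k)}$, and then induct on $j$ to identify $L_{d-j}^{(\ell_d)}$ with $\lambda_{d-j}$. This is a genuinely different induction: you would have to solve a two-parameter recurrence and specialize at $k=\ell_d$, whereas the paper's single reduction-step plus inductive hypothesis sidesteps the nested sums entirely.

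The gap is that you never establish the identity you identify as ``the technical heart.'' You acknowledge that verifying the coefficient $\binom{\ell_d+j-1}{j}\frac{j\ell_d+1}{j+1}$ ``tends to become opaque'' and speculate about a double induction on $(j,\ell_d)$ or a Raney-type identity, but you do not carry any of these out. Without that verification the proposal is an outline, not a proof. There is also a small imprecision: the claimed equivalent form $L_j^{(k+1)} = L_{j+1}^{(k+1)} + L_j^{(k)}$ fails at $j = d-1$, since $L_d^{(k+1)} = L_d^{(k)}-1$ rather than $L_d^{(k)}$; this is a bookkeeping slip but it is exactly the sort of off-by-one that makes the nested-sum approach delicate. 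If you want to pursue your route, I would suggest abandoning the ``formal extrapolation'' device and instead adopting the paper's single-step induction on $\ell_d$, which makes the binomial identity tractable by hand.
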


\begin{proof}
It is straightforward to see that iteratively applying Sylvester's formula of reduction allows us to reduce to a system of equations of degree at most $d-1$. For the explicit numerics, we give a proof via induction on $\ell_d$. Note that to determine a linear solution of $S$, it suffices to determine a point solution of a completed emanant $T_0$ of $S$ at some point solution $P_0$. Additionally, we note that the type of $T_0$ is
\begin{equation*}
	\lbm d &d-1 &\cdots &2 &1\\ \ell_d &\ell_d+\ell_{d-1} &\cdots &\SL_{j=2}^d \ell_j &\lp \SL_{j=1}^d \ell_j \rp + 1 \rem.
\end{equation*}

Now, suppose that $\ell_d=1$. We can determine a point solution $P_1$ of $T_0$ by determining a linear solution of the subsystem $T_0'$, which is of type
\begin{equation*}
	\lbm d-1 &\cdots &2 &1\\ 1+\ell_{d-1} &\cdots &1+\SL_{j=2}^{d-1} \ell_j &\lp 1+\SL_{j=1}^{d-1} \ell_j \rp + 1 \rem.
\end{equation*}

\noindent
Futhermore, we see that
\begin{align*}
	\lambda_{d-j} = \binom{1+j-1}{j} \frac{j(1)+1}{j+1} + \SL_{\nu=0}^{j-1} \binom{1+\nu-1}{\nu} \ell_{d-j+\nu} = 1 + \SL_{\nu=0}^{j-1} \ell_{d-j+\nu} = 1 + \SL_{\mu=d-j}^{d-1} \ell_\mu,
\end{align*}

\noindent
so the claim holds when $\ell_d=1$. Now, consider the case where $\ell_d \geq 2$ is arbitrary. To determine a point solution of $T_0$, it suffices to determine a linear solution of a subsystem $T_0'$, which is of type
\begin{equation*}
	\lbm d &d-1 &\cdots &2 &1\\ \ell_d-1 &\ell_d+\ell_{d-1} &\cdots &\SL_{j=2}^d \ell_j &\lp \SL_{j=1}^d \ell_j \rp + 1 \rem .
\end{equation*}

\noindent
Thus,
\begin{align*}
	[d;\ell_d,\dotsc,\ell_1] \leq \left[d;\ell_d-1,(\ell_d+\ell_{d-1}),\dotsc,\lp \SL_{j=2}^d \ell_d \rp, \lp \SL_{j=1}^d \ell_j \rp + 1\right].
\end{align*}

\noindent
By induction, however, we have that
\begin{equation*}
	\left[d;\ell_d-1,(\ell_d+\ell_{d-1}),\dotsc,\lp \SL_{j=2}^d \ell_d \rp, \lp \SL_{j=1}^d \ell_j \rp + 1\right] \leq [d-1;\theta_{d-1},\dotsc,\theta_1+\ell_d],
\end{equation*}

\noindent
where
\begin{align*}
	\theta_{d-j} &= \binom{(\ell_d-1)+j-1}{j} \frac{j(\ell_d-1)+1}{j+1} + \SL_{\nu=0}^{j-1} \binom{(\ell_d-1)+\nu-1}{\nu} \lp \SL_{\mu=0}^j  \ell_{d-j+\mu}\rp, \\
	&= \binom{\ell_d+j-2}{j} \frac{j\ell_d-j+1}{j+1} + \SL_{\nu=0}^{j-1} \binom{\ell_d+\nu-2}{\nu} \lp \SL_{\mu=0}^j  \ell_{d-j+\mu}\rp.
\end{align*}

\noindent
Note that for each $\mu' \in [0,j-1]$, there are exactly $\mu'+1$ summands containing $\ell_{d-j+\mu'}$, namely
\begin{equation*}
	\binom{\ell_d-2}{0} \ell_{\mu'}, \binom{\ell_d-1}{1} \ell_{\mu'}, \dotsc, \binom{\ell_d + \mu'-2}{\mu'}\ell_{\mu'}.
\end{equation*}

\noindent
Additionally, there are exactly $j$ summands containing $\ell_d$, namely
\begin{equation*}
	\binom{\ell_d-2}{0} \ell_d, \binom{\ell_d-1}{1} \ell_d, \dotsc, \binom{\ell_d + j-3}{j-1}\ell_d.
\end{equation*}

\noindent
As a result,
\begin{align*}
	\theta_{d-j} &= \binom{\ell_d+j-2}{j} \frac{j\ell_d-j+1}{j+1} + \SL_{\nu'=0}^{j-1} \binom{\ell_d+\nu'-2}{\nu'}\ell_d +  \SL_{\mu_1=0}^{j-1} \lp \SL_{\mu_2=0}^{\mu_1} \binom{\ell_d+\mu_2-2}{\mu_2} \rp \ell_{d-j+\mu_1},\\
	&= \binom{\ell_d+j-2}{j} \frac{j\ell_d-j+1}{j+1} +  \binom{\ell_d+j-2}{j-1}\ell_d +  \SL_{\mu_1=0}^{j-1} \binom{\ell_d+\mu_1-1}{\mu_1} \ell_{d-j+\mu_1}.
\end{align*}

\noindent
Next, we see that
\begin{equation*}
	\binom{\ell_d+j-2}{j} \frac{j\ell_d-j+1}{j+1} = \binom{\ell_d+j-2}{j} \frac{j\ell_d+1}{j+1} - \binom{\ell_d+j-2}{j} \frac{j}{j+1},
\end{equation*}

\noindent
and
\begin{equation*}
	\binom{\ell_d+j-2}{j-1}\ell_d = \binom{\ell_d+j-2}{j-1} \frac{j\ell_d+1}{j+1} + \binom{\ell_d+j-2}{j-1} \frac{\ell_d-1}{j+1}.
\end{equation*}

\noindent
Noting that $\binom{\ell_d+j-2}{j} + \binom{\ell_d+j-2}{j-1}  = \binom{\ell_d+j-1}{j}$, it follows that
\begin{equation*}
	\theta_{d-j} = \binom{\ell_d+j-1}{j}\frac{j\ell_d+1}{j+1} + \binom{\ell_d+j-2}{j-1} \frac{\ell_d-1}{j+1} - \binom{\ell_d+j-2}{j} \frac{j}{j+1} + \SL_{\mu_1=0}^{j-1} \binom{\ell_d+\mu_1-1}{\mu_1} \ell_{d-j+\mu_1}.
\end{equation*}

\noindent
However,
\begin{align*}
\binom{\ell_d+j-2}{j-1} \frac{\ell_d-1}{j+1} - \binom{\ell_d+j-2}{j} \frac{j}{j+1} &= \frac{(\ell_d+j-2)!(\ell_d-1)}{(j-1)!(\ell_d-1)!(j+1)} - \frac{(\ell_d+j-2)!j}{j!(\ell_d-2)!(j+1)},\\
&= \frac{(\ell_d+j-2)!}{(j-1)!(\ell_d-2)!(j+1)} - \frac{(\ell_d+j-2)!}{(j-1)!(\ell_d-2)!(j+1)},\\
&= 0,
\end{align*}

\noindent
and thus
\begin{equation*}
	\theta_{d-j} = \binom{\ell_d+j-1}{j}\frac{j\ell_d+1}{j+1} + \SL_{\mu_1=0}^{j-1} \binom{\ell_d+\mu_1-1}{\mu_1} \ell_{d-j+\mu_1} = \lambda_{d-j},
\end{equation*}

\noindent
which proves the claim.
\end{proof}

Sylvester then applies his formula of obliteration to the question of determining non-zero solutions of equations which define the Tschirnhaus complete intersections $\tau_{1,\dotsc,m-1}$, including his Triangle of Obliteration. We omit his discussion here as the bounds he obtains are succeeded by the bounds of \cite{Brauer1975}, \cite{Wolfson2021}, \cite{Sutherland2021C}, and the next section.

\section{Upper Bounds on Resolvent Degree}\label{sec:Upper Bounds on Resolvent Degree}

\subsection{Previous Bounds}\label{subsec:Previous Bounds}

The current upper bounds on $\RD(n)$ were determined by the second-named author in \cite[Theorem 3.27]{Sutherland2021C}, which improved upon those of Wolfson \cite[Theorem 5.6]{Wolfson2021}. The general framework used by both the second-named author (with polar cones) and Wolfson (without polar cones) for constructing their respective bounding functions $G(m)$ and $F(m)$ was outlined in Remark \ref{rem:Strategy for Upper Bounds on RD(n)}. We define $G(m)$ below, but first we highlight the function's key properties (and recall that property 1, which both $F(m)$ and $G(m)$ share, is why we refer to $F(m)$ and $G(m)$ as bounding functions).

\begin{theorem}\label{thm:Sutherland2021C} \textbf{(Theorem 1.3 of \cite{Sutherland2021C})}\\
	The function $G(m)$ of \cite[Definition 3.26]{Sutherland2021C} has the following properties:
	\begin{enumerate}
		\item For each $m \geq 1$ and $n \geq G(m)$, $\RD(n) \leq n-m$.
		\item For each $d \geq 4$, $G(2d^2+7d+6) \leq \frac{(2d^2+7d+5)!}{d!}$. In particular, for $d \geq 4$ and $n \geq \frac{(2d^2+7d+5)!}{d!}$,
		\begin{equation*}
			\RD(n) \leq n-2d^2-7d-6.
		\end{equation*}
		\item For each $m \geq 1$, $G(m) \leq F(m)$ with equality only when $m \in \lb 1,2,3,4,5,15,16 \rb$ and
		\begin{align*}
			\lim\limits_{m \ra \infty} \frac{F(m)}{G(m)} = \infty.
		\end{align*}
	\end{enumerate}
\end{theorem}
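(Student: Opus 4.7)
The plan is to recall the definition of $G(m)$ from \cite[Definition 3.26]{Sutherland2021C} and verify each of the three listed properties via the strategy of Remark \ref{rem:Strategy for Upper Bounds on RD(n)}: produce a $K'$-rational point of $\tau_{1,\dotsc,m-1}^\circ$ over an extension $K'/K_n$ of controlled resolvent degree by first cutting out a large linear subspace of some $\tau_{1,\dotsc,d}^\circ$ with $d < m$, then finishing with a polynomial of degree $\frac{(m-1)!}{d!}$.

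First I would define $G(m)$ as the minimum, over feasible pairs $(d,k)$ with $d < m$ and $k = m-d-1$, of an explicit upper bound on the ambient dimension $n-1$ required for the existence of a $k$-polar point on $\tau_{1,\dotsc,d}^\circ \subseteq \PP_{K_n}^{n-1}$. Iterating Lemma \ref{lem:Polar Point Lemma} reduces the existence of such a $k$-polar point to finding a rational point of an intersection of hypersurfaces whose type is dictated by the iterated polar cone construction, and the sufficient dimension is then supplied by the Debarre-Manivel theorem \cite[Theorem 2.1]{DebarreManivel1998}. With $G$ in hand, property (1) is immediate: any $k$-plane $\Lambda \subseteq \tau_{1,\dotsc,d}^\circ$ reduces the remaining work to solving the residual degree-$\frac{(m-1)!}{d!}$ polynomial on $\Lambda$, after which the chain rule for resolvent degree (Lemma \ref{lem:Properties of Resolvent Degree}) yields $\RD(n) \leq n-m$ whenever $n \geq G(m)$.

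For property (2), I would substitute $m = 2d^2+7d+6$ into the recursive definition of $G$ with the parameters that optimize the Debarre-Manivel input, and extract the estimate $G(2d^2+7d+6) \leq \frac{(2d^2+7d+5)!}{d!}$ by a direct factorial calculation. For property (3), I would compare the recursions defining $G(m)$ and $F(m)$: Wolfson's $F(m)$ is built from a coarser construction that does not use polar cones (see \cite[Theorem 5.6]{Wolfson2021}), so a termwise comparison yields $G(m) \leq F(m)$, with equality precisely in the small cases $m \in \lb 1,2,3,4,5,15,16 \rb$ where both optimizations collapse to the same choice of $(d,k)$ (verifiable by direct enumeration). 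The divergence $\LL_{m \ra \infty} \frac{F(m)}{G(m)} = \infty$ then follows by comparing $F(m)$'s pure factorial growth with the sharper estimate $G(m) \lesssim \frac{(m-1)!}{d!}$ having $d \sim \sqrt{m/2}$ supplied by (2).

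The main obstacle is the combinatorial bookkeeping in the recursion defining $G$: one must track exactly how the type $\lbm d &\cdots &1\\ \ell_d &\cdots &\ell_1 \rem$ of the ambient intersection of hypersurfaces transforms under repeated polar cones, and verify that the Debarre-Manivel hypothesis continues to hold at every stage of the reduction, all while optimizing over $(d,k)$. This bookkeeping is exactly what distinguishes the finer $G$ from $F$ and what produces both the strict inequality and the asymptotic separation in (3). Since the statement here is a verbatim restatement of \cite[Theorem 1.3]{Sutherland2021C}, the concrete verification ultimately appeals to the detailed calculations carried out there; the role of the present paper is to improve upon $G$ by combining the polar cone machinery with Sylvester's obliteration algorithm.
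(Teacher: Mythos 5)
The paper does not prove this statement at all: Theorem \ref{thm:Sutherland2021C} is quoted verbatim from \cite[Theorem 1.3]{Sutherland2021C}, and the text immediately following Definition \ref{def:The Function G(m)} explicitly refers the reader to \cite[Section 3]{Sutherland2021C} for the construction and proof. Your proposal is a summary of what that cited source does, and you correctly flag at the end that the verification ultimately lives there, so there is no mismatch in approach.

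Two points where your sketch misdescribes the construction. First, you present $G(m)$ as defined uniformly by optimizing the Debarre--Manivel bound over pairs $(d,k)$ with $k = m-d-1$, but that is only the definition for $m \geq 15$; for $m \in [1,14]$, $G(m)$ is given by a lookup table (Definition \ref{def:The Function G(m)}), with $m \in [1,5]$ classical and $m \in [6,14]$ coming from explicit iterated-polar-cone constructions on $\tau_{1,\dotsc,4}^\circ$ rather than from \cite[Theorem 2.1]{DebarreManivel1998}. Second, property (3) — the equality set $\lb 1,2,3,4,5,15,16 \rb$ and the divergence of $F(m)/G(m)$ — is not something a ``termwise comparison'' of two recursions can deliver abstractly: it requires the explicit closed forms of both $F$ and $G$ and a finite computation to locate the exact equality cases, together with the asymptotic estimate in property (2) (which yields $G(m) = O\lp (m-1)!/d! \rp$ with $d \sim \sqrt{m/2}$) compared against the slower decay of the $d$-range in Wolfson's construction. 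Your sketch gestures at both but supplies neither, which is fine for a result the paper itself only cites, but would need to be filled in if this were an original theorem.
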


We will now numerically define $G(m)$ (which will require two additional functions) and then a summary of the construction of $G(m)$. We refer the reader to \cite[Section 3]{Sutherland2021C} for the full construction of $G(m)$ and proofs of the statements in Theorem \ref{thm:Sutherland2021C}. 

\begin{definition}\label{def:The Function G(m)}  \textbf{(The Function $G(m)$)}\\
We first define $\vartheta:\ZZ_{\geq 3} \times \ZZ_{\geq 1} \ra \ZZ_{\geq 1}$ so that $\vartheta(d,k)$ is the minimal $r \in \ZZ_{\geq 1}$ such that
	\begin{equation*}
		(k+1)(r-k) - \SL_{j=2}^d \binom{k+i}{i} \geq 0.
	\end{equation*}
	
	\noindent
	Explicitly, we have
	\begin{align*}
		\vartheta(d,k) = k + \left\lceil \frac{1}{k+1}\lp \binom{k+d+1}{d} - (k+2) \rp \right\rceil.
	\end{align*}
	
	\noindent
	Next, we define $\varphi:\ZZ_{\geq 15} \times \ZZ_{\geq 1} \ra \ZZ_{\geq 1}$ by
	\begin{equation*}
		\varphi(d,k) = \max\lb \frac{(d+k)!}{d!}, \binom{\vartheta(d,k)+d+1}{d} - (\vartheta(d,k)+1)^2 - (\vartheta(d,k)+d) \rb.
	\end{equation*}
	
	\noindent
	Finally, we define $G:\ZZ_{\geq 1} \ra \ZZ_{\geq 1}$. For $m \in [1,14]$, we define $G(m)$ by
	\begin{center}
		\begin{tabular}{|c|ccccc|ccccc|}
		\hline
		$m$ &1 &2 &3 &4 &5 &6 &7 &8 &9 &10\\
		\hline
		$G(m)$ &2 &3 &4 &5 &9 &21 &109 &325 &1681 &15121\\
		\hline
		\end{tabular}
		\end{center}
		
		\begin{center}
		\begin{tabular}{|c|cccc|}
		\hline
		$m$ &11 &12 &13 &14 \\
		\hline
		$G(m)$ &151,201 &1,663,201 &19,958,401 &259,459,201 \\
		\hline
		\end{tabular}
	\end{center}
	
	\noindent
	and for $m \geq 15$ by
	\begin{align*}
		G(m) = 1 + \min\lb \varphi(d,m-d-1) \st 4 \leq d \leq m-1 \rb.
	\end{align*}
\end{definition}

The values of $G(m)$ for $m \in [1,5]$ are classical and described in \cite[Appendix B]{Wolfson2021}. In \cite{Chebotarev1954}, Chebotarev gave an argument that $\RD(n) \leq n-6$ for $n \geq 21$, however his argument had a gap which was fixed by \cite[Theorem 3.7]{Sutherland2021C}. More specifically, Chebotarev (like Wiman before him in \cite{Wiman1927}) assumed certain intersections of hypersurfaces were generic without proof.

For $m \in [6,14]$, the second-named author determined $k$-polar points on extended Tschirnhaus complete intersections $\tau_{1,\dotsc,d}^\circ$ \cite[Theorems 3.7, 3.10]{Sutherland2021C}. However, the degrees of iterated polar cones grow exponentially and this method could not be further extended \cite[Remark 3.19]{Sutherland2021C}. For general $m$, the second-named author was able to improve on the bounds of Wolfson by using \cite[Theorem 2.1]{DebarreManivel1998} to minimize the ambient dimension required for Wolfson's algorithm \cite[Theorem 3.24]{Sutherland2021C}.

\subsection{New Bounds}\label{subsec:New Bounds}

We will now improve on $G(m)$ for $m \in [13,17] \cup [22,25]$. For $m \in [7,16]$, $G(m)$ is obtained by determining an $(m-5)$-plane on $\tau_{1,2,3,4}^\circ$. Additionally, for $m \in [17,24]$, $G(m)$ is obtained by determining an $(m-6)$-plane on $\tau_{1,2,3,4,5}^\circ$. Finally, for $m \in [25,33]$, $G(m)$ is obtained by determining an $(m-7)$-plane on $\tau_{1,2,3,4,5,6}^\circ$.

Our improvements will come from determining an $(m-6)$-plane on $\tau_{1,2,3,4,5}^\circ$ for $m \in [13,17]$ and from determining an $(m-7)$-plane on $\tau_{1,2,3,4,5,6}^\circ$ for $m \in [22,25]$. Note that in each of these cases, one can apply the geometric obliteration algorithm to obtain improved bounds. However, we will use a slight modification which allows for a minor optimization.

\begin{remark}\label{rem:A Modification of the Geometric Obliteration Algorithm} \textbf{(A Modification of the Geometric Obliteration Algorithm)}\\
	Let $V \subseteq \PP_K^r$ be an intersection of hypersurfaces of type $\lbm d &\cdots &1\\ \ell_d &\cdots &\ell_1 \rem$. Recall that successive uses of Proposition \ref{prop:The Obliteration Proposition} yield that
\begin{equation*}
	g(V) = g\lp V_1^\Syl \rp = \cdots = g\lp V_{d-3}^\Syl \rp = g\lp V_{d-2}^\Syl \rp,
\end{equation*}	

\noindent
and that $V_{d-2}^\Syl$ is an intersection of type $\lbm 2 &1\\ \lambda_2 &\lambda_1 \rem$. In the spirit of the obliteration algorithm, we could indeed continue to apply Lemma \ref{lem:The Reduction Lemma} until there is a single quadric left, at which point we need only solve a final quadratic polynomial.

However, we also note that $\deg\lp V_{d-2}^\Syl \rp$ is $2^{\lambda_2}$ and thus we can determine a point of $W_V$ by solving a polynomial of degree $2^{\lambda_2}$ whenever $r \geq \lambda_2 + \lambda_1$. Consequently, we obtain a slight improvement in the forthcoming bounds on $\RD(n)$ by reducing only to a $j^{th}$ partial Sylvester reduction of $V_{d-2}^\Syl$ for some $j < \lambda_2$ instead of $V_{d-1}^\Syl$. 
\end{remark}

\begin{definition}\label{def:Optimal Reduction of Tschirnhaus Complete Intersection}  \textbf{(Optimal Reduction of Tschirnhaus Complete Intersection)}\\
	For each $d \geq 3$ and $m \geq d+2$, consider
	\begin{equation*}
		W = \lp \mcc^{m-d-1}(\tau_{1,\dotsc,d};P_0,\dotsc,P_{m-d-2}) \rp_{d-2}^\Syl,
	\end{equation*}	

	\noindent	
	a $(d-2)^{nd}$ Sylvester reduction of an $(m-d-1)^{st}$ polar cone of $\tau_{1,\dotsc,d}$, which is of type $\lbm 2 &1\\ \lambda_2 &\lambda_1 \rem$. For each $j \in [1,\lambda_2-1]$, note that a $j^{th}$ partial Sylvester reduction $W^\Syl(2;j)$ of $W$ has type $\lbm 2 &1\\ \lambda_2 - j &\lambda_1 + \SL_{\nu=\lambda_2-j}^{\lambda_2-1} \nu \rem$. Further,
\begin{equation*}
\deg\lp W^\Syl(2;j) \rp = 2^{\lambda_2-j}.
\end{equation*}	

\noindent
For each such $j$, set
	\begin{equation*}
		\xi(m,d;j) := \max\lb (m-d+1)+\lp \lambda_2-j \rp + \lp \lambda_1 + \SL_{\nu=\lambda_2-j}^{\lambda_2-1} \nu \rp, 2^{\lambda_2-j}+1 \rb. 
	\end{equation*}
	
	\noindent
	The \textbf{optimal reduction bound of $\tau_{1,\dotsc,d}$ for $m$}, is
	\begin{align*}
		\Xi(m,d) := \min\lb \xi(m,d;j) \st j \in [0,\lambda_2-1] \rb.
	\end{align*}
\end{definition}

In particular, $\Xi(m,d)$ is defined exactly so that for $n \geq \Xi(m,d)$, we can determine an $(m-d-1)^{th}$ polar point of $\tau_{1,\dotsc,d}^\circ$ in $\PP_{K_n}^{n-1}$ over an extension $K'/K_n$ with $\RD(K'/K_n) \leq \RD(\Xi(m;d))$.

\begin{remark}\label{rem:Xi(m,d) is Non-Decreasing in m} \textbf{($\Xi(m,d)$ is Non-Decreasing in $m$)}\\
	Note that $\Xi(m,d)$ is non-decreasing in $m$ for fixed $d$. This can be seen geometrically from the fact if $(P_0,\dotsc,P_{m-d-1})$ is an $(m-d-1)^{st}$ polar point of $\tau_{1,\dotsc,d}$, then $(P_0,\dotsc,P_{m-d-2})$ must be an $(m-d-2)^{nd}$ polar point of $d$, so $\Xi(m,d) \geq \Xi(m-1,d)$.
\end{remark}

 We are now ready to state and prove the main theorem.

\begin{theorem}\label{thm:Bounds from the Geometric Obliteration Algorithm}  \textbf{(Bounds from the Geometric Obliteration Algorithm)}
\begin{enumerate}
	\item For $n \geq 5,250,198$, $\RD(n) \leq n-13$.
	\item For each $m \in [14,17]$ and $n > \frac{(m-1)!}{120}$, $\RD(n) \leq n-m$.
	\item For $n \geq 381,918,437,071,508,900$, $\RD(n) \leq n-22$. 
	\item For each $m \in [23,25]$ and $n > \frac{(m-1)!}{720}$, $\RD(n) \leq n-m$. 
\end{enumerate}
\end{theorem}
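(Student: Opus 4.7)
The plan is to apply the strategy from Remark \ref{rem:Strategy for Upper Bounds on RD(n)}: for each $m$ in question (with $d=5$ for cases (1) and (2), and $d=6$ for cases (3) and (4)), we aim to determine an $(m-d-1)$-polar point of $\tau_{1,\dotsc,d}^\circ \subseteq \PP_{K_n}^{n-1}$ over an extension of $K_n$ of controlled resolvent degree. By the Polar Point Lemma (Lemma \ref{lem:Polar Point Lemma}), such a polar point produces an $(m-d-1)$-plane $\Lambda \subseteq \tau_{1,\dotsc,d}^\circ$. Restricting the remaining equations $b_{d+1},\dotsc,b_{m-1}$ to $\Lambda$ then yields a zero-dimensional scheme of degree at most $(m-1)!/d!$, so a $K_n$-rational point of $\tau_{1,\dotsc,m-1}^\circ$ can be obtained by solving a single polynomial of that degree via Lemma \ref{lem:Properties of Resolvent Degree}, giving $\RD(n) \leq n-m$.

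To construct the $(m-d-1)$-polar point, we proceed inductively on $\ell \in [0,m-d-2]$: given $P_0,\dotsc,P_{\ell-1}$, we apply the geometric obliteration algorithm of Subsection \ref{subsec:The Geometric Obliteration Algorithm} to $\mcc^\ell(\tau_{1,\dotsc,d};P_0,\dotsc,P_{\ell-1})$. Iterating Proposition \ref{prop:The Obliteration Proposition} (via Remark \ref{rem:Geometric Dimension Bound via Obliteration}) reduces the task to finding a rational point on a $(d-2)^{\text{nd}}$ Sylvester reduction, an intersection of type $\lbm 2 &1\\ \lambda_2 &\lambda_1 \rem$. Rather than fully obliterate the quadrics, we apply the modification of Remark \ref{rem:A Modification of the Geometric Obliteration Algorithm}: take a $j^{\text{th}}$ partial Sylvester reduction and solve one polynomial of degree $2^{\lambda_2-j}$, optimizing $j \in [0,\lambda_2-1]$ to obtain the bound $\Xi(m,d)$ of Definition \ref{def:Optimal Reduction of Tschirnhaus Complete Intersection}. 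For $n \geq \Xi(m,d)$, this procedure succeeds with resolvent-degree cost $\leq \RD(\Xi(m,d))$, which combined with the final polynomial of degree $(m-1)!/d!$ delivers $\RD(n) \leq n-m$.

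The hard part is the explicit numerical evaluation of $\Xi(m,d)$ for each of the eight pairs $(m,d) \in \lb (m,5) \st m \in [13,17] \rb \cup \lb (m,6) \st m \in [22,25] \rb$. Starting from the type of $\tau_{1,\dotsc,d}$, each polar cone iteration transforms the type per Definition \ref{def:Polar Cone of an Intersection of Hypersurfaces}, and the subsequent Sylvester reductions of Definition \ref{def:Sylvester Reductions} follow the combinatorics recorded in Proposition \ref{prop:Sylvester's Formula of Obliteration}. Because the $\ell_k$ grow very rapidly under these operations, hand computation is infeasible, and the Python implementation described in Section \ref{sec:Upper Bounds on Resolvent Degree} is used to perform the bookkeeping. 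The explicit thresholds $5{,}250{,}198$ and $381{,}918{,}437{,}071{,}508{,}900$ in cases (1) and (3) emerge as the values $\Xi(13,5)$ and $\Xi(22,6)$ obtained in this way.

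Combining $\Xi(m,d)$ with the final factorial step gives the overall threshold $n \geq \max\lb \Xi(m,d), \tfrac{(m-1)!}{d!}+1 \rb$. For $m = 13$ (respectively $m = 22$), $\Xi(m,d)$ dominates, producing cases (1) and (3). For $m \in [14,17]$ (respectively $m \in [23,25]$), the factorial term $(m-1)!/d!$ overtakes $\Xi(m,d)$, giving the cleaner thresholds $n > (m-1)!/120$ and $n > (m-1)!/720$ in cases (2) and (4); the crossover is verified case-by-case from the same computations, and Remark \ref{rem:Xi(m,d) is Non-Decreasing in m} ensures $\Xi(m,d)$ is non-decreasing in $m$ for fixed $d$, so the stated thresholds hold uniformly across each range.
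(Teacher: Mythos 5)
Your proposal follows essentially the same approach as the paper: build an $(m-d-1)$-polar point of $\tau_{1,\dotsc,d}^\circ$ using the geometric obliteration algorithm with the quadric-truncation optimization of Remark~\ref{rem:A Modification of the Geometric Obliteration Algorithm}, invoke the Polar Point Lemma to get an $(m-d-1)$-plane, finish by solving a degree $(m-1)!/d!$ polynomial, and take the threshold to be $\max\lb \Xi(m,d), (m-1)!/d!+1\rb$, with the crossover between the two quantities checked by the same Python computation. One small indexing slip: you run the inductive construction over $\ell \in [0,m-d-2]$, which only produces $P_0,\dotsc,P_{m-d-2}$ (an $(m-d-2)$-polar point); you need $\ell$ to run to $m-d-1$ so that the final step finds $P_{m-d-1}$ inside $\mcc^{m-d-1}(\tau_{1,\dotsc,d}^\circ;P_0,\dotsc,P_{m-d-2})$, exactly the step whose cost is bounded by $\Xi(m,d)$; the paper also takes care to intersect with a hyperplane avoiding $[1:0:\cdots:0]$ so that the construction stays inside $\tau^\circ$ rather than $\tau$. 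Neither point changes the substance of the argument.
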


\begin{proof}
We continue to use the notation established in Definition \ref{def:Optimal Reduction of Tschirnhaus Complete Intersection}. For each $m \in [13,17]$, we set
\begin{equation*}
	G'(m) = \max\lb \Xi(m,5), \frac{(m-1)!}{120}+1 \rb,
\end{equation*}

\noindent
and for each $m \in [22,25]$, we set
\begin{equation*}
	G'(m) = \max\lb \Xi(m,6), \frac{(m-1)!}{720}+1 \rb. 
\end{equation*}

\noindent
In each case, it suffices to show the claim when $n=G'(m)$. Further, note that $G'(m) = \Xi(m,5)$ exactly when $m=13$ and $G'(m)=\Xi(m,6)$ exactly when $m=22$; this claim is justified by explicit computation and is given in the tables at the end of the proof. Recall that the space of Tschirnhaus transformations up to re-scaling is $\PP_{K_{G'(m)}}^{G'(m)-1}$.

Let us first consider the case of $m \in [13,17]$ and let $H \subseteq \PP_{K_n}^{G'(m)-1}$ be a hyperplane which does not contain $[1:0:\cdots:0]$. Note that $H \cong \PP_{K_n}^{G'(m)-2}$ and $H \cap \tau_{1,\dotsc,5} = H \cap \tau_{1,\dotsc,5}^\circ$. Since $\Xi(m,5) \geq \Xi(m-1,5)$, we can assume that we have an $(m-7)$-polar point $(P_0,\dotsc,P_{m-7})$ of $H \cap \tau_{1,\dotsc,5}^\circ$. Consider the minimal $j$ such that $\Xi(m,5) = \xi(m,5;j)$. By definition of $\xi(m,5;j)$, we have that
\begin{equation*}
	\dim\lp \lp \lp \mcc^{m-6}(H \cap \tau_{1,\dotsc,5}^\circ;P_0,\dotsc,P_{m-7}) \rp_3^\Syl \rp^\Syl(2;j) \rp \geq m-6.
\end{equation*}

\noindent
Since $\dim\lp \Lambda(P_0,\dotsc,P_{m-7}) \rp = m-7$, we can determine a point of
\begin{equation*}
	\mcc^{m-6}\lp H \cap \tau_{1,\dotsc,5}^\circ;P_0,\dotsc,P_{m-7} \rp  \setminus \Lambda\lp P_0,\dotsc,P_{m-7} \rp,
\end{equation*}

\noindent
by solving a polynomial of degree at most $\Xi(m;5)$. By construction $\lp P_0,\dotsc,P_{m-6} \rp$ is an $(m-6)$-polar point and Lemma \ref{lem:Polar Point Lemma} yields that $\Lambda = \Lambda(P_0,\dotsc,P_{m-6}) \subseteq \tau_{1,\dotsc,5}^\circ$ is an $(m-6)$-plane. We can then determine a point of $\Lambda \cap \tau_{1,\dotsc,m-1}^\circ$ by solving a polynomial of degree $\frac{(m-1)!}{120}$.

We now consider the similar case of $m \in [22,25]$. Let $H \subseteq \PP_{K_n}^{G'(m)-1}$ be a hyperplane which does not contain $[1:0:\cdots:0]$. Note that $H \cong \PP_{K_n}^{G'(m)-2}$ and $H \cap \tau_{1,\dotsc,5} = H \cap \tau_{1,\dotsc,5}^\circ$. Since $\Xi(m,6) \geq \Xi(m-1,6)$, we can assume that we have an $(m-8)$ polar point $(P_0,\dotsc,P_{m-8})$ of $H \cap \tau_{1,\dotsc,6}^\circ$. Consider the minimal $j$ such that $\Xi(m,6) = \xi(m,6;j)$. Observe that
\begin{equation*}
	\dim\lp \lp \lp \mcc^{m-7}(H \cap \tau_{1,\dotsc,6}^\circ;P_0,\dotsc,P_{m-8}) \rp_4^\Syl \rp^\Syl(2;j) \rp \geq m-7,
\end{equation*}

\noindent
and so we can determine a point $P_{m-6}$ of
\begin{equation*}
	\mcc^{m-7}\lp H \cap \tau_{1,\dotsc,6}^\circ;P_0,\dotsc,P_{m-8} \rp \setminus \Lambda\lp P_0,\dotsc,P_{m-8} \rp,
\end{equation*}

\noindent
by solving a polynomial of degree at most $\Xi(m;6)$. it follows that $\lp P_0,\dotsc,P_{m-7}\rp$ is an $(m-7)$-polar point of $\tau_{1,\dotsc,6}^\circ$ and so $\Lambda = \Lambda(P_0,\dotsc,P_{m-7}) \subseteq \tau_{1,\dotsc,6}^\circ$ is an $(m-7)$-plane. Consequently, we can determine a point of $\Lambda \cap \tau_{1,\dotsc,m-1}^\circ$ by solving a polynomial of degree $\frac{(m-1)!}{720}$.

We now show that $G'(m) = \Xi(m,5)$ exactly when $m=13$ and $G'(m)=\Xi(m,6)$ exactly when $m=22$. In the following tables, we note the values of $\Xi(m,5)$ and $\frac{(m-1)!}{120}+1$ for $m \in [13,17]$ and the approximate values of $\Xi(m,6)$ and $\frac{(m-1)!}{720}+1$ for $m \in [22,25]$. The exact values of $\Xi(m,5)$ for $m \in [13,17]$ and of $\Xi(m,6)$ for $m \in [22,25]$ were computed using Algorithm \ref{alg:The Geometric Obliteration Algorithm Applied to Tschirnhaus Complete Intersections}, which can be found in Subsection \ref{subsec:Appendix D - The Geometric Obliteration Algorithm Applied to Polar Cones of Tschirnhaus Complete Intersections}.

\begin{center}

\begin{tabular}{cc}

\begin{tabular}{|c|c|c|}
\hline
$m$ &$\Xi(m,5)$ &$\frac{(m-1)!}{120}+1$ \\
\hline
13 &5,250,198 &3,991,681 \\
\hline
14 &12,253,482 &51,891,841 \\
\hline
15 &26,357,165 &726,485,761 \\
\hline
16 &53,008,668 &10,897,286,401 \\
\hline
17 &100,769,994 &174,356,582,401 \\
\hline
\end{tabular}

&
\begin{tabular}{|c|c|c|}
\hline
$m$ &$\Xi(m,6)$ &$\frac{(m-1)!}{720}+1$\\
\hline
22 &$\sim 3.819 \times 10^{17}$ &$\sim 7.096 \times 10^{16}$ \\
\hline
23 &$\sim 9.526 \times 10^{17}$ &$\sim 1.561 \times 10^{18}$ \\
\hline
24 &$\sim 2.262 \times 10^{18}$ &$\sim 3.591 \times 10^{19}$ \\
\hline
25 &$\sim 5.137 \times 10^{18}$ &$\sim 8.617 \times 10^{20}$ \\
\hline
\end{tabular}

\end{tabular}

\end{center}
\end{proof}

\subsection{Obstruction to Further Bounds via the Geometric Obliteration Algorithm}\label{subsec:Obstruction to Further Bounds via The Geometric Obliteration Algorithm}

Unfortunately, the proof strategy of Theorem \ref{thm:Bounds from the Geometric Obliteration Algorithm} does not yield further bounds on $\RD(n)$. Recall that for $m \geq 15$, $G(m)$ is defined by
\begin{equation*}
	G(m) = 1 + \min\lb \varphi(d,m-d-1) \st d \in [4,m-1] \rb,
\end{equation*}

\noindent
where
\begin{equation*}
	\varphi(d,k) = \max\lb \frac{(d+k)!}{d!}, \binom{\vartheta(d,k)+d+1}{d} - ( \vartheta(d,k)+1 )^2 - (\vartheta(d,k)+d) \rb.
\end{equation*}

\noindent
For each $d$, the values of $m$ for which $G(m) = 1 + \varphi(d,m-d-1)$ is a set of consecutive integers. Equivalently, there are positive integers $m_d$ and $m_d'$ such that $G(m) = 1 + \varphi(d,m-d-1)$ if and only if $m \in \left[ m_d,m_d' \right]$; see \cite[Lemma 3.33]{Sutherland2021C} for details. 

Similarly, we briefly introduce the notation
\begin{equation*}
	\varrho(d,k) = \max\lb \Xi(d+k+1,d), \frac{(d+k)!}{d!}+1 \rb
\end{equation*}

\noindent
for $d \geq 4$ and $k \geq 1$, as well as
\begin{equation*}
	H(m) = \min\lb \varrho(d,m-d-1) \st d \in [4,m-1] \rb
\end{equation*}

\noindent
for $m \geq 13$. For fixed $d$, note that $\Xi(m,d)$ is a polynomial in $m$, whereas $\frac{(d+k)!}{d!} = \frac{(m-1)!}{d!}$ grows factorially. It follows that for each $d$, there are positive integers $M_d$ and $M_d'$ such that $H(m) = \varrho(d,m-d-1)$ if and only if $m \in [M_d,M_d']$. 

In the following table, we compare the values $m_d$ and $M_d$ for $d=5,6,7,8$. 

\begin{center}
\begin{tabular}{|c|c|c|}
\hline
$d$ &$m_d$ &$M_d$\\
\hline
$5$ &17 &13 \\
\hline 
$6$ &25 &22 \\
\hline
$7$ &34 &41 \\
\hline
$8$ &44 &78 \\
\hline
\end{tabular}
\end{center}

This provides further evidence, along with \cite[Remark 3.19]{Sutherland2021C}, that iterated polar cone methods are most effective for intersections of hypersurfaces of small types. Next, we determine an explicit lower bound on $\Xi(m,d)$. 

\begin{lemma}\label{lem:Lower Approximation} \textbf{(Lower Approximation)}\\
	Let $V \subseteq \PP_K^r$ be an intersection of hypersurfaces of type $\lbm d\\ \ell_d \rem$ with $d \geq 3$ and $\ell_d \geq 2$. Denote the type of a $(d-2)^{nd}$ Sylvester reduction $V_{d-2}^\Syl$ by $\lbm 2 &1\\ \lambda_2 &\lambda_1 \rem$. Then, 
	\begin{align*}
		\lambda_1 \geq \lambda_2 \geq \left\lceil 2^{5-2d} \lp \ell_d-1 \rp^{2d-4} \right\rceil.
	\end{align*}
\end{lemma}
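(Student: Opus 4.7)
The proof strategy is to translate partial Sylvester reductions into a clean numerical recursion on slot-count vectors, and then bound $\lambda_2$ from below by downward induction on the top degree. Starting from Lemma~\ref{lem:The Reduction Lemma} and adjoining the auxiliary hyperplane, a single partial Sylvester reduction applied at top degree $s$ to an intersection of type $(a_s, a_{s-1}, \ldots, a_1)$ produces the new type
\begin{equation*}
	a_i^{\mathrm{new}} = -1 + \SL_{j=i}^{s} a_j \quad (i \in [2,s]), \qquad a_1^{\mathrm{new}} = \SL_{j=1}^{s} a_j.
\end{equation*}
Let $N_k$ denote the number of degree-$k$ hypersurfaces in the Sylvester reduction $V_{d-k}^{\Syl}$, so that $N_d = \ell_d$. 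Iterating this update $N_s$ times as one passes from $V_{d-s}^{\Syl}$ to $V_{d-s+1}^{\Syl}$, the change in slot $s-1$ telescopes to $\SL_{t=0}^{N_s - 1}(N_s - 1 - t) = \binom{N_s}{2}$, yielding the key recursion $N_{k-1} \geq \binom{N_k}{2}$.

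I then plan to show, by downward induction on $k \in [2, d]$, that $N_k \geq (\ell_d - 1)^{2(d-k)}/2^{2(d-k)-1}$; specialization at $k = 2$ recovers the claimed bound $\lambda_2 \geq 2^{5-2d}(\ell_d - 1)^{2d - 4}$. The base cases $k = d$ and $k = d - 1$ are immediate (the latter from $N_{d-1} = \binom{\ell_d}{2} \geq (\ell_d - 1)^2/2$), and the inductive step combines $N_{k-1} \geq \binom{N_k}{2}$ with the inductive hypothesis, reducing to a numerical inequality that propagates cleanly for $\ell_d \geq 4$. For the edge cases $\ell_d \in \{2, 3\}$, the real-valued lower bound is at most a small constant and the ceiling collapses to $\lambda_2 \geq 1$; this follows from the fact that slot~$2$ becomes positive after the first Sylvester reduction (already $a_2 = \ell_d - 1 \geq 1$ there) and is nondecreasing under any subsequent partial reduction at top degree $\geq 3$, since then $a_2^{\mathrm{new}} - a_2 = -1 + \SL_{j \geq 3} a_j \geq 0$.

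Finally, the inequality $\lambda_1 \geq \lambda_2$ drops out of the update formulas: $a_1^{\mathrm{new}} - a_2^{\mathrm{new}} = a_1 + 1 \geq 1$, so starting from the initial state $(a_1, a_2) = (0,0)$, slot~$1$ strictly exceeds slot~$2$ after the first partial reduction, and this margin is preserved at every subsequent step. The main obstacle I anticipate is closing the inductive step of the $N_k$ bound cleanly: the estimate $\binom{N_k}{2} \approx N_k^2/2$ has just enough slack to propagate the claimed exponent $2(d-k)$, but is tight at the boundary where $N_k - 1$ cannot be replaced by $N_k/2$. Splitting on $\ell_d$ (namely $\ell_d \in \{2, 3\}$ versus $\ell_d \geq 4$) and handling the first two inductive steps via the explicit closed-form values $N_{d-1} = \binom{\ell_d}{2}$ and $N_{d-2} = \ell_d(\ell_d-1)(\ell_d+1)(3\ell_d+2)/24$ (which one derives from the same partial-reduction recursion, now applied to slot $s-2$) should suffice to start the generic induction with the needed slack.
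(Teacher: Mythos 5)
Your proposal matches the paper's argument in all essentials: a recursion on the top-slot counts $N_k$ of the successive Sylvester reductions followed by downward induction, with $\lambda_1 \geq \lambda_2$ falling out of the update formula. The one thing you misjudge is the source of the tightness you flag. Your recursion $N_{k-1} \geq \binom{N_k}{2}$ keeps only the telescoped contribution and discards the $(k-1)$-slot count already present in $V_{d-k}^{\Syl}$ before the degree-$k$ obliteration begins. But your own update formula shows the slot counts are nonincreasing in degree after any partial reduction, since $a_i^{\mathrm{new}} - a_{i'}^{\mathrm{new}} = \SL_{j=i}^{i'-1} a_j \geq 0$ for $2 \leq i < i'$; so for $k \leq d-1$ that $(k-1)$-slot of $V_{d-k}^{\Syl}$ is already $\geq N_k$, and the recursion sharpens to
\begin{equation*}
	N_{k-1} \;\geq\; N_k + \binom{N_k}{2} \;=\; \binom{N_k+1}{2} \;\geq\; \frac{N_k^2}{2}.
\end{equation*}
This is what the paper's ``the same argument yields'' tacitly uses, and with it the inductive step reduces to checking $\bigl((\ell_d-1)/2\bigr)^{2(d-k-1)} \geq 1$, which closes cleanly for all $\ell_d \geq 3$ with no need for the explicit closed forms of $N_{d-1}$ and $N_{d-2}$. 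Your direct handling of $\ell_d = 2$ via the ceiling and the monotonicity of slot~$2$ is correct (one small slip: $a_2 = \ell_d - 1$ holds after the first \emph{partial} Sylvester reduction, not after the full reduction $V_1^{\Syl}$). Finally, it is worth noting that iterating $N_{k-1} \geq N_k^2/2$ literally yields the stronger, doubly-exponential bound $\lambda_2 \geq 2^{1-2^{d-2}}(\ell_d-1)^{2^{d-2}}$; the stated exponent $2d-4$ is a weaker linear form that the paper's terse ``proceeding similarly'' glosses over, and your formulation actually makes this more visible.
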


\begin{proof}
Note that the number of degree $d-1$ hypersurfaces of $V_1^\Syl$ is
\begin{equation*}
	\theta_{d-1} = \SL_{j=1}^{\ell_d-1} \ell_d-j = \frac{1}{2}(\ell_d-1)\ell_d \geq \left\lceil \frac{1}{2}(\ell_d-1)^2 \right\rceil.
\end{equation*}

\noindent
The same argument yields that the number of degree $d-2$ hypersurfaces of $V_2^\Syl$ is
\begin{equation*}
	\theta_{d-2} \geq \left\lceil \frac{1}{2}\left\lceil \frac{1}{2}(\ell_d-1)^2 \right\rceil^2 \right\rceil \geq \left\lceil 2^{-3} (\ell_d-1)^4 \right\rceil.
\end{equation*}

\noindent
Proceeding similarly, we see that
\begin{equation*}
	\lambda_2 = \theta_2 \geq \left\lceil 2^{5-2d} \lp \ell_d-1 \rp^{2d-4} \right\rceil.
\end{equation*}

\noindent
Finally, note that $\lambda_1 \geq \lambda_2$ follows immediately from the polar cone construction.
\end{proof}

\begin{corollary}\label{cor:Lower Bound for Xi(m,d)}  \textbf{(Lower Bound for $\Xi(m,d)$)}\\
	Let $d \geq 4$ and $m \geq d+2$. Then,
	\begin{equation*}
		\Xi(m,d) \geq \left\lceil 4 \lp \frac{m-d-1}{2} \rp^{2d-4} \right\rceil.
	\end{equation*}
\end{corollary}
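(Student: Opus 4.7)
The plan is to establish that $\Xi(m,d) \geq \binom{\lambda_2}{2}$ up to lower-order terms, and then lower-bound $\lambda_2$ via Lemma~\ref{lem:Lower Approximation} applied to the first Sylvester reduction of the iterated polar cone of $\tau_{1,\dotsc,d}$.

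To establish the first step, I would unpack $\Xi(m,d) = \min_j \xi(m,d;j)$. Setting $k = \lambda_2 - j$ and using the identity $\sum_{\nu=k}^{\lambda_2-1}\nu = \binom{\lambda_2}{2} - \binom{k}{2}$, $\xi(m,d;j)$ rewrites as
\begin{equation*}
\max\lb (m-d+1) + k + \lambda_1 + \binom{\lambda_2}{2} - \binom{k}{2},\ 2^k + 1 \rb.
\end{equation*}
The first argument is concave quadratic in $k$ and dominates the exponential second argument for small $k$; the two balance at $k_0 \in O(\log m)$, at which $\binom{k_0}{2}$ is negligible, so $\Xi(m,d) \geq \lambda_1 + \binom{\lambda_2}{2} - O(\log^2 m)$, and in particular $\Xi(m,d) \geq \binom{\lambda_2}{2}$ asymptotically.

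For the second step, I observe that the $(m-d-1)^{st}$ polar cone of $\tau_{1,\dotsc,d}$ has exactly $\binom{m-e-1}{m-d-1}$ hypersurfaces of each degree $e \in [1,d]$; in particular, $1$ of degree $d$ and $m-d$ of degree $d-1$. The first Sylvester reduction removes the unique degree-$d$ hypersurface and leaves top degree $d-1$ with count $m-d$. Since extra lower-degree hypersurfaces in the type only increase $\lambda_2$, Lemma~\ref{lem:Lower Approximation} applied with $d \mapsto d-1$ and $\ell_d \mapsto m-d$ yields $\lambda_2 \geq \lceil 2^{7-2d}(m-d-1)^{2d-6} \rceil$. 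Combining, $\binom{\lambda_2}{2} \geq \lambda_2^2/4 \geq 2^{12-4d}(m-d-1)^{4d-12}$, which for $d \geq 5$ has exponent $4d-12 > 2d-4$ and implies the corollary immediately.

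The hard part will be the boundary case $d = 4$, where the exponents coincide ($4d-12 = 2d-4$) and a direct application of Lemma~\ref{lem:Lower Approximation} falls short of the required constant. For this case, I would compute $\lambda_2$ exactly via the Sylvester-reduction recursion $\ell_{d-j-1}^{(j+1)} = \binom{\ell_{d-j}^{(j)}}{2} + \ell_{d-j-1}^{(j)}$ induced by Definition~\ref{def:Geometric Dimension Bound}, obtaining $\lambda_2 = (m-4)(m-3) \geq (m-5)^2$ for $m \geq 5$, from which $\binom{\lambda_2}{2} \geq (m-5)^4/3 \geq 4((m-5)/2)^4$. Similar direct computations would handle any small-$m$ edge cases for $d \geq 5$.
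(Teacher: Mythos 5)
Your approach is genuinely different from the paper's, and it is actually more careful than the paper at the crucial step. The paper's proof uses only the elementary observation that $\xi(m,d;j) \geq \lambda_1 + \lambda_2 \geq 2\lambda_2$, and then asserts that $\lambda_2 \geq \lceil 2^{5-2d}(m-d-1)^{2d-4}\rceil$ ``follows from Lemma \ref{lem:Lower Approximation}.'' But, as you correctly note, the first Sylvester reduction has top degree $d-1$, not $d$, with $m-d$ top-degree hypersurfaces, so the lemma with the shift $d\mapsto d-1$, $\ell_d\mapsto m-d$ delivers only $\lambda_2 \geq \lceil 2^{7-2d}(m-d-1)^{2d-6}\rceil$. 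The stronger bound the paper writes does not hold: for $d=4$, $m=10$ one finds $\lambda_2 = (m-4)(m-3) = 42$, which is strictly less than $\lceil 2^{-3}\cdot 5^4\rceil = 79$. Your replacement of $\lambda_1+\lambda_2$ by the much larger $\binom{\lambda_2}{2}$ is precisely what recovers the missing exponent (squaring $2d-6$ gives $4d-12 \geq 2d-4$ for $d\geq 4$), so this is a genuine repair, not just a stylistic variant.

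That said, the proposal as written leaves real gaps. First, ``$\Xi(m,d) \geq \binom{\lambda_2}{2}$ asymptotically, with an $O(\log^2 m)$ correction'' is not a proof of an inequality asserted for every $m\geq d+2$: you would need something concrete, e.g.\ $\Xi(m,d) \geq \lambda_1 + \binom{\lambda_2}{2} - \binom{k_0}{2}$ with $k_0 = \lceil \log_2(\lambda_1 + \binom{\lambda_2}{2})\rceil$, together with an explicit check that $\lambda_1 \geq \binom{k_0}{2}$ in the relevant range. Second, ``implies the corollary immediately'' for $d\geq 5$ is not right: the inequality $2^{12-4d}(m-d-1)^{4d-12} \geq 4\lp (m-d-1)/2 \rp^{2d-4}$ is equivalent to $(m-d-1)^{2d-8} \geq 2^{2d-6}$, which fails for $m$ near $d+2$. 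For instance with $d=5$, $m=8$ your chain gives only $\binom{\lambda_2}{2}\geq 1$ while the corollary's right-hand side is $4$ (the statement is true, since the actual $\lambda_2 = 120$; your intermediate bound is just too lossy there). So the unhandled small-$m$ edge cases are not confined to the $d=4$ column, and ``similar direct computations'' conceals a finite but nontrivial amount of casework across several $(m,d)$ pairs that a complete proof would have to present.
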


\begin{proof}
	First, Proposition 2.26 of \cite{Sutherland2021C} yields that an $(m-d-1)^{th}$ polar cone of $\tau_{1,\dotsc,d}$ is of type
	\begin{equation*}
		\lbm d &d-1 &\cdots &2 &1\\ 1 &\binom{m-d}{1} &\cdots &\binom{m-3}{d-2} &\binom{m-2}{d-1} \rem.
	\end{equation*}
	
	\noindent
	Thus, the number of degree $d-1$ hypersurfaces of $V = \lp \tau_{1,\dotsc,d} \rp_1^\Syl$ is $m-d$. Let $\sigma(m,d)$ be as in Definition \ref{def:Optimal Reduction of Tschirnhaus Complete Intersection}. It follows from Lemma \ref{lem:Lower Approximation} that
	\begin{equation*}
		\lambda_1 \geq \lambda_2 \geq \left\lceil 2^{5-2d}(m-d-1)^{2d-4} \right\rceil.
	\end{equation*}
	
	\noindent
	Moreover, for each $j$,
	\begin{equation*}
		\xi(m,d;j) \geq \lambda_1 + \lambda_2 \geq \left\lceil 2^{5-2d}(m-d-1)^{2d-4} \right\rceil + \left\lceil 2^{5-2d}(m-d-1)^{2d-4} \right\rceil \geq \left\lceil 4 \lp \frac{m-d-1}{2} \rp^{2d-4} \right\rceil,
	\end{equation*}
	
	\noindent
	and thus it follows that
	\begin{equation*}
		\Xi(m,d) = \min\lb \xi(m,d;j) \st 0 \leq j \leq \lambda_2-1 \rb \geq \left\lceil 4 \lp \frac{m-d-1}{2} \rp^{2d-4} \right\rceil.
		\qedhere
	\end{equation*}
\end{proof}

While we do not provide a full comparison here, we note that the key obstruction to obtaining further bounds on $\RD(n)$ using the methods of Theorem \ref{thm:Bounds from the Geometric Obliteration Algorithm} is that $\Xi(m,d)$ has a lower bound which grows exponentially in $d$ and that $m-d-1$ grows much more quickly than $d$ (for example, $m-d-1 \geq 19$ for $m \geq 26$).

Having indicated the obstruction to obtaining further upper bounds on $\RD(n)$ using these methods, we now combine Theorems \ref{thm:Sutherland2021C} and \ref{thm:Bounds from the Geometric Obliteration Algorithm} to immediately construct a new bounding function with the same key properties of $G(m)$.

\begin{corollary}\label{cor:The New Bounding Function} \textbf{(The New Bounding Function)}\\
	Let $G':\ZZ_{\geq 2} \ra \ZZ_{\geq 1}$ be the function with
	\begin{equation*}
	G'(m) = \max\lb \Xi(m,5), \frac{(m-1)!}{120}+1 \rb,
	\end{equation*}

\noindent
for $m \in [13,17]$, with
\begin{equation*}
	G'(m) = \max\lb \Xi(m,6), \frac{(m-1)!}{720}+1 \rb,
\end{equation*}

\noindent
for $m \in [22,25]$, and with $G'(m) = G(m)$ for $m \not\in [13,17] \cup [22,25]$. Then, $G'(m)$ has the following properties:
\begin{enumerate}
	\item For each $m \geq 1$ and $n \geq G'(m)$, $\RD(n) \leq n-m$.
	\item For each $d \geq 4$, $G'(2d^2+7d+6) \leq \frac{(2d^2+7d+5)!}{d!}$. In particular, for $d \geq 4$ and $n \geq \frac{(2d^2+7d+5)!}{d!}$,
	\begin{align*}
		\RD(n) \leq n - 2d^2-7d-6.
	\end{align*}
\end{enumerate}
\end{corollary}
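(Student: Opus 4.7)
The proof is essentially a splicing argument that combines the two previously established theorems, so my plan is to handle the two listed properties one at a time, carving $m$ into the three regimes on which $G'(m)$ is defined.

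For property 1, the plan is to fix $m$ and split on whether $m \in [13,17]$, $m \in [22,25]$, or $m \not\in [13,17] \cup [22,25]$. In the last (generic) case, $G'(m) = G(m)$ by definition, so $n \geq G'(m)$ implies $\RD(n) \leq n-m$ immediately from item 1 of Theorem \ref{thm:Sutherland2021C}. For $m \in [13,17]$, $G'(m) = \max\lb \Xi(m,5),\frac{(m-1)!}{120}+1 \rb$, and I would note that this definition was cooked up precisely so that $n \geq G'(m)$ triggers the hypotheses of items 1 and 2 of Theorem \ref{thm:Bounds from the Geometric Obliteration Algorithm}: when $m = 13$ the hypothesis $n \geq 5{,}250{,}198 = \Xi(13,5)$ is met, and when $m \in [14,17]$ the inequality $n \geq \frac{(m-1)!}{120}+1$ rewrites as the strict inequality $n > \frac{(m-1)!}{120}$ required there. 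The case $m \in [22,25]$ is handled identically using items 3 and 4 of Theorem \ref{thm:Bounds from the Geometric Obliteration Algorithm}.

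For property 2, the observation to make is that no value of the form $2d^2+7d+6$ with $d \geq 4$ lies in the exceptional set $[13,17] \cup [22,25]$: the smallest such value is $2(4)^2 + 7(4) + 6 = 66$, and the expression is strictly increasing in $d$. Hence $G'(2d^2+7d+6) = G(2d^2+7d+6)$ for every $d \geq 4$, and the bound $G'(2d^2+7d+6) \leq \frac{(2d^2+7d+5)!}{d!}$ is inherited verbatim from item 2 of Theorem \ref{thm:Sutherland2021C}. The asymptotic consequence about $\RD(n) \leq n - 2d^2 - 7d - 6$ then follows from property 1 applied to $m = 2d^2+7d+6$.

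There is no genuine obstacle here; the only care required is bookkeeping at the boundary of the piecewise definition, namely matching the closed inequality $n \geq G'(m)$ used here against the mixed closed/strict inequalities appearing in the four items of Theorem \ref{thm:Bounds from the Geometric Obliteration Algorithm}. This is entirely routine once one writes out the numerical values of $\Xi(m,5)$ and $\Xi(m,6)$ from the tables in the proof of that theorem and checks that the maximum defining $G'(m)$ reproduces the correct threshold in each row.
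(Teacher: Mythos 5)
Your proposal is correct and takes the same approach the paper intends: the paper presents Corollary \ref{cor:The New Bounding Function} as following ``immediately'' from combining Theorems \ref{thm:Sutherland2021C} and \ref{thm:Bounds from the Geometric Obliteration Algorithm}, and your case split on $m$ together with the observation that $2d^2+7d+6 \geq 66$ for $d \geq 4$ is precisely the bookkeeping needed to make that combination explicit.
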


\subsection{Remaining Questions}\label{subsec:Remaining Questions}

To the best of the authors' knowledge, the bounding function $G'(m)$ of Corollary \ref{cor:The New Bounding Function} exhausts the techniques and methods for determining upper bounds on resolvent degree from the classical literature (including \cite{Bring1786,Chebotarev1954,Hamilton1836,Hilbert1927,Segre1945,Sylvester1887,
SylvesterHammond1887,SylvesterHammond1888,Tschirnhaus1683,Wiman1927}), as well as the modern insights from \cite{Brauer1975,Sutherland2021C,Wolfson2021}.

The bounding functions of Brauer, Hamilton, Sylvester, Wolfson, and the second-named author are constructed by determining points on the Tschirnhaus complete intersections $\tau_{1,\dotsc,m-1}^\circ$ over extensions of bounded resolvent degree. However, there are solutions of the quintic and the sextic which use alternative constructions of Tschirnhaus transformations (see \cite{Klein1884,Klein1905} for the respective original works or \cite{Morrice1956,Sutherland2019} for the respective English translations). We believe it would be insightful to understand whether one can reduce the general question of determining $\RD(n)$ to the more specific question of determining points on the Tschirnhaus complete intersections $\tau_{1,\dotsc,m-1}^\circ$.

\begin{question}\label{question:Optimal Formulas via Tschirnhaus Complete Intersections} \textbf{(Optimal Formulas via Tschirnhaus Complete Intersections)}\\
	For every $n$, let $m_n$ be such that $\RD(n) \leq n-m_n$. Is there a formula in $n-m_n$ variables for the general degree $n$ polynomial obtained by determining a point of $\tau_{1,\dotsc,m_n-1}^\circ$ over an extension $K'/K_n$ of bounded resolvent degree?
\end{question}

For general $m$, the definition of $G'(m)=G(m)$ uses the combinatorial condition of \cite[Theorem 2.1]{DebarreManivel1998} to guarantee the existence of $k$-planes on the $\tau_{1,\dotsc,d}^\circ$ and then uses the dimension of the relevant moduli space \cite[Subsection 3.3]{Sutherland2021C}. Notably, this combinatorial condition is non-constructive and relies only on the type of $\tau_{1,\dotsc,d}^\circ$. One might hope that such formulas could be determined using constructive methods and one approach may be to leverage the specific geometry of the $\tau_{1,\dotsc,d}^\circ$ (e.g., using more information than its type). 

\begin{question}\label{question:RD Bounds via Explicit Constructions of k-Planes}  \textbf{(RD Bounds via Explicit Constructions of $k$-Planes)}\\
	Is there a bounding function $\mfg(m)$ with $\mfg(m) \leq G'(m)$ which arises from an explicit construction of $k$-planes on the $\tau_{1,\dotsc,d}^\circ$? If so, is it possible to determine the bounding function $\mfg(m)$ such that
\begin{equation*}
	\lim\limits_{m \ra \infty} \frac{G(m)}{\mfg(m)} = \lim\limits_{m \ra \infty} \frac{G'(m)}{\mfg(m)} = \infty?
\end{equation*}	
\end{question}

Theorem \ref{thm:Bounds from the Geometric Obliteration Algorithm} was proved using a consequence of the geometric obliteration algorithm, namely that $r(V) \leq g(V)$ for any intersection of hypersurfaces $V$. Further examination of the relationship between $r(V)$ and $g(V)$ is of interest.

\begin{question}\label{question:Minimal Dimension Bound vs. Geometric Dimension Bound} \textbf{(Minimal Dimension Bound vs. Geometric Dimension Bound)}\\
	For which intersections of hypersurfaces $V$ is the inequality $r(V) \leq g(V)$ strict? Are there classical examples of types of intersections of hypersurfaces where the inequality is not strict?
\end{question}

Let us now briefly consider a cubic hypersurface $H = \VV(f) \subseteq \PP_K^r$. When $r=3$ and $H$ is smooth, the Cayley-Salmon theorem yields that $H$ contains exactly 27 lines. The resolvent degree of determining a line on $H$ is at most 3, as was established by Farb and Wolfson \cite[Theorem 8.2]{FarbWolfson2019}. Additionally, that $H$ has exactly 27 lines is consistent with \cite[Theorem 2.1]{DebarreManivel1998}, which states that the Fano variety of lines of a cubic surface in $\PP_K^3$ is non-empty and has dimension 0. In particular, when $r=3$, most points $P \in H(K)$ do not lie on a line of $H$ over an algebraic closure $\overline{K}$. When $r=4$, however, any polar cone $\mcc(V;P)$ has dimension at least one and thus every point $P \in V(H)$ lies on at least one line $\Lambda  = \Lambda(P,Q) \subseteq H$ over an algebraic closure $\overline{K}$. To determine such a point $Q$ directly, we must solve a polynomial of degree $6 = 3! = \deg(\mcc(V;P))$. Hence, we can determine a line through any point $P$ over an extension with $K'/K$ with $\RD(K'/K) \leq \RD(6) \leq 2$. 

Additionally, observe that
\begin{align*}
	g( \mcc(V;P) ) = g(3;1,1,1) = g(2;1,3) = 5.
\end{align*}

\noindent
Thus, when $r \geq 5$, we can determine a point $Q \in \mcc(V;P) \setminus \{P\}$ over an extension determined by solving at most cubic polynomials (i.e., over a solvable extension).

Now, let $V \subseteq \PP_K^r$ be an intersection of hypersurfaces of type $\lbm d &\cdots &1\\ \ell_d &\cdots &\ell_1 \rem$. For each $k \geq 1$, take $s_k(V)$ to be the minimal $s$ such that
\begin{align*}
	(k+1)(s-k) - \SL_{j=1}^d \ell_j \binom{k+j}{j} \geq 0. 
\end{align*}

One implication of Theorem 2.1 of \cite{DebarreManivel1998} is that $V$ contains a $k$-plane for all $r \geq s_k(V)$. We expect $s_k(V)$ to be the minimal ambient dimension required for $V$ to contain a $k$-plane; however, we expect the resolvent degree of determining such a $k$-plane to be large. Conversely, we expect $r\lp \mcc^k(V;P_0,\dotsc,P_{k-1}) \rp+k$, the ambient dimension required to determine a $k$-polar point over an extension $K'/K$ of small resolvent degree $(\RD(K'/K) \leq \RD(d))$, to be large. 

\begin{question}\label{question:Minimizing Ambient Dimension vs. Minimizing RD of Extensions} \textbf{(Minimizing Ambient Dimension vs. Minimizing RD of Extensions)}\\
Let $V$ be an intersection of hypersurfaces. How do $g\lp \mcc^k(V;P_0,\dotsc,P_{k-1}) \rp+k$, $r\lp \mcc^k(V;P_0,\dotsc,P_{k-1}) \rp+k$, and $s_k(V)$ compare?
\end{question}

Finally, we recall that we have worked entirely in characteristic zero (more specifically, over $\CC$). As we discussed in \ref{subsec:Resolvent Degree}, we do not lose any generality from the perspective of resolvent degree, as
\begin{equation*}
	\RD(n) = \RD_\CC\lp S_n \rp \geq \RD_K\lp S_n \rp
\end{equation*}

\noindent
by \cite[Theorem 1.3]{Reichstein2022}, with equality when $K$ has characteristic zero by \cite[Theorem 1.2]{Reichstein2022}. The foundational result for the polar cone framework we use is the technical lemma \cite[Lemma 2.8]{Sutherland2021C}. For those who wish to work in characteristic $p$, one would need to be careful of how the relevant combinatorics, such as \cite[Proposition 2.26]{Sutherland2021C}, change. Additionally, the modern reference for Tschirnhaus transformations \cite{Wolfson2021} works over $\ZZ$. To consider Tschirnhaus transformations in characteristic $p$, one would need to give extra consideration to the Tschirnhaus hypersurfaces of degree $p^k$.

\newpage

\section{Python Implementations of the Obliteration Algorithm and Related Phenomena}

In Subsection \ref{subsec:Appendix A - The Geometric Obliteration Algorithm}, we provide an implementation (Algorithm \ref{alg:The Geometric Obliteration Algorithm}) of the geometric obliteration algorithm in Python. In Subsection \ref{subsec:Appendix B - Lemmata for Appendix C}, we prove several lemmata which make the computations for the proof of Theorem \ref{thm:Bounds from the Geometric Obliteration Algorithm} feasible. Algorithm \ref{alg:The Geometric Obliteration Algorithm with Computational Improvements} in Subsection \ref{subsec:Appendix C - The Geometric Obliteration Algorithm with Computational Improvements} takes the same input and provides the same output as Algorithm \ref{alg:The Geometric Obliteration Algorithm}, but uses the lemmata of Subsection \ref{subsec:Appendix B - Lemmata for Appendix C} to decrease computation time. Finally, Algorithm \ref{alg:The Geometric Obliteration Algorithm Applied to Tschirnhaus Complete Intersections} in Subsection \ref{subsec:Appendix D - The Geometric Obliteration Algorithm Applied to Polar Cones of Tschirnhaus Complete Intersections} computes the information necessary for Theorem \ref{thm:Bounds from the Geometric Obliteration Algorithm}.

\subsection{Appendix A: The Geometric Obliteration Algorithm}\label{subsec:Appendix A - The Geometric Obliteration Algorithm}
\begin{algorithm}

\begin{algorithmname}\label{alg:The Geometric Obliteration Algorithm} \textbf{(The Geometric Obliteration Algorithm)}
\end{algorithmname}

\vspace{-12pt}

\hrulefill

\vspace{-12pt}

\begin{itemize}
	\item Input: An intersection of hypersurfaces $V$ of type $\lbm d &d-1 &\cdots &2 &1\\ \ell_d &\ell_{d-1} &\cdots &\ell_2 &\ell_1 \rem$ with $d \geq 2$, encoded as the list $\text{DegreeList} = [\ell_d, \ell_{d-1}, \dotsc, \ell_2, \ell_1]$.
	\item Output: The geometric dimension bound $g(d;\ell_d,\dotsc,\ell_1)$.  
\end{itemize}

\vspace{-12pt}

\hrulefill

\begin{algorithmic}[1]
	\Statex The function \textproc{ComputePolarCone} inputs a list which contains the type of an intersection of hypersurfaces $W$. It then returns a list which contains the type of a polar cone $\mcc(W;P)$. In particular, recall that for each $d' < d$, each hypersurface $H$ with $\deg(H) > d'$ defining $W$ contributes exactly one new degree $d'$ hypersurface defining $\mcc(W;P)$ and each hypersurface defining $\mcc(W;P)$ arises in this manner.
	
	\Statex
	
	\Function{ComputePolarCone}{List}:
		\State counter = List[0]
		\State ReturnList = [counter]
		\For{index \In \ \Range(1,\Len(List)):}
			\State counter += List[index]
			\State ReturnList.append(counter)
		\EndFor
		\State \Return ReturnList
	\EndFunction

	\Statex
	
	\Statex The function \textproc{ObliterateLargestDegreeHypersurfaces} inputs a list which contains the type of an intersection of hypersurfaces $W$ whose largest degree hypersurface has degree $d \geq 3$. It identifies the number of hypersurfaces of largest degree and proceeds to iteratively remove a hypersurface $H$ of largest degree and compute a polar cone of the remaining intersection of hypersurfaces $W'$ (with an additional hyperplane included).
	
	\Statex
	
	\Statex Note that an additional hyperplane is added each time to avoid repeated polar cone points, i.e. if $P$ was the cone point of the previous polar cone point, we pass to a hyperplane which does not contain $P$ to ensure that the cone point $Q$ of the next polar cone satisfies $Q \not= P$. Also, the polar cone of a hyperplane plane at any point is just the hyperplane itself, so to compute the combinatorics, it suffices to add one after computing the polar cone instead of doing it beforehand.
	
	\Statex
	
	\Statex As taking the polar cone of a hypersurface $H$ introduces only hypersurfaces of strictly smaller degree, this process terminates and \textproc{ObliterateLargestDegreeHypersurfaces} returns a list whose data is the multi-degree of an intersection of hypersurfaces $V'$ whose largest degree hypersurface has degree $d-1$. 
	
\algstore{bkbreak}
\end{algorithmic}
\end{algorithm}
	
\begin{algorithm}
\begin{algorithmic}[1]
\algrestore{bkbreak}
		
	\Function{ObliterateLargestDegreeHypersurfaces}{List}:
		\While{List[0] $>$ 0:}
			\State List[0] -= 1
			\State TempList = \textproc{ComputePolarCone}(List)
			\State List = TempList
			\State List[\Len(List)-1] += 1
		\EndWhile
		\State ReturnList = []
		\For{index \In \ \Range(1,\Len(List):}
			\State ReturnList.append(List[index])
		\EndFor
		\State \Return ReturnList
	\EndFunction
	
	\Statex

	\Statex The function \textproc{ObliterateQuadricsViaLoops} works similarly to \textproc{ObliterateLargestDegreeHypersurfaces}, but the input is the multi-degree of an intersection of hypersurfaces of type $\lbm 2 &1\\ \ell_2 &\ell_1 \rem$ and the loop ends with a single quadric remaining instead of zero quadrics remaining. 
	
	\Statex
	
		\Function{ObliterateQuadricsViaLoops}{List}:
		\While{List[0] $>$ 1:}
			\State List[0] -= 1
			\State TempList = \textproc{ComputePolarCone}(List)
			\State List = TempList
			\State List[\Len(List)-1] += 1
		\EndWhile
		\State \Return [List[0],List[1]]
	\EndFunction
	
	\Statex
	
	\Statex The procedure \textproc{Main} inputs the multi-degree of an intersection of hypersurfaces $V$ as the list DegreeList and proceeds to successively ``obliterate'' the hypersurfaces of largest degree. The final step of the procedure is to return a list of the form $[1,\alpha]$, which is the requisite intersection of a single quadric and $\alpha$ hyperplanes.
	
	\Statex
	
	\Procedure{Main}{DegreeList}:
		\For{index \In \ \Range(1,\Len(DegreeList)-1):}
			\State TempDegreeList = \textproc{ObliterateLargestDegreeHypersurfaces}(DegreeList)
			\State DegreeList = TempDegreeList
		\EndFor
		\State FinalList = \textproc{ObliterateQuadricsViaLoops}(DegreeList)
		\State Sum = FinalList[0] + FinalList[1]
		\State \Return Sum
	\EndProcedure
\end{algorithmic}

\end{algorithm}

\newpage

\subsection{Appendix B: Lemmata for Computational Improvements}\label{subsec:Appendix B - Lemmata for Appendix C}

In this subsection, we give explicit numerics for Proposition \ref{prop:The Obliteration Proposition} when $d = 2,3,4$.

\begin{lemma}\label{lem:Obliterating Quadrics} \textbf{(Obliterating Quadrics)}\\
	Consider an intersection of hypersurfaces $V$ of type $\lbm 2 &1\\ \ell_2 &\ell_1 \rem$. Then,
	\begin{equation*}
		 g(V) = 1 + \ell_1 + \frac{1}{2}(\ell_2-1)(\ell_2+2).
	\end{equation*}
\end{lemma}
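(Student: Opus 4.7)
The plan is to proceed by induction on $\ell_2$, first using the hyperplane identity of Remark \ref{rem:Hyperplane Identities} to linearize the dependence on $\ell_1$. Repeated application of $1 + g(2;\ell_2,\ell_1) = g(2;\ell_2,\ell_1+1)$ gives $g(2;\ell_2,\ell_1) = g(2;\ell_2,0) + \ell_1$, so it suffices to prove $g(2;\ell_2,0) = 1 + \tfrac{1}{2}(\ell_2-1)(\ell_2+2)$.

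The base case $\ell_2 = 1$ is immediate, since Definition \ref{def:Geometric Dimension Bound} gives $g(2;1,0) = 1$ and $\tfrac{1}{2}(0)(3) = 0$. For the inductive step with $\ell_2 \geq 2$, I would apply Lemma \ref{lem:The Reduction Lemma} with $d = 2$: take a reduction $V^{\red}$ of $V$ of type $\lbm 2 & 1 \\ \ell_2 - 1 & 0 \rem$, a point $P \in V^{\red}(K)$, and a hyperplane $H$ with $P \notin H$. Each of the $\ell_2 - 1$ remaining quadrics contributes its tangent hyperplane at $P$ as one new linear form to $\mcc(V^\red;P)$, while degree-$1$ forms contribute only themselves, so $H \cap \mcc(V^\red;P)$ has type $\lbm 2 & 1 \\ \ell_2 - 1 & \ell_2 \rem$. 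Lemma \ref{lem:The Reduction Lemma} together with the linearization above then yields
\begin{align*}
g(V) = g(H \cap \mcc(V^\red; P)) = \ell_2 + g(2;\ell_2-1,0) = \ell_2 + 1 + \tfrac{1}{2}(\ell_2-2)(\ell_2+1),
\end{align*}
and the elementary identity $\ell_2 + \tfrac{1}{2}(\ell_2-2)(\ell_2+1) = \tfrac{1}{2}(\ell_2-1)(\ell_2+2)$ completes the induction.

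An equivalent approach, closer in spirit to Algorithm \ref{alg:The Geometric Obliteration Algorithm}, is to iterate Lemma \ref{lem:The Reduction Lemma} all the way down. Tracking only the pair $(a_k, b_k)$ recording the number of quadrics and linear forms after $k$ reduction steps, one verifies that each step sends $(a_k,b_k) \mapsto (a_k - 1, b_k + a_k)$: removing one quadric, introducing $a_k - 1$ new tangent hyperplanes from the remaining quadrics in the polar cone, and then adjoining one further hyperplane to avoid the current cone point. With $a_0 = \ell_2$ and $b_0 = \ell_1$, the first coordinate is $a_k = \ell_2 - k$ and telescoping gives $b_{\ell_2 - 1} = \ell_1 + \sum_{j=2}^{\ell_2} j = \ell_1 + \tfrac{1}{2}(\ell_2-1)(\ell_2+2)$; terminating when a single quadric remains and invoking the base case $g(2;1,b) = 1+b$ then yields the claim. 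There is no substantive obstacle beyond careful bookkeeping: verifying that polars of linear forms introduce no new hypersurfaces, and that the iteration stops at exactly one remaining quadric so that the final step matches the explicit base value in Definition \ref{def:Geometric Dimension Bound}.
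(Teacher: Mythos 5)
Your proposal is correct and takes essentially the same approach as the paper: the paper iteratively tracks the type of the partial Sylvester reductions $V^{\Syl}(2;j)$ and telescopes the resulting sum, which is exactly your second approach, while your first approach merely reorganizes the same recursion into a clean induction on $\ell_2$ after linearizing in $\ell_1$ via the hyperplane identity. Both your type computations and the arithmetic identity $\ell_2 + \tfrac{1}{2}(\ell_2-2)(\ell_2+1) = \tfrac{1}{2}(\ell_2-1)(\ell_2+2)$ check out.
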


\begin{proof}
First, observe that $V^\Syl(2;1)$ has type
\begin{equation*}
	\lbm 2 &1\\ \ell_2-1 &\ell_1 + \ell_2 \rem,
\end{equation*}
	
\noindent
by Definition \ref{def:Sylvester Reductions}. Similarly, $V^\Syl(2;2)$ has type
\begin{equation*}
\lbm 2 &1\\ \ell_2-2 &\ell_1+\ell_2+\ell_2-1 \rem.
\end{equation*}

\noindent
Proceeding in this manner yields that $V^\Syl(2;\lambda_2-1)$ has type
\begin{align*}
	\lbm 2 &1\\ 1 &\ell_1 + \SL_{j=1}^{\ell_2-1} (\ell_2-j+1) \rem,
\end{align*}
	
\noindent
and we note that
\begin{equation*}
	\SL_{j=1}^{\ell_2-1} (\ell_2-j+1) = \frac{1}{2}\lp \ell_2-1 \rp \lp \ell_2+2 \rp.
\end{equation*}

\noindent
From Lemma \ref{lem:The Reduction Lemma} and Definition \ref{def:Sylvester Reductions}, we see that
\begin{equation*}
	g(V) = g\lp V^\Syl(2;\lambda_2-1) \rp = 1 + \ell_1 + \frac{1}{2}\lp \ell_2-1 \rp \lp \ell_2+2 \rp.
	\qedhere
\end{equation*}

\end{proof}

\begin{lemma}\label{lem:Obliterating Cubics} \textbf{(Obliterating Cubics)}\\
	Consider an intersection of hypersurfaces $V$ of type $\lbm 3 &2 &1\\ \ell_3 &\ell_2 &\ell_1 \rem$. Then, $V_1^\Syl$ is of type $\lbm 2 &1\\ \beta_3 &\alpha_3 \rem$, where
\begin{align*}
	\beta_3 &= \ell_2 + \frac{1}{2}(\ell_3-1)\ell_3,\\
	\alpha_3 &= \ell_1 + \ell_2\ell_3 + \frac{1}{2}\ell_3(\ell_3+1) + \frac{1}{6}\ell_3\lp 2\ell_3^2-3\ell_3+1 \rp.
\end{align*}	
\end{lemma}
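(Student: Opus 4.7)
The plan is to track the type of each partial Sylvester reduction $V^\Syl(3;j)$ as a triple $(a_j, b_j, c_j)$ recording the number of cubics, quadrics, and hyperplanes in $V^\Syl(3;j)$, with $(a_0, b_0, c_0) = (\ell_3, \ell_2, \ell_1)$, and then to solve the resulting recurrence explicitly. Analyzing a single step of the obliteration (remove a cubic, take the polar cone at a point, intersect with a fresh hyperplane), each of the $a_j - 1$ remaining cubics contributes one quadric and one linear polar, each of the $b_j$ quadrics contributes one linear polar, the original polynomials persist, and the hyperplane intersection adds one more linear equation. This gives the recurrence
\begin{align*}
a_{j+1} &= a_j - 1, \\
b_{j+1} &= b_j + (a_j - 1), \\
c_{j+1} &= c_j + b_j + a_j,
\end{align*}
and $V_1^\Syl = V^\Syl(3;\ell_3)$ by Proposition \ref{prop:The Obliteration Proposition}, so the task reduces to evaluating $(a_{\ell_3}, b_{\ell_3}, c_{\ell_3})$.

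The first recurrence gives $a_j = \ell_3 - j$, so $a_{\ell_3} = 0$, consistent with Remark \ref{rem:Geometric Dimension Bound via Obliteration}. Telescoping the second yields $b_j = \ell_2 + j(\ell_3 - 1) - \binom{j}{2}$, and evaluating at $j = \ell_3$ produces $\beta_3 = b_{\ell_3} = \ell_2 + \ell_3(\ell_3 - 1) - \binom{\ell_3}{2} = \ell_2 + \frac{1}{2}\ell_3(\ell_3 - 1)$, matching the stated value.

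For the hyperplane count, I would substitute these closed forms into $c_{\ell_3} = \ell_1 + \sum_{j=0}^{\ell_3 - 1}(a_j + b_j)$ and evaluate using $\sum_{j=0}^{n-1}j = \binom{n}{2}$ and $\sum_{j=0}^{n-1}\binom{j}{2} = \binom{n}{3}$, arriving at $c_{\ell_3} = \ell_1 + \ell_3(\ell_2 + \ell_3) + (\ell_3 - 2)\binom{\ell_3}{2} - \binom{\ell_3}{3}$. The one routine obstacle is verifying that this expression simplifies to the stated $\alpha_3$: after clearing the common denominator of $6$, both $\ell_3^2 + \frac{1}{3}\ell_3(\ell_3 - 1)(\ell_3 - 2)$ and $\frac{1}{2}\ell_3(\ell_3 + 1) + \frac{1}{6}\ell_3(2\ell_3^2 - 3\ell_3 + 1)$ expand to $\frac{1}{6}(2\ell_3^3 + 4\ell_3)$, so the identification holds and no genuine algebraic difficulty arises.
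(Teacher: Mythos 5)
Your proof is correct and follows essentially the same approach as the paper: both set up the recurrence tracking the cubic, quadric, and linear counts across partial Sylvester reductions and then solve in closed form. The only cosmetic difference is that you first obtain closed forms for $a_j$ and $b_j$ and then sum, whereas the paper tracks the type of $V^\Syl(3;j)$ directly and expands nested sums; both routes land on the same algebra.
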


\begin{proof}
	An argument analogous to the proof of Lemma \ref{lem:Obliterating Quadrics} yields that
	\begin{align*}
	\beta_3 &= \ell_2 + \SL_{j=1}^{\ell_3} (\ell_3-j) = \ell_2 + \frac{1}{2}(\ell_3-1)\ell_3.
	\end{align*}
	
	\noindent
	Next, observe that $V^\Syl(3;j)$ has type
	\begin{equation*}
		\lbm 3 &2 &1\\ \ell_3-j &\ell_2 + \SL_{k=1}^j (\ell_3-k) &\lambda_j \rem.
	\end{equation*}	
	
	\noindent
	Consequently,
	\begin{align*}
		\lambda_{j+1} = \lambda_j + \lp \ell_3-j-1 \rp + \lp \ell_2 + \SL_{k=1}^j (\ell_3-k) \rp + 1.
	\end{align*}

	\noindent
	Combined with the initial condition $\lambda_0 = \ell_1$, we obtain that
	\begin{align*}
		\alpha_3 &= \ell_1 + \lp \SL_{j_1=1}^{\ell_3} (\ell_3-j_1+1) \rp + \lp \SL_{j_2=1}^{\ell_3} \ell_2 + \SL_{j_3=2}^{\ell_3} \SL_{j_4=1}^{j_4-1} \ell_3-j_4 \rp,\\
		&= \ell_1 + \frac{1}{2}\ell_3(\ell_3+1) + \lp \ell_2\ell_3 + \SL_{j_3=2}^{\ell_3} \SL_{j_4=1}^{j_3-1} \ell_3-j_4 \rp,\\
		&= \ell_1 + \ell_2\ell_3 + \frac{1}{2}\ell_3(\ell_3+1) + \SL_{j_3=2}^{\ell_3} \SL_{j_4=1}^{j_3-1} (\ell_3-j_2),\\
		&= \ell_1 + \ell_2\ell_3 + \frac{1}{2}\ell_3(\ell_3+1) + \frac{1}{6}\ell_3\lp 2\ell_3^2-3\ell_3+1 \rp.
		\qedhere
	\end{align*}
\end{proof}

\begin{lemma}\label{lem:Obliterating Quartics} \textbf{(Obliterating Quartics)}\\
	Consider an intersection of hypersurfaces $V \subseteq \PP_K^r$ of type $\lbm 4 &3 &2 &1\\ \ell_4 &\ell_3 &\ell_2 &\ell_1 \rem$. Then, $V_1^\Syl$ is of type $\lbm 3 &2 &1\\ \gamma_4 &\beta_4 &\alpha_4 \rem$, where
\begin{align*}
	\gamma_4 &= \ell_3 + \frac{1}{2}(\ell_4-1)\ell_4,\\
	\beta_4 &= \ell_2 + \ell_3\ell_4 + \frac{1}{2}(\ell_4-1)\ell_4 + \frac{1}{6}\ell_4\lp 2\ell_4^2-3\ell_4+1 \rp,\\
	\alpha_4 &= \ell_1 + \ell_4\lp \ell_2+\ell_3+\frac{1}{2}(\ell_4+1) \rp + \ell_4\lp \frac{1}{2}\ell_3(\ell_4+1) + \frac{1}{3}\lp 2\ell_4^2-3\ell_4+1 \rp \rp\\
	& + \frac{1}{24} (\ell_4-2)(\ell_4-1)\ell_4(3\ell_4-1).
\end{align*}
\end{lemma}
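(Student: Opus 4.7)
The plan is to iterate the Reduction Lemma (Lemma \ref{lem:The Reduction Lemma}) $\ell_4$ times, tracking how the multi-degree of the running intersection evolves, in the same spirit as the proofs of Lemma \ref{lem:Obliterating Quadrics} and Lemma \ref{lem:Obliterating Cubics}. Let $(a_j, b_j, c_j, e_j)$ denote the counts of hypersurfaces of degrees $(4,3,2,1)$ in the type of the $j$-th partial Sylvester reduction, where the hyperplane step of the Reduction Lemma is incorporated at each application (as done in the preceding two lemmas' proofs). Starting from $(a_0, b_0, c_0, e_0) = (\ell_4, \ell_3, \ell_2, \ell_1)$, the three-step operation ``decrement the top-degree count, take the polar cone (which on the multi-degree acts as a cumulative sum from the top, as in Algorithm \ref{alg:The Geometric Obliteration Algorithm}), adjoin a hyperplane'' produces the recurrence
\begin{align*}
a_{j+1} &= a_j - 1, & b_{j+1} &= (a_j - 1) + b_j, \\
c_{j+1} &= (a_j - 1) + b_j + c_j, & e_{j+1} &= a_j + b_j + c_j + e_j.
\end{align*}
When $j = \ell_4$ we have $a_{\ell_4} = 0$, and the remaining triple is the type of $V_1^{\Syl}$, giving $\gamma_4 = b_{\ell_4}$, $\beta_4 = c_{\ell_4}$, $\alpha_4 = e_{\ell_4}$.

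Next I would solve the three outputs in order of increasing nesting depth. The count $a_j = \ell_4 - j$ is immediate; telescoping gives $b_j = \ell_3 + \frac{j(2\ell_4 - j - 1)}{2}$, and specializing at $j = \ell_4$ produces $\gamma_4 = \ell_3 + \frac{(\ell_4 - 1)\ell_4}{2}$. For $\beta_4 = c_{\ell_4} = \ell_2 + \sum_{j=0}^{\ell_4 - 1}(a_j + b_j - 1)$, substituting the closed form for $b_j$ separates the sum into the arithmetic series $\sum (a_j - 1) = \frac{(\ell_4-1)\ell_4}{2}$ and the double sum $\sum b_j = \ell_3 \ell_4 + \frac{(\ell_4-1)\ell_4(2\ell_4-1)}{6}$; the latter identity is structurally identical to the double sum in the proof of Lemma \ref{lem:Obliterating Cubics}, with $\ell_4$ playing the role previously played by $\ell_3$. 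Assembling these pieces yields $\beta_4$.

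The bulk of the proof is the evaluation of $\alpha_4 = \ell_1 + \sum_{j=0}^{\ell_4-1}(a_j + b_j + c_j)$, which requires the further closed form
\begin{align*}
c_j = \ell_2 + \frac{j(2\ell_4 - j - 1)}{2} + j \ell_3 + \frac{(j-1)j(3\ell_4 - j - 1)}{6},
\end{align*}
obtained from the double-sum identity $\sum_{i=0}^{j-1} \frac{i(2\ell_4 - i - 1)}{2} = \frac{(j-1)j(3\ell_4 - j - 1)}{6}$. Summing over $j$, each piece reduces to a standard power-sum evaluation, but the hardest step is the triple sum
\begin{align*}
\sum_{j=0}^{\ell_4 - 1} \frac{(j-1)j(3\ell_4 - j - 1)}{6} = \frac{(\ell_4-2)(\ell_4-1)\ell_4(3\ell_4-1)}{24},
\end{align*}
which I would handle by substituting $k = j - 1$ and evaluating $\sum_{k=0}^{\ell_4-2} k(k+1)(3\ell_4 - 2 - k)$: expanding and applying the standard formulas for $\sum k$, $\sum k^2$, and $\sum k^3$ yields a polynomial in $\ell_4$ that factors as $\frac{(\ell_4-2)(\ell_4-1)\ell_4(3\ell_4-1)}{4}$, and dividing by $6$ supplies the top-degree quartic-in-$\ell_4$ cap term of $\alpha_4$.

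The main obstacle will be the remaining algebraic bookkeeping. Combining the various sub-sums produces several terms with the same polynomial shape in $\ell_4$, and these must be collected carefully according to their coefficients in $\ell_1$, $\ell_2$, $\ell_3$. In particular, the two distinct contributions proportional to $\frac{(\ell_4-1)\ell_4(2\ell_4-1)}{6}$ (one from $\sum b_j$ and one inside $\sum c_j$ arising from $\sum_j \frac{j(2\ell_4 - j - 1)}{2}$) combine to give the coefficient $\ell_4 \cdot \frac{1}{3}(2\ell_4^2 - 3\ell_4 + 1)$ in the claimed expression, and the two $\ell_3$-linear contributions (from $\sum b_j$ and from the $j\ell_3$ piece of $\sum c_j$) must likewise be collected into the $\ell_3$-proportional piece of $\alpha_4$.
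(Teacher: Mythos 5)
Your proposal is correct and follows essentially the same route as the paper's own proof: you write down and solve the multi-degree recurrence for the partial Sylvester reductions (decrement the top count, take the polar cone as a cumulative sum from the top, adjoin a hyperplane), then sum; the recurrence, the closed forms for $b_j$ and $c_j$, and the nested-sum identities you invoke, including the triple sum $\sum_{j=0}^{\ell_4-1}\frac{(j-1)j(3\ell_4-j-1)}{6}=\frac{(\ell_4-2)(\ell_4-1)\ell_4(3\ell_4-1)}{24}$, are all correct.

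One caution worth flagging: if you carry the $\ell_3$-collection through to completion, the total $\ell_3$-coefficient you obtain is $\ell_4\ell_3 + \frac{(\ell_4-1)\ell_4}{2}\ell_3 = \ell_4\cdot\frac{\ell_4+1}{2}\,\ell_3$, which does not agree with the stated $\alpha_4$. The statement has a typo inherited from the final line of the paper's computation: the term $\frac{1}{2}\ell_3(\ell_4+1)$ in the second parenthesized group should read $\frac{1}{2}\ell_3(\ell_4-1)$, as on the penultimate line of the paper's own proof. A sanity check at $\ell_4=2$ makes this concrete: the recurrence gives $e_2 = 5 + 3\ell_3 + 2\ell_2 + \ell_1$, whereas the stated formula evaluates to $5 + 5\ell_3 + 2\ell_2 + \ell_1$. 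Your method, followed through, produces the corrected value.
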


\begin{proof}
	The proofs of Lemmata \ref{lem:Obliterating Quadrics} and \ref{lem:Obliterating Cubics} generalize to determine $\gamma_4$ and $\beta_4$ in a straightforward manner. It remains to determine $\alpha_4$. Note that $V^\Syl(4;j)$ has type
\begin{equation*}
	\lbm 4 &3 &2 &1\\ \ell_4-j &\ell_3 + \SL_{k_1=1}^j (\ell_4-k_1) &\ell_2 + \lp \SL_{k_2=1}^j \ell_4 - k_2 \rp + \SL_{k_3=1}^j \lp \ell_3 + \SL_{k_4=1}^{j-1} (\ell_4-k_4) \rp &\lambda_j \rem.
\end{equation*}

\noindent
As a result,
\begin{align*}
	\lambda_{j+1} = \lambda_j + (\ell_4-j-1) + \lp \ell_3 + \SL_{k=1}^j (\ell_4-k) \rp + \lp \ell_2 + \lp \SL_{k_1=1}^j \ell_4 - k_1 \rp + \SL_{k_2=1}^j \lp \ell_3 + \SL_{k_3=1}^{j-1} (\ell_4-k_3) \rp \rp + 1.
\end{align*}
	
	\noindent
	Given the initial condition $\lambda_0 = \ell_1$, it follows that
	\begin{align*}
		\alpha_4 &= \ell_1 + \lp \SL_{j_1=1}^{\ell_4} \ell_4-j_1+1 \rp + \lp \SL_{j_2=1}^{\ell_4} \ell_3 + \SL_{j_3=2}^{\ell_4} \SL_{j_4=1}^{j_3-1} (\ell_4-j_4) \rp\\
		&+ \lp \SL_{j_5=1}^{\ell_4} \ell_2 + \SL_{j_6=2}^{\ell_4} \SL_{j_7=1}^{j_6-1} (\ell_4-j_7) + \SL_{j_8=2}^{\ell_4} \SL_{j_9=1}^{j_8-1} \ell_3 + \SL_{j_{10}=3}^{\ell_4} \SL_{j_{11}=2}^{j_{10}-1} \SL_{j_{12}=1}^{j_{11}-1} (\ell_4-j_{12}) \rp,\\
		\\
		&= \ell_1 + \lp \frac{1}{2}\ell_4(\ell_4+1) \rp + \lp \ell_3\ell_4 + \frac{1}{6}\ell_4\lp 2\ell_4^2-3\ell_4+1 \rp \rp\\
		&+ \lp \ell_2\ell_4 + \frac{1}{6}\ell_4\lp 2\ell_4^2-3\ell_4+1 \rp + \frac{1}{2}(\ell_4-1)\ell_4\ell_3 + \SL_{j_{10}=3}^{\ell_4} \SL_{j_{11}=2}^{j_{10}-1} \SL_{j_{12}=1}^{j_{11}-1} (\ell_4-j_{12}) \rp,\\
		\\
		&= \ell_1 + \ell_4\lp \ell_2+\ell_3+\frac{1}{2}(\ell_4+1) \rp + \ell_4\lp  \frac{1}{2}\ell_3(\ell_4-1) + \frac{1}{3}\lp 2\ell_4^2-3\ell_4+1 \rp \rp + \SL_{j_{10}=3}^{\ell_4} \SL_{j_{11}=2}^{j_{10}-1} \SL_{j_{12}=1}^{j_{11}-1} (\ell_4-j_{12}),\\
		&= \ell_1 + \ell_4\lp \ell_2+\ell_3+\frac{1}{2}(\ell_4+1) \rp + \ell_4\lp \frac{1}{2}\ell_3(\ell_4+1) + \frac{1}{3}\lp 2\ell_4^2-3\ell_4+1 \rp \rp + \frac{1}{24} (\ell_4-2)(\ell_4-1)\ell_4(3\ell_4-1).
		\qedhere
	\end{align*}
\end{proof}

\newpage

\subsection{Appendix C: The Geometric Obliteration Algorithm with Computational Improvements}\label{subsec:Appendix C - The Geometric Obliteration Algorithm with Computational Improvements}

\begin{algorithm}

\begin{algorithmname}\label{alg:The Geometric Obliteration Algorithm with Computational Improvements} \textbf{(The Geometric Obliteration Algorithm with Computational Improvements)}
\end{algorithmname}

\vspace{-12pt}

\hrulefill

\vspace{-12pt}

\begin{itemize}
	\item Input: An intersection of hypersurfaces $V$ of type $\lbm d &d-1 &\cdots &2 &1\\ \ell_d &\ell_{d-1} &\cdots &\ell_2 &\ell_1 \rem$ with $d \geq 2$, encoded as the list $\text{DegreeList} = [\ell_d, \ell_{d-1}, \dotsc, \ell_2, \ell_1]$.
	\item Output: The geometric dimension bound $g(d;\ell_d,\dotsc,\ell_1)$. 
\end{itemize}

\vspace{-12pt}

\hrulefill

\begin{algorithmic}[1]
	
	\Statex We will use the same functions \textproc{ComputePolarCone} and \textproc{ObliterateLargestDegreeHypersurfaces} which were originally defined in Algorithm \ref{alg:The Geometric Obliteration Algorithm}.
	
	\Statex
	
	\Statex We now implement Lemma \ref{lem:Obliterating Quartics} (respectively, Lemmata \ref{lem:Obliterating Cubics} and \ref{lem:Obliterating Quadrics}) via the following three functions.
	
	\Statex
	
	\Function{ObliterateQuartics}{List}:
		\State a = List[0]
		\State b = List[1]
		\State c = List[2]
		\State d = List[3]
		\State gammafour = b + (1/2)*(a-1)*a
		\State betafour = c + a*b + (1/2)*a*(a+1) + (1/6)*(a-1)*a*(2*a-1)
		\State alphafour = d + a*(b+c+(1/2)*(a+1)) + a*((1/2)*b*(a-1)+(1/3)*((2*(a**2))-(3*a)+1))
		\State \hspace{0.7in} + (1/24)*(a-2)*(a-1)*a*(3*a-1)
		\State \Return [gammafour,betafour,alphafour]	
	\EndFunction
	
	\Statex

	\Function{ObliterateCubics}{List}:
		\State a = List[0]
		\State b = List[1]
		\State c = List[2]
		\State betathree = b + (1/2)*(a-1)*a
		\State alphathree = c + a*b + (1/2)*a*(a+1) + (1/6)*a*((2*(a**2))-(3*a)+1)
		\State \Return [betathree,alphathree]
	\EndFunction

	\Statex
			
	\Function{ObliterateQuadrics}{List}:
		\State a = List[0]
		\State b = List[1]
		\State alphatwo = b + (1/2)*a*(a+1)
		\State \Return [1,alphatwo]
	\EndFunction
	
\algstore{bkbreak}
\end{algorithmic}
\end{algorithm}

\begin{algorithm}
\begin{algorithmic}[1]
\algrestore{bkbreak}

	\Statex The \textproc{Main} procedure works very similarly to its counterpart in Algorithm \ref{alg:The Geometric Obliteration Algorithm}, with the only differences being the use of specialized functions to obliterate quartic, cubic, and quadric hypersurfaces.
	
	\Statex
	
	\Procedure{Main}{DegreeList}:
		\If{\Len(DegreeList) == 2:}
			\State FinalDegreeList = \textproc{ObliterateQuadrics}(DegreeList)
			\State Sum = FinalDegreeList[0] = FinalDegreeList[1]
			\State \Return Sum
		\ElsIf{\Len(DegreeList) == 3:}
			\State TempDegreeList = \textproc{ObliterateCubics}(DegreeList)
			\State DegreeList = TempDegreeList
			\State TempDegreeList = \textproc{ObliterateQuadrics}(DegreeList)
			\State FinalDegreeList = TempDegreeList
			\State Sum = FinalDegreeList[0] = FinalDegreeList[1]
			\State \Return Sum
		\ElsIf{\Len(DegreeList == 4:}
			\State TempDegreeList = \textproc{ObliterateQuartics}(DegreeList)
			\State DegreeList = TempDegreeList
			\State TempDegreeList = \textproc{ObliterateCubics}(DegreeList)
			\State DegreeList = TempDegreeList
			\State TempDegreeList = \textproc{ObliterateQuadrics}(DegreeList)
			\State FinalDegreeList = TempDegreeList
			\State Sum = FinalDegreeList[0] = FinalDegreeList[1]
			\State \Return Sum
		\Else:
			\For{index \In \ \Range(1,\Len(DegreeList)-3):}
				\State TempDegreeList = \textproc{ObliterateLargestDegreeHypersurfaces}(DegreeList)
				\State DegreeList = TempDegreeList
			\EndFor
			\State TempDegreeList = \textproc{ObliterateQuartics}(DegreeList)
			\State DegreeList = TempDegreeList
			\State TempDegreeList = \textproc{ObliterateCubics}(DegreeList)
			\State DegreeList = TempDegreeList
			\State TempDegreeList = \textproc{ObliterateQuadrics}(DegreeList)
			\State FinalDegreeList = TempDegreeList
			\State Sum = FinalDegreeList[0] = FinalDegreeList[1]
			\State \Return Sum
		\EndIf
	\EndProcedure
\end{algorithmic}

\end{algorithm}

\newpage

\subsection{Appendix D: The Geometric Obliteration Algorithm for $\mcc^{m-d-1}(\tau_{1,\dotsc,d};P_0,\dotsc,P_{m-d-1})$}\label{subsec:Appendix D - The Geometric Obliteration Algorithm Applied to Polar Cones of Tschirnhaus Complete Intersections}

\begin{algorithm}

\begin{algorithmname}\label{alg:The Geometric Obliteration Algorithm Applied to Tschirnhaus Complete Intersections} \textbf{(The Geometric Obliteration Algorithm for $\mcc^{m-d-1}(\tau_{1,\dotsc,d};P_0,\dotsc,P_{m-d-1})$)}
\end{algorithmname}

\vspace{-12pt}

\hrulefill

\vspace{-12pt}

\begin{itemize}
	\item Imported Packages: scipy.special, math
	\item Input: A positive integer $d$ and and another positive integer $m \geq d+2$.
	\item Output: The optimal reduction bound of $\tau_{1,\dotsc,d}$ for $m$, $\Xi(m,d)$. 
\end{itemize}

\vspace{-12pt}

\hrulefill

\begin{algorithmic}[1]
	\Statex We will use the same functions \textproc{ComputePolarCone} and \textproc{ObliterateLargestDegreeHypersurfaces} which were originally defined in Algorithm \ref{alg:The Geometric Obliteration Algorithm},  as well as the functions \textproc{ObliterateQuartics} and \textproc{ObliterateCubics} which originally defined in Algorithm \ref{alg:The Geometric Obliteration Algorithm with Computational Improvements}.
	
	\Statex
	
	\Statex We first implement a closed form for the type of an $(m-d-1)^{st}$ polar cone of $\tau_{1,\dotsc,d}$, which is Proposition 2.26 of \cite{Sutherland2021C}.
	
	\Statex
	
	\Function{PolarConeOfTschirnhausType}{Type,Level}:
	\State ReturnList = [1]
	\For{counter \In \ \Range(1,Type):}
		\State NewTerm = scipy.special.comb((Level+counter), counter, exact=True)
		\State OutputList.append(NewTerm)
	\EndFor
	\State \Return ReturnList
	\EndFunction
	
\algstore{bkbreak}
\end{algorithmic}
\end{algorithm}

\begin{algorithm}
\begin{algorithmic}[1]
\algrestore{bkbreak}
	
	\Statex This function takes the type of an $(m-d-1)^{st}$ polar cone of $\tau_{1,\dotsc,d}$ as an input and outputs $\Xi(m,d)$.
	
	\Statex
	
	\Function{ObliterateAMinimalNumberOfQuadrics}{List}:
	\State a = List[0]
	\State b = List[1]
	\State Dimension = b + (1/2)*(a**2 + a - 2)	
	\State NumberOfQuadrics = 1	
	\State DimensionList = [Dimension]
	\While{ 2**NumberOfQuadrics $<$  Dimension:}
		\State NumberOfQuadrics += 1
		\State Dimension = NumberOfQuadrics 
		\State           + (1/2)*(a**2 + a - NumberOfQuadrics**2 - NumberOfQuadrics)
		\State DimensionList.append(Dimension)
	\EndWhile
	\State MaxList1 = [2**(NumberOfQuadrics-1)+1, DimensionList[NumberOfQuadrics-2]+m-d+1]
	\State MaxList2 = [2**NumberOfQuadrics+1, DimensionList[NumberOfQuadrics-1]+m-d+1]
	\State Max1 = max(MaxList1[0], MaxList1[1])
	\State Max2 = max(MaxList2[0], MaxList2[1])
	\If{ Max2 $<$ Max1:}
		\If{ MaxList2[1] $<$ MaxList2[0]: }
			\State \Return MaxList2[0]
		\Else:
			\State \Return MaxList2[1]
		\EndIf
	\Else:
		\If{ MaxList1[1] $<$ MaxList1[0]: }
			\State \Return MaxList1[0]
		\Else:
			\State \Return MaxList1[1]
		\EndIf
	\EndIf
	\EndFunction

\algstore{bkbreak}
\end{algorithmic}
\end{algorithm}

\begin{algorithm}
\begin{algorithmic}[1]
\algrestore{bkbreak}

	\Statex The \textproc{Main} procedure functions similarly to its counterpart in Algorithm \ref{alg:The Geometric Obliteration Algorithm with Computational Improvements}. The two differences are that the degree list is computed based on $m$ and $d$ and the use of \textproc{ObliterateAMinimalNumberQuadrics} instead of \textproc{ObliterateQuadrics}.
	
	\Statex
	
	\Procedure{Main}{m,d}:
		\State PolarConeLevel = m-d-1
		\State DegreeList = \textproc{PolarConeOfTschirnhausType}(d,PolarConeLevel)
		\If{\Len(DegreeList) == 2:}
			\State \Return \textproc{ObliterateAMinimalNumberQuadrics}(DegreeList)
		\ElsIf{\Len(DegreeList) == 3:}
			\State TempDegreeList = \textproc{ObliterateCubics}(DegreeList)
			\State DegreeList = TempDegreeList
			\State \Return \textproc{ObliterateAMinimalNumberQuadrics}(DegreeList)
		\ElsIf{\Len(DegreeList == 4:}
			\State TempDegreeList = \textproc{ObliterateQuartics}(DegreeList)
			\State DegreeList = TempDegreeList
			\State TempDegreeList = \textproc{ObliterateCubics}(DegreeList)
			\State DegreeList = TempDegreeList
			\State \Return \textproc{ObliterateAMinimalNumberQuadrics}(DegreeList)
		\Else:
			\For{index \In \ \Range(1,\Len(DegreeList)-3):}
				\State TempDegreeList = \textproc{ObliterateLargestDegreeHypersurfaces}(DegreeList)
				\State DegreeList = TempDegreeList
			\EndFor
			\State TempDegreeList = \textproc{ObliterateQuartics}(DegreeList)
			\State DegreeList = TempDegreeList
			\State TempDegreeList = \textproc{ObliterateCubics}(DegreeList)
			\State DegreeList = TempDegreeList
			\State \Return \textproc{ObliterateAMinimalNumberQuadrics}(DegreeList)
		\EndIf
	\EndProcedure
\end{algorithmic}
\end{algorithm}

\newpage

\vspace{12pt}

\noindent
\emph{Curtis Heberle}\\
\emph{curtis.heberle@tufts.edu}\\

\noindent
Department of Mathematics\\
Tufts University\\
503 Boston Avenue\\
Bromfield-Pearson\\
Medford, MA 02155\\

\noindent
\emph{Alexander J. Sutherland (corresponding author)}\\
\emph{asuther1@uci.edu}\\

\noindent
340 Rowland Hall\\
Department of Mathematics\\
University of California, Irvine\\
Irvine, CA 92697\\

\noindent
\textbf{Mathematics Subject Classification:} 14G25 (Primary); 12E12, 13F20 (Secondary)\\

\noindent
\textbf{Key Words:} Resolvent degree, polynomials, rational points


\end{document}